\newtheorem{defn}{Definition}[section]
\newtheorem{thm}[defn]{Theorem}
\newtheorem{lem}[defn]{Lemma}
\newtheorem{prop}[defn]{Proposition}
\newtheorem{cor}[defn]{Corollary}
\newtheorem{eg}[defn]{Example}
\newtheorem{re}[defn]{Remark}
\newcommand\relphantom[1]{\mathrel{\phantom{#1}}}
\newcommand{\bdefn}{\begin{defn}}
\newcommand{\edefn}{\end{defn}}
\newcommand{\bthm}{\begin{thm}}
\newcommand{\ethm}{\end{thm}}
\newcommand{\blem}{\begin{lem}}
\newcommand{\elem}{\end{lem}}
\newcommand{\bprop}{\begin{prop}}
\newcommand{\eprop}{\end{prop}}
\newcommand{\bcor}{\begin{cor}}
\newcommand{\ecor}{\end{cor}}
\newcommand{\beg}{\begin{eg}}
\newcommand{\eeg}{\end{eg}}
\newcommand{\bre}{\begin{re}}
\newcommand{\ere}{\end{re}}
\newcommand{\bpf}{\begin{proof}}
\newcommand{\epf}{\end{proof}}
\newcommand{\hg}{\rm hg}
\newcommand{\Hom}{\rm Hom}
\newcommand{\Chom}{\rm Chom}
\newcommand{\Z}{{\rm Z}}
\newcommand{\C}{{\rm C}}
\newcommand{\QC}{{\rm QC}}
\newcommand{\GDer}{{\rm GDer}}
\newcommand{\ZDer}{{\rm ZDer}}
\newcommand{\Cend}{{\rm Cend}}
\newcommand{\QDer}{{\rm QDer}}
\newcommand{\Der}{{\rm Der}}
\newcommand{\id}{{\rm id}}
\newcommand{\End}{{\rm End}}
\newcommand{\R}{\mathcal{R}}
\newcommand{\g}{\mathfrak{g}}
\newcommand{\benu}{\begin{enumerate}}
\newcommand{\eenu}{\end{enumerate}}
\newcommand{\bc}{\begin{center}}
\newcommand{\ec}{\end{center}}
\newcommand{\bea}{\begin{eqnarray}}
\newcommand{\eea}{\end{eqnarray}}
\newcommand{\Bea}{\begin{eqnarray*}}
\newcommand{\Eea}{\end{eqnarray*}}
\newcommand{\beq}{\begin{equation}}
\newcommand{\eeq}{\end{equation}}
\newcommand{\Beq}{\begin{equation*}}
\newcommand{\Eeq}{\end{equation*}}
\newcommand{\bspl}{\begin{split}}
\newcommand{\espl}{\end{split}}
\numberwithin{equation}{section}
\begin{document}
\title{{\bf Deformations and generalized derivations of Lie conformal superalgebras}}
\author{ Jun Zhao$^{1}$, Liangyun Chen$^{1*}$, Lamei Yuan$^{2}$
 \date{{\small $^{1}$School of Mathematics and Statistics, Northeast Normal
 University,\\
Changchun 130024, China\\
 $^{2}$Academy of Fundamental and Interdisciplinary Sciences,\\ Harbin Institute of Technology, Harbin 150080, China}}}

\maketitle
\date{}

\begin{abstract}
The purpose of this paper is to extend the cohomology and conformal derivation theories of the classical Lie conformal algebras to Lie conformal superalgebras. Firstly, we construct the semidirect product of a Lie conformal superalgebra and its conformal module, and study derivations of this semidirect product.
Secondly, we develop cohomology theory of Lie conformal superalgebras and discuss some applications to the study of deformations of Lie conformal superalgebras. Finally, we introduce generalized derivations of Lie conformal superalgebras and study their properties.
\bigskip

\noindent {\em Key words:} Lie conformal superalgebras, semidirect product, cohomology, deformations, generalized derivations\\
\noindent {\em Mathematics Subject Classification(2010): 16S70, 17A42, 17B10, 17B56, 17B70}
\end{abstract}
\renewcommand{\thefootnote}{\fnsymbol{footnote}}
\footnote[0]{ Corresponding author(L. Chen): chenly640@nenu.edu.cn.}
\footnote[0]{Supported by NNSF of China (Nos. 11171055, 11471090 and 11301109).}

\section{Introduction}

Lie conformal superalgebras encode the singular part of the operator product expansion of
chiral fields in two-dimensional quantum field theory. On the other hand, they are closely connected
to the notion of a formal distribution Lie superalgebra $(\g,F)$, that is a Lie superalgebra $\g$
spanned by the coefficients of a family $F$ of mutually local formal distributions. Conversely, to a Lie
conformal superalgebra $\R$ one can associate a formal distribution Lie superalgebra $(Lie \R,\R)$
which establishes an equivalence between the category of Lie conformal superalgebras and the
category of equivalence classes of formal distribution Lie superalgebras obtained as quotients of
Lie $\R$ by irregular ideals. See \cite{BKLR,Z,FKR} in details.

The notion of a Lie conformal superalgebra was introduced in \cite{FKR}, which gave derivations and representations of Lie conformal superalgebras. The cohomology theory of Lie conformal algebras was developed in \cite{BKV}. Lately, the generalized derivation theory of Lie conformal algebras were studied in \cite{FHS}, extending the generalized derivation theory of Lie algebras given in \cite{L}. \cite{GT} study conformal derivations of the semidirect product. \cite{LCM} study Hom-Nijienhuis operators and $T$*-extensions of hom-Lie superalgebras. In the present paper, we aim to do same in \cite{GT} for Lie conformal superalgebras, extend the cohomology theory and study the generalized derivations of Lie conformal algebras to the super case.  Furthermore, by a $2$-cocycle Nijienhuis operator
generate a deformation.

The paper is organized as follows. In Section 2, we recall notions of derivations and modules of a Lie conformal superalgebra. Moreover, we construct the semidirect product of a Lie conformal superalgebra and its conformal module, and study derivations of this semidirect product.

In Section 3, we define cohomology and Nijienhuis operators of Lie conformal superalgebras,
and show that the deformation generated by a $2$-cocycle Nijienhuis operator is trivial.

In Section 4, we introduce different kinds of generalized derivations of Lie conformal superalgebras, and study their properties and connections, extending some results obtained in \cite{L}.

Throughout this paper, all vector
spaces, linear maps, and tensor products are over the complex field $\mathbb{C}$. In addition to the standard
notations $\mathbb{Z}$ and $\mathbb{R}$, we use $\mathbb{Z}_+$ to denote the set of nonnegative integers. $\R$ is a $\mathbb{Z}_{2}$-graded $\mathbb{C}[\partial]$-module, $x\in\R$ means $x$ is a homogeneous element, $|a|\in\mathbb{Z}_{2}$ is the degree of $a$.

\section{Conformal derivations of semidirect products of Lie conformal superalgebras and their conformal modules}

First we present the definition of Lie conformal superalgebras given in \cite[Definition 2.1]{BKLR}.
\bdefn \rm\label{def2-1}
A Lie conformal superalgebra $\R$ is a left $\mathbb{Z}_{2}$-graded $\mathbb{C}[\partial]$-module, and for any $n\in\mathbb{Z}_{\geq0}$ there is a
family of $\mathbb{C}$-bilinear  $n$-products from $\R\otimes\R$ to $\R$ satisfying the following conditions:
\begin{enumerate}
\item[$(\rm C0)$]  For any $a,b\in\R$, there is an $N$ such that $a_{(n)}b=0$ for $n\gg N$,
\item[$(\rm C1)$]  For any $a,b\in\R$ and $n\in\mathbb{Z}_{\geq0}$, $(\partial a)_{(n)}b=-n a_{(n)}b$,
\item[$(\rm C2)$]  For any $a,b\in\R$ and $n\in\mathbb{Z}_{\geq0}$, $$a_{(n)}b=-(-1)^{|a||b|}\sum_{j=0}^{\infty}(-1)^{j+n}\frac{1}{j!}\partial^j(b_{(n+j)}a),$$
\item[$(\rm C3)$]  For any $a,b,c\in\R$ and $m,n\in\mathbb{Z}_{\geq0}$,
$$a_{(m)}(b_{(n)}c)=\sum_{j=0}^{m}(_{j}^{m})(a_{(j)}b)_{(m+n-j)}c+(-1)^{|a||b|}b_{(n)}(a_{(m)}c).$$
\end{enumerate}

Note that if we define $\lambda$-bracket $[-_{\lambda}-]$:
\begin{eqnarray}
[a_\lambda b]=\sum_{n=0}^{\infty}\frac{\lambda^n}{n!}a_{(n)}b,\, a,b\in\R.
\end{eqnarray}
That is, $\R$ is a Lie conformal superalgebra if and only if $[-_{\lambda}-]$ satisfies the following axioms:
\begin{eqnarray*}
&&(\rm C1)_{\lambda}\ \ \ \ {\rm  Conformal\ sesquilinearity}:\ \, [(\partial a)_\lambda b]=-\lambda[a_\lambda b];\\
&&(\rm C2)_{\lambda}\ \ \ \ {\rm Skew-symmetry}:     \ \ \ \ \ \ \ \ \ \ \ \, [a_\lambda b]=-(-1)^{|a||b|}[b_{-\partial-\lambda}a];\\
&&(\rm C3)_{\lambda}\ \ \ \ {\rm Jacobi\ identity}:  \ \ \ \ \ \ \ \ \ \ \ \ \ \ \  \, [a_\lambda[b_\mu c]]=[[a_\lambda b]_{\lambda+\mu}c]+(-1)^{|a||b|}[b_\mu[a_\lambda c]].
\end{eqnarray*}
\edefn

\beg\rm Let $\R=\mathbb{C}[\partial]L\oplus\mathbb{C}[\partial]E$ be a free $\mathbb{Z}_{2}$-graded $\mathbb{C}[\partial]$-module. Define
\begin{eqnarray*}
[L_\lambda L]=(\partial+2\lambda)L,\, [L_\lambda E]=(\partial+\frac{3}{2}\lambda)E,
\, [E_\lambda L]=(\frac{1}{2}\partial+\frac{3}{2}\lambda)E,\, [E_\lambda E]=0,
\end{eqnarray*}
where ${\R}_{\bar{0}}=\mathbb{C}[\partial]L$ and ${\R}_{\bar{1}}=\mathbb{C}[\partial]E$. Then $\R$ is a Lie conformal superalgebra.
\eeg

\beg\rm\label{exp2-1}(Neveu-Schwarz Lie conformal superalgebra) Let $\rm{NS}=\mathbb{C}[\partial]L\oplus\mathbb{C}[\partial]G$ be a free $\mathbb{Z}_{2}$-graded $\mathbb{C}[\partial]$-module. Define
\begin{eqnarray*}
[L_\lambda L]=(\partial+2\lambda)L,\, [L_\lambda G]=(\partial+\frac{3}{2}\lambda)G,
\, [G_\lambda L]=(\frac{1}{2}\partial+\frac{3}{2}\lambda)G,\, [G_\lambda G]=L,
\end{eqnarray*}
where $\rm{NS}_{\bar{0}}=\mathbb{C}[\partial]L$ and $\rm{NS}_{\bar{1}}=\mathbb{C}[\partial]G$. Then $\rm{NS}$ is a Lie conformal superalgebra.
We call it Neveu-Schwarz Lie conformal superalgebra.
\eeg

\begin{prop}
Let $\R=\mathbb{C}[\partial]x\oplus\mathbb{C}[\partial]y$ be a Lie conformal superalgebra that is a free $\mathbb{Z}_{2}$-graded $\mathbb{C}[\partial]$-module, where ${\R}_{\bar{0}}=\mathbb{C}[\partial]x$ and ${\R}_{\bar{1}}=\mathbb{C}[\partial]y$. Then $\R$ is isomorphic to one of the following Lie conformal superalgebras:
\begin{enumerate}
\item[$(1)$]  ${\R}_1:[x_\lambda x]=0,\, [x_\lambda y]=0, \, [y_\lambda y]=p(\partial)x, \forall \, p(\partial)\in\mathbb{C}[\partial]$;
\item[$(2)$]  ${\R}_2:[x_\lambda x]=0,\, [y_\lambda y]=0, \, [x_\lambda y]=q(\lambda)y, \forall \, q(\lambda)\in\mathbb{C}[\lambda]$;
\item[$(3)$]  ${\R}_3:[x_\lambda x]=(\partial+2\lambda)x,\, [x_\lambda y]=0, \, [y_\lambda y]=0$;
\item[$(4)$]  ${\R}_4:[x_\lambda x]=(\partial+2\lambda)x,\, [x_\lambda y]=(\partial+\beta\lambda)y, \, [y_\lambda y]=0, \forall \, \beta\in\mathbb{C}$;
\item[$(5)$]  ${\R}_5:[x_\lambda x]=(\partial+2\lambda)x,\, [x_\lambda y]=(\partial+\frac{3}{2}\lambda)y, \, [y_\lambda y]=\alpha x, \forall \, \alpha\in\mathbb{C}$.
\end{enumerate}
\end{prop}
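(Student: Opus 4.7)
The plan is to exploit the $\mathbb{Z}_2$-grading together with the axioms $(\rm C1)_\lambda$--$(\rm C3)_\lambda$ in order to pin down the three brackets
\begin{equation*}
[x_\lambda x] = p(\partial,\lambda)x, \qquad [x_\lambda y] = q(\partial,\lambda)y, \qquad [y_\lambda y] = r(\partial,\lambda)x,
\end{equation*}
where the form of the right-hand sides is forced by parity ($[x_\lambda x]$ and $[y_\lambda y]$ are even, $[x_\lambda y]$ is odd) and by the rank-one structure of each graded piece. Skew-symmetry $(\rm C2)_\lambda$ immediately gives $p(\partial,\lambda) = -p(\partial,-\partial-\lambda)$ and $r(\partial,\lambda) = r(\partial,-\partial-\lambda)$, and determines $[y_\lambda x]$ from $q$. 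The first real step is to classify $[x_\lambda x]$ using the known rank-one result for Lie conformal algebras: up to rescaling $x$, either $p = 0$ or $p(\partial,\lambda) = \partial + 2\lambda$. This splits the argument into two main branches.

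In the branch $[x_\lambda x] = 0$, the Jacobi identity applied to $(x,x,y)$ reduces, after using $(\rm C1)_\lambda$, to the functional equation $q(\partial+\lambda,\mu)q(\partial,\lambda) = q(\partial+\mu,\lambda)q(\partial,\mu)$, and comparing leading terms in $\partial$ forces $q = 0$ or $q = q(\lambda)$ independent of $\partial$. In the first sub-case the remaining Jacobi identities are trivially satisfied, and $r$ can be taken as any $p(\partial)\in\mathbb{C}[\partial]$, giving $\mathcal{R}_1$; in the second sub-case, Jacobi on $(x,y,y)$ combined with $q\neq 0$ forces $r = 0$, giving $\mathcal{R}_2$.

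In the branch $[x_\lambda x] = (\partial+2\lambda)x$, the module $\mathbb{C}[\partial]y$ is a conformal module over the Virasoro conformal algebra, and the standard classification of rank-one Virasoro modules yields $q = 0$ or $q(\partial,\lambda) = \partial + \beta\lambda$ for some $\beta\in\mathbb{C}$. If $q = 0$, Jacobi on $(x,y,y)$ together with the symmetry of $r$ forces $r = 0$, producing $\mathcal{R}_3$. If $q = \partial + \beta\lambda$, one expands
\begin{equation*}
[x_\lambda[y_\mu y]] = [[x_\lambda y]_{\lambda+\mu} y] + [y_\mu[x_\lambda y]]
\end{equation*}
using $[f(\partial)a_\lambda g(\partial)b] = f(-\lambda)g(\partial+\lambda)[a_\lambda b]$; the resulting polynomial identity in $\partial,\lambda,\mu$ is the crux of the proof. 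Matching coefficients (and invoking $r(\partial,\lambda) = r(\partial,-\partial-\lambda)$) yields either $r = 0$ with $\beta$ arbitrary (giving $\mathcal{R}_4$), or $\beta = \tfrac{3}{2}$ and $r$ constant (giving $\mathcal{R}_5$). The Jacobi identity on $(y,y,y)$ is automatic because $r(\partial,\mu)x$ is central in both cases, and a final rescaling of $y$ (and of $\alpha$ or $p(\partial)$) brings each bracket to its stated canonical form.

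The main obstacle is the bookkeeping in the Virasoro branch: the $(x,y,y)$ Jacobi identity produces a polynomial in three variables whose vanishing must be read carefully term by term to extract both the numerical constraint $\beta = 3/2$ and the fact that $r$ cannot depend on $\partial$ or $\lambda$. Once this computation is executed cleanly, each of the five cases is self-contained and the isomorphism with the stated $\mathcal{R}_i$ follows by direct inspection of the generators.
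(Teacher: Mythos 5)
Your proposal follows essentially the same route as the paper: the same general ansatz for the three brackets, the same case split on $[x_\lambda x]$ being zero or Virasoro, and the same Jacobi-identity coefficient comparisons on $(x,x,y)$ and $(x,y,y)$ (the paper simply derives the rank-one Virasoro and rank-one module classifications in place rather than citing them). One small caveat: your claim that the $(y,y,y)$ Jacobi identity is automatic ``because $r(\partial,\mu)x$ is central'' fails for $\mathcal{R}_5$ with $\alpha\neq 0$, where $x$ is not central; the identity does still hold there, but only after a short computation using $[x_\lambda y]=(\partial+\tfrac{3}{2}\lambda)y$.
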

\begin{proof}
Let us set $[x_\lambda x]=a(\partial,\lambda)x$, $[x_\lambda y]=b(\partial,\lambda)y$, and $[y_\lambda y]=d(\partial)x$ (because of $[y_\lambda y]=[y_{-\partial-\lambda}y]$), where $a(\partial,\lambda)=\sum_{i=0}^n a_i(\lambda)\partial^i$, $b(\partial,\lambda)=\sum_{i=0}^m b_i(\lambda)\partial^i$, and $d(\partial,\lambda)=\sum_{i=0}^q d_i\partial^i,$ with $a_n(\lambda)\neq 0, b_m(\lambda)\neq 0$, and $d_q\neq 0.$ Consider the Jacobi identity
$[x_\lambda[x_\mu x]]=[[x_\lambda x]_{\lambda+\mu}x]+[x_\mu[x_\lambda x]],$ this is equivalent to saying that
\begin{eqnarray}
a(\partial+\lambda,\mu)a(\partial,\lambda)x=a(-\lambda-\mu,\lambda)a(\partial,\lambda+\mu)x+a(\partial+\mu,\lambda)a(\partial,\mu)x.\label{prop2-41}
\end{eqnarray}
Equating terms of degree $2n-1$ in $\partial$ in both sides, we get, if $n>1: n(\lambda-\mu)a_n(\lambda)a_n(\mu)=0$, which shows that $a_n(\lambda)=0$.
So $a(\partial,\lambda)=a_0(\lambda)+a_1(\lambda)\partial$. If we put $\lambda=\mu$ in Eq.(\ref{prop2-41}), we get $a(\partial,2\lambda)a(-2\lambda,\lambda)=0$, hence $a_0(\lambda)=2\lambda a_1(\lambda)$, therefore $a(\partial,\lambda)=a_1(\lambda)(\partial+2\lambda).$ Plugging this into $[x_\lambda x]=-[x_{-\partial-\lambda}x]$, we get that $a_1(\lambda)$ is a constant $C$. So $[x_\lambda x]=C(\partial+2\lambda)x$.

Case $1$

If $C=0$, Consider the Jacobi identity
$[x_\lambda[x_\mu y]]=[[x_\lambda x]_{\lambda+\mu}y]+[x_\mu[x_\lambda y]],$ this is equivalent to saying that
\begin{eqnarray*}
b(\partial+\lambda,\mu)b(\partial,\lambda)y=b(\partial+\mu,\lambda)b(\partial,\mu)y.
\end{eqnarray*}
Equating terms of degree $2m-1$ in $\partial$ in both sides, we get, if $m\geq1: m(\lambda-\mu)b_m(\lambda)b_m(\mu)=0$, which shows that $b_m(\lambda)=0$. So $b(\partial,\lambda)=b_0(\lambda)$. So $[x_\lambda y]=b_0(\lambda)y$.

$(1)$ If $b_0(\lambda)=0$, then $[x_\lambda y]=[y_\lambda x]=0$. In this situation, $\R$ is isomorphic to ${\R}_1$.

$(2)$ If $b_0(\lambda)\neq0$, consider the Jacobi identity
$[x_\lambda[y_\mu y]]=[[x_\lambda y]_{\lambda+\mu}y]+[y_\mu[x_\lambda y]],$ this is equivalent to saying that
\begin{eqnarray*}
0=b_0(\lambda)d(\partial)x+b_0(\lambda)d(\partial)x.
\end{eqnarray*}
So $d(\partial)=0$, thus $[y_\lambda y]=0$. Then $\R$ is isomorphic to ${\R}_2$.

Case $2$

If $C\neq0$, we can let $C=1$(otherwise, change $x$ by a complex multiple). Then $[x_\lambda x]=(\partial+2\lambda)x$.
Consider the Jacobi identity
$[x_\lambda[x_\mu y]]=[[x_\lambda x]_{\lambda+\mu}y]+[x_\mu[x_\lambda y]],$ this is equivalent to saying that
\begin{eqnarray}
b(\partial+\lambda,\mu)b(\partial,\lambda)y=(\lambda-\mu)b(\partial,\lambda+\mu)y+b(\partial+\mu,\lambda)b(\partial,\mu)y.\label{prop2-42}
\end{eqnarray}
Equating terms of degree $2m-1$ in $\partial$ in both sides, we get, if $m>1: m(\lambda-\mu)b_m(\lambda)b_m(\mu)=0$, which shows that $b_m(\lambda)=0$.
So $b(\partial,\lambda)=b_0(\lambda)+b_1(\lambda)\partial$. Substituting into Eq.(\ref{prop2-42}), we get
\begin{eqnarray}b_1(\lambda)b_1(\mu)=b_1(\lambda+\mu) \label{prop2-43}
\end{eqnarray} and
\begin{eqnarray}\lambda b_0(\lambda)b_1(\mu)=(\lambda-\mu)b_0(\lambda+\mu)+\mu b_1(\lambda)b_0(\mu).\label{prop2-44}\end{eqnarray}
By Eq.(\ref{prop2-43}), we have $b_1(\lambda)=0$ or $1$.

$(1)$ If $b_1(\lambda)=0$, then $(\lambda-\mu)b_0(\lambda+\mu)=0$, so $b_0(\lambda)=0$, that is $[x_\lambda y]=0$. Consider the Jacobi identity
$[x_\lambda[y_\mu y]]=[[x_\lambda y]_{\lambda+\mu}y]+[y_\mu[x_\lambda y]],$ this is equivalent to saying that
$d(\partial+\lambda)(\partial+2\lambda)=0$, i.e. $[y_\lambda y]=0$. Then $\R$ is isomorphic to ${\R}_3$.

$(2)$ If $b_1(\lambda)=1$, Substituting into Eq.(\ref{prop2-44}), we get $\lambda b_0(\lambda)-\mu b_0(\mu)=(\lambda-\mu)b_0(\lambda+\mu)$.
So $b_0(\lambda)=\beta\lambda$, and $[x_\lambda y]=(\partial+\beta\lambda)y$. Consider the Jacobi identity
$[x_\lambda[y_\mu y]]=[[x_\lambda y]_{\lambda+\mu}y]+[y_\mu[x_\lambda y]],$ this is equivalent to saying that
\begin{eqnarray}
(\partial+2\lambda)d(\partial+\lambda)x=(\partial-\lambda+2\beta\lambda)d(\partial)x.\label{prop2-45}
\end{eqnarray}
Equating terms of the most degree in $\lambda$ in both sides, we get $d(\partial)=C_1$.\\
$(i)$ If $C_1=0$, then $[y_\lambda y]=0$. Then $\R$ is isomorphic to ${\R}_4$.\\
$(ii)$ If $C_1\neq0$, Substituting into Eq.(\ref{prop2-45}), we get $\beta=\frac{3}{2}$. Then $\R$ is isomorphic to ${\R}_5$.
\end{proof}

\bdefn\rm
A linear map $\rho:\R\rightarrow\R'$ is a homomorphism of Lie conformal superalgebras if $\rho$ satisfies
$\rho\partial=\partial\rho$ and $\rho([a_{\lambda}b])=[\rho(a)_{\lambda}\rho(b)], \forall a,b \in\R.$
\edefn

\beg\rm\label{exp2-3}\cite{FKR}
Let $\g=\g_{\bar{0}}\oplus\g_{\bar{1}}$ be a complex Lie superalgebra with Lie bracket $[-,-]$.
Let $\rm (Cur\g)_{\theta}:=\mathbb{C}[\partial]\otimes\g_{\theta}$ be the free $\mathbb{C}[\partial]$-module. Then $\rm Cur\g=(Cur\g)_{\bar{0}}\oplus(Cur\g)_{\bar{1}}$ is a Lie conformal superalgebra, called current Lie conformal superalgebra, with
$\lambda$-bracket given by:
\begin{eqnarray}
[(f(\partial)\otimes a)_\lambda(g(\partial)\otimes b)]:=f(-\lambda)g(\partial+\lambda)\otimes[a,b], a,b\in\R.
\end{eqnarray}
\eeg

\bdefn\rm\label{def2-4}\cite{FKR}
Let $M$ and $N$ be $\mathbb{Z}_{2}$-graded $\mathbb{C}[\partial]$-modules. A conformal linear map of degree $\theta$ from $M$ to $N$ is a sequence $f=\{f_{(n)}\}_{n\in\mathbb{Z}_{\geq0}}$ of $f_{(n)}\in\Hom_{\mathbb{C}}(M,N)$ satisfying that
 $$\partial_{N} f_{(n)}-f_{(n)}\partial_{M}=-n f_{(n-1)},\ n\in\mathbb{Z}_{\geq0} \ \ \ {\rm  and} \ \ \
f_{(n)}(M_{\mu})\subseteq N_{\mu+\theta},  \ \mu,\theta\in\mathbb{Z}_{2}.$$
 Set $f_\lambda=\sum_{n=0}^{\infty}\frac{\lambda^n}{n!}f_{(n)}$. Then
 $f=\{f_{(n)}\}_{n\in\mathbb{Z}_{\geq0}}$ is a conformal linear map of degree $\theta$ if and only if
 $$f_{\lambda}\partial_{M}=(\partial_{N}+\lambda) f_{\lambda} \ \ \ {\rm and} \ \ \
f_{\lambda}(M_{\mu})\subseteq N_{\mu+\theta}[\lambda],  \ \mu,\theta\in\mathbb{Z}_{2}.$$
\edefn

\bdefn\rm
An associative conformal superalgebra $\R$ is a left $\mathbb{Z}_{2}$-graded $\mathbb{C}[\partial]$-module endowed with a $\lambda$-product
from $\R\otimes\R$ to $\mathbb{C}[\lambda]\otimes\R$, for any $a,b,c\in\R$,
satisfying the following conditions:
\begin{enumerate}
\item [$(1)$] $(\partial a)_\lambda b=-\lambda a_\lambda b,\ a_\lambda(\partial b)=(\partial+\lambda)(a_\lambda b)$,
\item [$(2)$] $a_\lambda(b_\mu c)=(a_\lambda b)_{\lambda+\mu}c$.
\end{enumerate}
\edefn

Let $\Chom(M,N)_{\theta}$ denote the set of conformal linear maps of degree $\theta$ from $M$ to $N$. Then $\Chom(M,N)=\Chom(M,N)_{\bar{0}}\oplus\Chom(M,N)_{\bar{1}}$ is a $\mathbb{Z}_{2}$-graded $\mathbb{C}[\partial]$-module via:
\begin{eqnarray*}\partial f_{(n)}=-n f_{(n-1)},\ {\rm equivalently},\ \partial f_{\lambda}=-\lambda f_{\lambda}.\end{eqnarray*}
The composition $f_{\lambda}g:L\rightarrow N\otimes\mathbb{C}[\lambda]$ of conformal linear maps $f:M\rightarrow N$ and $g:L\rightarrow M$ is given by
 \begin{eqnarray*}
(f_{\lambda}g)_{\lambda+\mu}=f_{\lambda}g_{\mu},\ \ \ \forall \, f,g\in \Chom(M,N).
 \end{eqnarray*}

If $M$ is a finitely generated $\mathbb{Z}_{2}$-graded $\mathbb{C}[\partial]$-module, then $\Cend(M):=\Chom(M,M)$ is an associative conformal superalgebra with respect to the above composition. Thus, $\Cend(M)$ becomes a Lie conformal superalgebra, denoted as $gc(M)$, with respect to the following $\lambda$-bracket(see \cite[Example1.1]{FKR}):
\begin{eqnarray}\label{def2-5}
[f_{\lambda}g]_{\mu}=f_{\lambda}g_{\mu-\lambda}-(-1)^{|f||g|}g_{\mu-\lambda}f_{\lambda}.
\end{eqnarray}
Hereafter all $\mathbb{Z}_{2}$-graded $\mathbb{C}[\partial]$-modules are supposed to be  finitely generated.

\bdefn\label{def2.6}\rm \cite[Definition 2.2]{BKLR}
A $\mathbb{Z}_{2}$-graded $\mathbb{C}[\partial]$-module $M$ is a conformal module of a Lie conformal superalgebra $\R$ if there is a homomorphism of Lie conformal superalgebras $\rho:\R\rightarrow\Cend(M)$.
\edefn
Furthermore, we call $(\rho,M)$ is a representation of the Lie conformal superalgebra $\R$. If $(\rho,M)$ is a representation of a Lie conformal superalgebra $\R$, it is obvious that we have the following relations;
\begin{eqnarray*}
\rho(a)_{(m)}\rho(b)_{(n)}=\sum_{j=0}^{m}(_{j}^{m})\rho(a_{(j)}b)_{(m+n-j)}+(-1)^{|a||b|}\rho(b)_{(n)}\rho(a)_{(m)},
\rho(\partial(a))_{(n)}=-n\rho(a)_{(n)};
\end{eqnarray*}
equivalently,
\begin{eqnarray*}
\rho(a)_{\lambda}\rho(b)_{\mu}-(-1)^{|a||b|}\rho(b)_{\mu}\rho(a)_{\lambda}=[\rho(a)_{\lambda}\rho(b)]_{\lambda+\mu}
=\rho([a_{\lambda}b])_{\lambda+\mu},
\rho(\partial(a))_{\lambda}=-\lambda\rho(a)_{\lambda}.
\end{eqnarray*}

\beg\rm\label{exp2-7}
Let $\g=\g_{\bar{0}}\oplus\g_{\bar{1}}$ be a finite dimensional complex Lie superalgebra, $\pi:\g\rightarrow \End(M)$ a finite dimensional representation of $\g$. Then the free $\mathbb{Z}_{2}$-graded $\mathbb{C}[\partial]$-module $\mathbb{C}[\partial]\otimes M$ is a conformal module of
$\rm Cur\g$, with module structure $\rho:{\rm Cur\g}\rightarrow \Cend(\mathbb{C}[\partial]\otimes M)$ given by:
\begin{eqnarray*}
\rho(f(\partial)\otimes a)_{\lambda}(g(\partial)\otimes m)=f(-\lambda)g(\partial+\lambda)\otimes\pi(a)(m), \forall\,f(\partial),g(\partial)\in\mathbb{C}[\partial], a\in\g, m\in M.
\end{eqnarray*}
\eeg

\begin{defn}\rm\cite{FKR}
Let $\R$ be a Lie conformal superalgebra. $d\in\Cend(\R)$ is a conformal derivation if for any $a,b\in\R$ it holds that
\begin{eqnarray*}
d_{(m)}(a_{(n)}b)=\sum_{j=0}^{m}(_{j}^{m})(d_{(j)}a)_{(m+n-j)}b+(-1)^{|a||d|}a_{(n)}(d_{(m)}(b));
\end{eqnarray*}
equivalently,
\begin{eqnarray}
d_\lambda([a_\mu b])=[(d_\lambda(a))_{\lambda+\mu}b]+(-1)^{|a||d|}[a_\mu(d_\lambda(b))].\label{defa}
\end{eqnarray}
\end{defn}
For any $r\in\R$, $d^{r}_{\lambda}$ is called an inner conformal derivation of $\R$ if $d^{r}_{\lambda}(r')=[r_{\lambda}r']$, $\forall r'\in\R$.

\beg\rm Let $\R=\mathbb{C}[\partial]L\oplus{\rm Cur\g}$ be a free $\mathbb{Z}_{2}$-graded $\mathbb{C}[\partial]$-module, where ${\rm Cur\g}$ is current Lie conformal superalgebra. Define $d^L:{\rm Cur\g}\rightarrow{\rm Cur\g}$ by ${d}^L_\lambda g=(\partial+\lambda)g$ for every $g\in\g\subset {\rm Cur\g}$. Then $d^L$ is a conformal derivation degree $0$ of ${\rm Cur\g}$. Furthermore, if we define
\begin{eqnarray*}
[L_\lambda L]=(\partial+2\lambda)L,\, [L_\lambda g]={d}^L_\lambda g,
\, [g_\lambda h]=[g,h],
\end{eqnarray*}
where ${\R}_{\bar{0}}=\mathbb{C}[\partial]L\oplus{\rm Cur\g_{\bar{0}}}$ and ${\R}_{\bar{1}}={\rm Cur\g_{\bar{1}}}$. Then $\R$ is a Lie conformal superalgebra.
\eeg

\beg\rm\label{exp2-10}
Every conformal derivation of Neveu-Schwarz Lie conformal superalgebra is an inner conformal derivation.
\begin{proof}
Case $1$

Suppose that $|d|=0$, and $d_\lambda L=p(\partial,\lambda)L$, $d_\lambda G=g(\partial,\lambda)G$, where $p(\partial,\lambda)=\sum_{i=0}^n a_i(\lambda)\partial^i$, $g(\partial,\lambda)=\sum_{i=0}^q d_i(\lambda)\partial^i$. If $d$ is a conformal derivation, it needs to satisfy $d_\lambda[L_\mu L]=[(d_\lambda L)_{\lambda+\mu}L]+[L_\mu(d_\lambda L)]$, this is equivalent to saying that
\begin{eqnarray}
(\partial+\lambda+2\mu)p(\partial,\lambda)L=p(-\lambda-\mu,\lambda)(\partial+2\lambda+2\mu)L+p(\partial+\mu,\lambda)(\partial+2\mu)L.\label{exp2-101}
\end{eqnarray}
Equating terms of degree $n$ in $\partial$ in both sides, we get, if $n>1: (\lambda-n\mu)a_n(\lambda)=0$, which shows that $a_n(\lambda)=0$.
So $p(\partial,\lambda)=a_0(\lambda)+a_1(\lambda)\partial$, substituting into Eq.(\ref{exp2-101}), we get $a_0(\lambda)=2\lambda a_1(\lambda)$, hence $d_\lambda L=a_1(\lambda)(\partial+2\lambda)L$.

Similarly, $d_\lambda[G_\mu G]=[(d_\lambda G)_{\lambda+\mu}G]+[G_\mu(d_\lambda G)]$, it is equivalent to
\begin{eqnarray}
p(\partial,\lambda)L=g(-\lambda-\mu,\lambda)L+g(\partial+\mu,\lambda)L.\label{exp2-102}
\end{eqnarray}
The degree of $\partial$ in $p(\partial,\lambda)$ is at most one, so $g(\partial,\lambda)=d_0(\lambda)+d_1(\lambda)\partial$, substituting into Eq.(\ref{exp2-102}), we get $d_0(\lambda)=\frac{3}{2}\lambda d_1(\lambda)=\frac{3}{2}\lambda a_1(\lambda)$, hence $d_\lambda G=a_1(\lambda)(\partial+\frac{3}{2}\lambda)G$.

So $d_\lambda=a_1(\lambda){\rm ad}L_\lambda$.

Case $2$

Suppose that $|d|=1$, and $d_\lambda L=q(\partial,\lambda)G$, $d_\lambda G=f(\partial,\lambda)L$, where $q(\partial,\lambda)=\sum_{i=0}^m b_i(\lambda)\partial^i$, $f(\partial,\lambda)=\sum_{i=0}^p c_i(\lambda)\partial^i$. If $d$ is a conformal derivation, it needs to satisfy $d_\lambda[L_\mu L]=[(d_\lambda L)_{\lambda+\mu}L]+[L_\mu(d_\lambda L)]$, this is equivalent to saying that
\begin{eqnarray}
(\partial+\lambda+2\mu)q(\partial,\lambda)G=q(-\lambda-\mu,\lambda)(\frac{1}{2}\partial+\frac{3}{2}\lambda
+\frac{3}{2}\mu)G+q(\partial+\mu,\lambda)(\partial+\frac{3}{2}\mu)G.\label{exp2-103}
\end{eqnarray}
Equating terms of degree $m$ in $\partial$ in both sides, we get, if $m>1: (\lambda+\frac{1}{2}\mu-m\mu)b_m(\lambda)=0$, which shows that $b_m(\lambda)=0$.
So $q(\partial,\lambda)=b_0(\lambda)+b_1(\lambda)\partial$, substituting into Eq.(\ref{exp2-103}), we get $b_0(\lambda)=3\lambda b_1(\lambda)$, hence $d_\lambda L=b_1(\lambda)(\partial+3\lambda)G$.

Similarly, $d_\lambda[G_\mu G]=[(d_\lambda G)_{\lambda+\mu}G]-[G_\mu(d_\lambda G)]$, it is equivalent to
\begin{eqnarray}
q(\partial,\lambda)G=f(-\lambda-\mu,\lambda)(\partial+\frac{3}{2}\lambda
+\frac{3}{2}\mu)G-f(\partial+\mu,\lambda)(\frac{1}{2}\partial+\frac{3}{2}\mu)G.\label{exp2-104}
\end{eqnarray}
The degree of $\partial$ in $q(\partial,\lambda)$ is at most one, so $f(\partial,\lambda)=c_0(\lambda)$, substituting into Eq.(\ref{exp2-104}), we get $c_0(\lambda)=2b_1(\lambda)$, hence $d_\lambda G=2b_1(\lambda)L$.

So $d_\lambda=2b_1(\lambda){\rm ad}G_\lambda$. Then every conformal derivation of Neveu-Schwarz Lie conformal superalgebra is an inner conformal derivation.
\end{proof}
\eeg

\begin{defn}\rm \label{def2-9}
Let $\R$ be a Lie conformal superalgebra with $\lambda$-bracket $[-_{\lambda}-]$, $M$ a conformal module of $\R$ and $\rho:\R\rightarrow\mathbb{C}[\lambda]\otimes\Cend(M):r\mapsto\rho(r)_\lambda$ the corresponding representation. A conformal linear map $d\in\Chom(\R,M)$ is a conformal derivation from $\R$ to $M$ if
\begin{eqnarray}
d_\lambda([r_\mu r'])=(-1)^{|r||d|}\rho(r)_\mu(d_\lambda(r'))-(-1)^{(|r|+|d|)|r'|}\rho(r')_{-\partial-\lambda-\mu}(d_\lambda(r)).\label{defb}
\end{eqnarray}
\end{defn}
Note that if $\rho$ is the adjoint representation (that is $\rho(a)_\lambda b=[a_\lambda b]$) of $\R$, then Eq.(\ref{defb}) is Eq.(\ref{defa}). We have the following:
\begin{lem}
Let $\R$ be a Lie conformal superalgebra with $\lambda$-bracket $[-_{\lambda}-]$, $M$ a conformal module of $\R$ and $\rho:\R\rightarrow\mathbb{C}[\lambda]\otimes\Cend(M):r\mapsto\rho(r)_\lambda$ the corresponding representation. For any $m\in M$, define a $\mathbb{C}$-linear map
\begin{eqnarray}
d^m:\R\rightarrow \mathbb{C}[\lambda]\otimes M: d^m_\lambda(r)=-(-1)^{|r||m|}\rho(r)_{-\partial-\lambda}(m), r\in\R.\label{RtoM4}
\end{eqnarray}
Then $d^m$ is a conformal derivation from $\R$ to $M$.
\end{lem}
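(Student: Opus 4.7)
The plan is to verify, in order, that $d^m$ is a conformal linear map of degree $|m|$ from $\R$ to $M$, and then that it satisfies the derivation identity~(\ref{defb}). For the conformal linearity, the degree assertion is immediate since $\rho(r)\in\Cend(M)_{|r|}$ forces $\rho(r)_{-\partial-\lambda}(m)\in M_{|r|+|m|}[\lambda]$, and the $\partial$-compatibility $d^m_\lambda(\partial r)=(\partial+\lambda)d^m_\lambda(r)$ of Definition~\ref{def2-4} follows from the identity $\rho(\partial r)_\nu=-\nu\rho(r)_\nu$ of Definition~\ref{def2.6} upon substituting $\nu=-\partial-\lambda$, the sign $-(-1)^{|r||m|}$ being common to both sides.

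For the derivation identity~(\ref{defb}), I would substitute the definition of $d^m$ into both sides. The left-hand side becomes $-(-1)^{(|r|+|r'|)|m|}\rho([r_\mu r'])_{-\partial-\lambda}(m)$, and the representation axiom $\rho(r)_\mu\rho(r')_\nu-(-1)^{|r||r'|}\rho(r')_\nu\rho(r)_\mu=\rho([r_\mu r'])_{\mu+\nu}$ from Definition~\ref{def2.6}, specialized to $\nu=-\partial-\lambda-\mu$, rewrites this as the sum of $-(-1)^{(|r|+|r'|)|m|}\rho(r)_\mu\rho(r')_{-\partial-\lambda-\mu}(m)$ and $(-1)^{(|r|+|r'|)|m|+|r||r'|}\rho(r')_{-\partial-\lambda-\mu}\rho(r)_\mu(m)$. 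Expanding the right-hand side of~(\ref{defb}) using $|d|=|m|$ yields $-(-1)^{(|r|+|r'|)|m|}\rho(r)_\mu(\rho(r')_{-\partial-\lambda}(m))$ plus $(-1)^{(|r|+|m|)|r'|+|r||m|}\rho(r')_{-\partial-\lambda-\mu}(\rho(r)_{-\partial-\lambda}(m))$.

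Matching the two sides then reduces to the substitution identities $\rho(r)_\mu(\rho(r')_{-\partial-\lambda}(m))=\rho(r)_\mu\rho(r')_{-\partial-\lambda-\mu}(m)$ and $\rho(r')_{-\partial-\lambda-\mu}(\rho(r)_{-\partial-\lambda}(m))=\rho(r')_{-\partial-\lambda-\mu}\rho(r)_\mu(m)$, together with the routine mod-$2$ sign identity $(|r|+|m|)|r'|+|r||m|\equiv(|r|+|r'|)|m|+|r||r'|$. Both substitution identities follow from the conformal-linearity relation $\rho(s)_\alpha\partial_M=(\partial_M+\alpha)\rho(s)_\alpha$ iterated $k$ times, which transports $(-\partial-\lambda)^k$ across $\rho(s)_\alpha$ into $(-\partial-\lambda-\alpha)^k$. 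The main obstacle I anticipate is the second substitution identity: it features two nested $-\partial$-substitutions, and its derivation depends on recognizing that both copies of $\partial$ ultimately act as $\partial_M$ on the same final $M$-valued expression, so that the inner shift cancels the $-\partial$ contribution of the outer one, leaving only the $\mu$-shift that converts $\rho(r)_{-\partial-\lambda}$ into $\rho(r)_\mu$.
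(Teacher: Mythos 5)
Your proposal is correct and follows essentially the same route as the paper: first check conformal linearity of degree $|m|$ via $\rho(\partial r)_\nu=-\nu\rho(r)_\nu$, then reduce the derivation identity to the representation axiom by transporting powers of $-\partial-\lambda$ across $\rho(s)_\alpha$ using $\rho(s)_\alpha\partial_M=(\partial_M+\alpha)\rho(s)_\alpha$ (the paper writes this out as explicit double sums over the $n$-products, whereas you phrase it as substitution identities, but the content is identical). Your sign bookkeeping, including the mod-$2$ identity $(|r|+|m|)|r'|+|r||m|\equiv(|r|+|r'|)|m|+|r||r'|$, matches the paper's.
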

\begin{proof}
For any $r\in\R, m\in M_{\theta},$ according to Definition \ref{def2.6} and Eq.(\ref{RtoM4}), we have
\begin{eqnarray*}
d^m_\lambda(\partial r)=-(-1)^{|r||m|}\rho(\partial r)_{-\partial-\lambda}(m)
=-(-1)^{|r||m|}(\partial+\lambda)\rho(r)_{-\partial-\lambda}(m)=(\partial+\lambda)d^m_\lambda(r),
\end{eqnarray*}
and $d^m(\R_{\eta})\subseteq M_{\eta+\theta}$, which means that $d^m\in \Cend(\R,M)_{\theta}$. For any $r'\in\R,$
\begin{eqnarray}
-d^m_\lambda([r_{\mu}r'])
&=&-\sum_{s\geq0}\frac{\mu^s}{s!}d^m_\lambda(r_{(s)}r')\nonumber\\
&=&(-1)^{(|r|+|r'|)|m|}\sum_{s\geq0}\frac{\mu^s}{s!}\rho(r_{(s)}r')_{-\partial-\lambda}(m)\nonumber\\
&=&(-1)^{(|r|+|r'|)|m|}\sum_{s,t\geq0}\frac{\mu^s}{s!}\frac{(-\partial-\lambda)^t}{t!}\rho(r_{(s)}r')_{(t)}(m).\label{RtoM1}
\end{eqnarray}

Since $\rho(r)\in\Cend(M)_{|r|}$ is a conformal linear map, it follows from
\begin{eqnarray*}
\rho(r)_\mu((-\partial-\lambda)^t m')=(-\partial-\lambda-\mu)^t\rho(r)_\mu(m'), m'\in M,
\end{eqnarray*}
and hence,
\begin{eqnarray}
&&-(-1)^{|r||m|}\rho(r)_\mu(d^m_\lambda(r'))\nonumber\\
&=&(-1)^{|r||m|}(-1)^{|r'||m|}\rho(r)_\mu(\rho(r')_{-\partial-\lambda}(m))\nonumber\\
&=&(-1)^{(|r|+|r'|)|m|}\rho(r)_\mu(\sum_{t\geq0}\frac{(-\partial-\lambda)^t}{t!}\rho(r')_{(t)}(m))\nonumber\\
&=&(-1)^{(|r|+|r'|)|m|}\sum_{s,t\geq0}\frac{\mu^s}{s!}\frac{(-\partial-\lambda-\mu)^t}{t!}\rho(r)_{(s)}(\rho(r')_{(t)}(m)).\label{RtoM2}
\end{eqnarray}

Similarly, $\rho(r')\in\Cend(M)_{|r'|}$ is a conformal linear map, it follows from
\begin{eqnarray*}
\rho(r')_{-\partial-\lambda-\mu}((-\partial-\lambda)^t m')=\mu^t\rho(r')_{-\partial-\lambda-\mu}(m'),
\end{eqnarray*}
and hence,
\begin{eqnarray}
&&-(-1)^{|r'|(|r|+|m|)}\rho(r')_{-\partial-\lambda-\mu}(d^m_\lambda(r))\nonumber\\
&=&(-1)^{|r'|(|r|+|m|)+|m||r|}\rho(r')_{-\partial-\lambda-\mu}(\rho(r)_{-\partial-\lambda}(m))\nonumber\\
&=&(-1)^{|r'|(|r|+|m|)+|m||r|}\sum_{s,t\geq0}\frac{\mu^s}{s!}\frac{(-\partial-\lambda-\mu)^t}{t!}\rho(r')_{(t)}(\rho(r)_{(s)}(m)).\label{RtoM3}
\end{eqnarray}

Since $\rho$ is a representation, by Definition \ref{def2.6}, Eqs.\eqref{RtoM1}, \eqref{RtoM2} and \eqref{RtoM3}, $d^m$ is a conformal derivation from $\R$ to $M$.
\end{proof}

\begin{defn}\rm
Let $\R$ be a Lie conformal superalgebra with $\lambda$-bracket $[-_{\lambda}-]$, $M$ a conformal module of $\R$ and $\rho:\R\rightarrow\mathbb{C}[\lambda]\otimes\Cend(M):r\mapsto\rho(r)_\lambda$ the corresponding representation. $d^m$ given by Eq.\eqref{RtoM4}
is called an inner conformal derivation from $\R$ to $M$.
\end{defn}

\begin{re}\rm
If $\rho$ is the adjoint representation of $\R$, then an inner conformal derivation from $\R$ to $\R$ is nothing rather than an inner conformal derivation of $\R$. In fact, for any $r, r'\in\R$, by Definition \ref{def2-1} $(\rm C2)_{\lambda}$ it follows from
\begin{eqnarray}
d^r_\lambda(r')=-(-1)^{|r||r'|}\rho(r')_{-\partial-\lambda}(r)=-(-1)^{|r||r'|}[r'_{-\partial-\lambda}r]=[r_{\lambda}r'].
\end{eqnarray}
\end{re}

\begin{lem}\label{lemm2-13}
If $f,g\in\Cend(M)$, then for any $m\in M$,
\begin{enumerate}
\item [$(1)$] $f_{\lambda}(g_{-\partial-\mu}m)=(f_{\lambda}g)_{-\partial-\mu}m$,
\item [$(2)$] $f_{-\partial-\lambda}(g_{\mu}m)=(f_{-\partial-\mu}g)_{-\partial-\lambda+\mu}m$,
\item [$(3)$] $f_{-\partial-\lambda}(g_{-\partial-\mu}m)=(f_{-\partial+\mu-\lambda}g)_{-\partial-\mu}m$.
\end{enumerate}
\end{lem}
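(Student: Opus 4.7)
The plan is to verify each of the three identities by direct expansion, using only three tools: (i) the power series $f_\nu = \sum_{n\ge 0}\frac{\nu^n}{n!}f_{(n)}$, (ii) the conformal composition $(f_\lambda g)_{\lambda+\mu} = f_\lambda g_\mu$, which rearranges to $(f_\lambda g)_\nu m = f_\lambda(g_{\nu-\lambda}m)$, and (iii) the sesquilinearity $f_\lambda(\partial y) = (\partial+\lambda) f_\lambda(y)$, iterated to $f_\lambda(p(\partial)y) = p(\partial+\lambda) f_\lambda(y)$ for every polynomial $p$. Combined with the binomial theorem, this is enough.

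For (1), I would expand $g_{-\partial-\mu}m = \sum_n \frac{(-\partial-\mu)^n}{n!} g_{(n)} m$, apply $f_\lambda$ termwise, and invoke iterated sesquilinearity to move the polynomial $(-\partial-\mu)^n$ past $f_\lambda$, producing the clean form
\[
\sum_{n\ge 0} \frac{(-\partial-\mu-\lambda)^n}{n!}\, f_\lambda(g_{(n)}m).
\]
For the right-hand side, the composition formula gives $(f_\lambda g)_\nu m = \sum_j \frac{(\nu-\lambda)^j}{j!} f_\lambda(g_{(j)}m)$; reading off $(f_\lambda g)_{(k)}m$, substituting $\nu = -\partial-\mu$, and applying the binomial theorem to the internal sum $\sum_{k\le j}\binom{j}{k}(-\partial-\mu)^k(-\lambda)^{j-k} = (-\partial-\mu-\lambda)^j$ reduces the right-hand side to the same expression as the left-hand side.

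For (2) and (3), the same template applies: write each side as a double series $\sum_{j,k} c_{j,k}(\lambda,\mu,\partial)\, f_{(j)}g_{(k)}m$, push each $\partial$ through the intervening $f_\nu$ by sesquilinearity, and use the binomial theorem to collapse the resulting double sums. In particular, (3) can be obtained most economically by specialising the already-proved identity (1) along the formal substitution $\lambda \mapsto -\partial-\lambda$: this first yields $f_{-\partial-\lambda}(g_{-\partial-\mu}m) = (f_{-\partial-\lambda}g)_{-\partial-\mu}m$, and the stated form $(f_{-\partial+\mu-\lambda}g)_{-\partial-\mu}m$ on the right-hand side is then recovered by a single further use of sesquilinearity to absorb the shift by $\mu$ produced when the outer $\partial$ is pushed past the inner $f$-subscript.

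The main technical obstacle throughout is notational bookkeeping: the symbol $\partial$ appears in several subscripts and naturally acts at several distinct stages (on the intermediate element $g_{(k)}m$ before $f$ is applied, on $f_\lambda(g_{(k)}m)$ after $f$ is applied, and on the final output of the composition). One has to be disciplined about choosing a common target for each $\partial$ before comparing coefficients. Once that is done, every line of the computation reduces to a straightforward polynomial identity that the binomial theorem settles.
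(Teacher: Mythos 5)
The paper states this lemma without proof, so there is no argument of the authors' to compare against; your verification supplies what the paper omits, and it is correct. Identity (1) is exactly as you compute: sesquilinearity turns $(-\partial-\mu)^k$ into $(-\partial-\lambda-\mu)^k$ on the left, and the binomial collapse $\sum_{k\le j}\binom{j}{k}(-\partial-\mu)^k(-\lambda)^{j-k}=(-\partial-\mu-\lambda)^j$ recovers the same normal form $\sum_{j}\frac{(-\partial-\lambda-\mu)^j}{j!}f_\lambda(g_{(j)}m)$ on the right. The only genuinely delicate point in (2) and (3) is the reading of $\partial$ when it occupies the \emph{left} subscript of a composition: since $\Cend(M)$ carries the $\mathbb{C}[\partial]$-module structure $(\partial h)_\sigma=-\sigma h_\sigma$, that $\partial$ is effectively replaced by $-\sigma$, where $\sigma$ is the outer subscript. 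This is precisely the ``shift by $\mu$'' you absorb at the end of your derivation of (3) --- it converts the naive $(f_{-\partial-\lambda}g)_{-\partial-\mu}m$ obtained from (1) by the substitution $\lambda\mapsto-\partial-\lambda$ into the stated $(f_{-\partial+\mu-\lambda}g)_{-\partial-\mu}m$ --- and it is also what makes (2) essentially immediate, since $(f_{-\partial-\mu}g)_{-\partial-\lambda+\mu}m=f_{(-\partial-\lambda+\mu)-\mu}(g_{\mu}m)=f_{-\partial-\lambda}(g_{\mu}m)$. Because you flag and handle this shift explicitly, there is no gap; I would only suggest recording the rule $(\partial h)_\sigma=-\sigma h_\sigma$ once at the outset, after which the substitutions in (2) and (3) become mechanical rather than ad hoc.
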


\begin{lem}
Let $\R$ be a Lie conformal superalgebra with $\lambda$-bracket $[-_{\lambda}-]$, $M$ a conformal module of $\R$ and $\rho:\R\rightarrow\mathbb{C}[\lambda]\otimes\Cend(M):r\mapsto\rho(r)_\lambda$ the corresponding representation. Define a $\lambda$-bracket $[-_\lambda -]_M$ on $\R \oplus M=(\R \oplus M)_{\bar{0}}\oplus(\R \oplus M)_{\bar{1}}$ by
\begin{eqnarray}\label{lemm2-14}
[(r+m)_\lambda(r'+m')]_M=[r_\lambda r']+\rho(r)_\lambda m'-(-1)^{|r'||m|}\rho(r')_{-\partial-\lambda}m,
\end{eqnarray}
$\forall \, r+m,r'+m'\in\R\oplus M$, where $(\R \oplus M)_{\theta}=\R_\theta \oplus M_\theta, \theta\in\mathbb{Z}_{2}.$
Then $\R\oplus M$ is a Lie conformal superalgebra, called the semidirect product of $\R$ and $M$, and denote by $\R\ltimes M$.
\end{lem}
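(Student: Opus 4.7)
The plan is to verify the three axioms $(\mathrm{C1})_\lambda$, $(\mathrm{C2})_\lambda$, $(\mathrm{C3})_\lambda$ of Definition~\ref{def2-1} for the bracket $[-_\lambda-]_M$ defined by Eq.~\eqref{lemm2-14}. By $\mathbb{C}$-bilinearity it suffices to test each axiom on homogeneous generators of the form $r+m$, and by further bilinearity one may split each identity into sub-cases according to whether each argument lies in $\R$ or in $M$. Whenever all arguments lie in $\R$ the identity reduces to the corresponding axiom of $\R$, so the substantive work is the cases involving at least one module element.

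Sesquilinearity is close to immediate: applying $\partial$ termwise and invoking the conformal linearity of $\rho(r)_\lambda$ (which yields $\rho(\partial r)_\lambda=-\lambda\,\rho(r)_\lambda$ by the representation identities displayed after Definition~\ref{def2.6}), together with the $\mathbb{C}[\partial]$-linearity of the conformal linear map $\rho(r')_{-\partial-\lambda}$ acting on $\partial m$, produces the expected factor of $-\lambda$ on each of the three summands of $[-_\lambda-]_M$. For skew-symmetry I would expand $[(r'+m')_{-\partial-\lambda}(r+m)]_M$ from Eq.~\eqref{lemm2-14} with $\lambda$ replaced by $-\partial-\lambda$; the bracket-in-$\R$ piece is handled by $(\mathrm{C2})_\lambda$ in $\R$, and the two cross terms match, up to the expected sign $(-1)^{(|r|+|m|)(|r'|+|m'|)}$, by a direct comparison of the two $\rho$-expressions in Eq.~\eqref{lemm2-14}.

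The Jacobi identity $[a_\lambda[b_\mu c]_M]_M=[[a_\lambda b]_M{}_{\lambda+\mu}c]_M+(-1)^{|a||b|}[b_\mu[a_\lambda c]_M]_M$ is the main obstacle, since it has to be checked in each of the eight cases indexed by whether $a,b,c$ lie in $\R$ or in $M$. Three classes of cases are essentially free: $(\R,\R,\R)$ is Jacobi in $\R$, while any assignment in which at least two of $a,b,c$ lie in $M$ forces both sides to collapse to $0$ after one expansion via Eq.~\eqref{lemm2-14}, since compositions of $\rho$-actions never leave $M$. The genuinely non-trivial cases are those with exactly one module argument, and each reduces to the representation axiom
\begin{equation*}
\rho(a)_\lambda\rho(b)_\mu-(-1)^{|a||b|}\rho(b)_\mu\rho(a)_\lambda=\rho([a_\lambda b])_{\lambda+\mu}
\end{equation*}
once all terms have been brought into a common normal form.

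The trickiest subcase is $a,c\in\R$, $b\in M$: here the inner bracket $[b_\mu c]_M=-(-1)^{|c||b|}\rho(c)_{-\partial-\mu}b$ produces a $\rho(c)$ evaluated at the shifted variable $-\partial-\mu$, which then has to be commuted past an outer $\rho(a)_\lambda$ on one side and compared with $\rho([a_\lambda c])_{-\partial-\lambda-\mu}b$ on the other. This is precisely what Lemma~\ref{lemm2-13} is designed for: I would apply parts (1)--(3) to rewrite each composition $\rho(a)_\lambda\rho(c)_{-\partial-\mu}$ and $\rho(a)_\lambda\rho(c)_{-\partial-\lambda-\mu}$ in normal form, invoke the representation axiom to collapse the resulting commutator of $\rho$'s, and then match signs, which closes the verification. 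The mirror subcases $a\in M$, $b,c\in\R$ and $b\in\R$, $a,c$ split require exactly the same toolkit, together with one extra use of $(\mathrm{C2})_\lambda$ in $\R$ to align the $-\partial-\ast$ arguments.
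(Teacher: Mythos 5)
Your proposal is correct and follows essentially the same route as the paper: verify $(\mathrm{C1})_\lambda$ and $(\mathrm{C2})_\lambda$ directly, and reduce the Jacobi identity to the representation axiom $\rho(a)_\lambda\rho(b)_\mu-(-1)^{|a||b|}\rho(b)_\mu\rho(a)_\lambda=\rho([a_\lambda b])_{\lambda+\mu}$ evaluated after the substitution $\mu\mapsto-\partial-\lambda-\mu'$ (the paper's Eqs.~\eqref{represen1}--\eqref{represen3}). The only cosmetic difference is that you organize the Jacobi check as an explicit eight-fold case split and invoke Lemma~\ref{lemm2-13} for the normal-form manipulations, whereas the paper expands the three Jacobi terms for general mixed elements $r+m$ and collects terms, which amounts to the same computation by bilinearity.
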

\begin{proof}
$\forall\, r+m,r'+m',r''+m''\in \R\oplus M$, note that $\R \oplus M$ is equipped with a $\mathbb{C}[\partial]$-module structure via
\begin{eqnarray*}
\partial(r+m)=\partial(r)+\partial(m).
\end{eqnarray*}

A direct computation shows that
\begin{eqnarray*}
[\partial(r+m)_\lambda(r'+m')]_M
&=&[(\partial r+\partial m)_\lambda(r'+m')]_M\\
&=&[(\partial r)_\lambda r']+\rho(\partial r)_\lambda(m')-(-1)^{|r'||m|}\rho(r')_{-\partial-\lambda}(\partial m)\\
&=&-\lambda[r_\lambda r']-\lambda \rho(r)_\lambda(m')-(-1)^{|r'||m|}(\partial-\lambda-\partial)\rho(r')_{-\partial-\lambda}(m)\\
&=&-\lambda([r_\lambda r']+\rho(r)_\lambda(m')-(-1)^{|r'||m|}\rho(r')_{-\partial-\lambda}(m))\\
&=&-\lambda[(r+m)_\lambda(r'+m')]_M.
\end{eqnarray*}
Thus $\rm (C1)_\lambda$ holds. $\rm (C2)_\lambda$ follows from
\begin{eqnarray*}
[(r'+m')_{-\partial-\lambda}(r+m)]_M
&=&[r'_{-\partial-\lambda}r]+\rho(r')_{-\partial-\lambda}(m)-(-1)^{|r'||m|}\rho(r)_\lambda(m')\\
&=&-(-1)^{|r'||m|}([r_\lambda r']+\rho(r)_\lambda(m')-(-1)^{|r'||m|}\rho(r')_{-\partial-\lambda}(m))\\
&=&-(-1)^{|r'||m|}[(r+m)_\lambda(r'+m')]_M.
\end{eqnarray*}

To check the Jacobi identity, we compute
\begin{eqnarray}
&&[(r+m)_\lambda[(r'+m')_\mu(r''+m'')]_M]_M\nonumber\\
&=&[(r+m)_\lambda([r'_\mu r'']+\rho(r')_\mu(m'')-(-1)^{|r''||m'|}\rho(r'')_{-\partial-\mu}(m')]_M\nonumber\\
&=&[r_\lambda[r'_\mu r'']]+\rho(r)_\lambda(\rho(r')_\mu(m''))-(-1)^{|r''||m'|}\rho(r)_\lambda(\rho(r'')_{-\partial-\mu}(m'))\nonumber\\
&&-(-1)^{(|r''|+|r'|)|m|}\rho([r'_\mu r''])_{-\partial-\lambda}(m),\label{1}\\[4pt]
&&(-1)^{|r||r'|}[(r'+m')_\mu[(r+m)_\lambda(r''+m'')]_M]_M\nonumber\\
&=&(-1)^{|r||r'|}[r'_\mu[r_\lambda r'']]+(-1)^{|r||r'|}\rho(r')_\mu(\rho(r)_\lambda(m''))\nonumber\\
&&-(-1)^{|r||r'|+|r''||m|}\rho(r')_\mu(\rho(r'')_{-\partial-\lambda}(m))
-(-1)^{|r''||m'|}\rho([r_\lambda r''])_{-\partial-\mu}(m')\label{2}
\end{eqnarray}
and
\begin{eqnarray}
&&[{[(r+m)_\lambda(r'+m')]_M}_{(\lambda+\mu)}(r''+m'')]_M\nonumber\\
&=&[([r_\lambda r']+\rho(r)_\lambda(m')-(-1)^{|r'||m|}\rho(r')_{-\partial-\lambda}(m))_{\lambda+\mu}(r''+m'')]_M\nonumber\\
&=&[[r_\lambda r']_{\lambda+\mu}r'']+\rho([r_\lambda r'])_{\lambda+\mu}(m'')-(-1)^{|r''|(|r|+|m'|)}\rho(r'')_{-\partial-\lambda-\mu}(\rho(r)_\lambda m')\nonumber\\
&&+(-1)^{|r''|(|r'|+|m|)+|r'||m|}\rho(r'')_{-\partial-\lambda-\mu}(\rho(r')_{-\partial-\lambda}m).\label{3}
\end{eqnarray}
By Eqs.\eqref{1}--\eqref{3}, we only need to show that
\begin{eqnarray}
\rho(r)_\lambda (\rho(r')_{-\partial-\mu^{'}}m'')-(-1)^{|r||r'|}\rho(r')_{-\partial-\mu^{'}-\lambda}(\rho(r)_{-\partial-\mu^{'}}m'')
=\rho([r_\lambda r'])_{-\partial-\mu^{'}}(m'').\label{represen3}
\end{eqnarray}

Since $(\rho,M)$ is a representation of $\R$,
\begin{eqnarray}
\rho(r)_\lambda (\rho(r')_\mu m'')-(-1)^{|r||r'|}\rho(r')_\mu (\rho(r)_\lambda m'')=\rho([r_\lambda r'])_{\lambda+\mu}(m'').\label{represen1}
\end{eqnarray}
Replacing $\mu$ by $-\lambda-\mu^{'}-\partial$ in Eq.(\ref{represen1}) and using $(\rm C1)_{\lambda}$, we obtain
\begin{eqnarray}
\rho(r)_\lambda (\rho(r')_{-\partial-\mu'}m'')-(-1)^{|r||r'|}\rho(r')_{-\partial-\mu'-\lambda}(\rho(r)_{\lambda}m'')
=\rho([r_\lambda r'])_{-\partial-\mu'}(m'').\label{represen2}
\end{eqnarray}
By  $(\rm C1)_{\lambda}$ again, Eq.(\ref{represen2}) is equivalent to Eq.\eqref{represen3}. This implies
\begin{eqnarray*}
&&[(r+m)_\lambda[(r'+m')_\mu(r''+m'')]_M]_M\\
&=&(-1)^{|r||r'|}[(r'+m')_\mu[(r+m)_\lambda(r''+m'')]_M]_M+
[{[(r+m)_\lambda(r'+m')]_M}_{(\lambda+\mu)}(r''+m'')]_M.
\end{eqnarray*}
Then $\R\oplus M$ is a Lie conformal superalgebra.
\end{proof}

Let $\g$ be a complex Lie superalgebra and $M$ a finite dimensional $\g$-module. Then we have the Current Lie conformal superalgebra $\rm Cur(\g\ltimes M)$ (see Example \ref{exp2-3}). By Example \ref{exp2-7}, $\mathbb{C}[\partial]\otimes M$ is a conformal module of $\rm Cur\g$. Thus, by the above lemma we have the Lie conformal superalgebra $\rm Cur\g\ltimes(\mathbb{C}[\partial]\otimes M)$.

\begin{cor}
$\rm Cur(\g\ltimes M)$ is a proper Lie conformal subalgebra of $\rm Cur\g\ltimes(\mathbb{C}[\partial]\otimes M)$.
\end{cor}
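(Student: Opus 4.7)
The plan is to exhibit an explicit $\mathbb{C}[\partial]$-linear embedding
$\iota\colon{\rm Cur}(\g\ltimes M)\hookrightarrow {\rm Cur}\g\ltimes(\mathbb{C}[\partial]\otimes M)$
and to verify that it intertwines the two $\lambda$-brackets, so that its image is a Lie conformal subalgebra. As $\mathbb{Z}_{2}$-graded $\mathbb{C}[\partial]$-modules, the current construction gives ${\rm Cur}(\g\ltimes M)=\mathbb{C}[\partial]\otimes(\g\oplus M)=(\mathbb{C}[\partial]\otimes\g)\oplus(\mathbb{C}[\partial]\otimes M)$, which matches the underlying module of ${\rm Cur}\g\ltimes(\mathbb{C}[\partial]\otimes M)$. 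I would therefore take $\iota$ to be the natural identification $f(\partial)\otimes(a+m)\mapsto(f(\partial)\otimes a)+(f(\partial)\otimes m)$ for $a\in\g$, $m\in M$, $f(\partial)\in\mathbb{C}[\partial]$; it is clearly $\mathbb{C}[\partial]$-linear and injective.

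Next, I would verify that $\iota$ preserves the $\lambda$-bracket by a bilinear case analysis on generators drawn from $\g$ and $M$. In the $\g$--$\g$ case both sides reduce to the current-bracket formula of Example \ref{exp2-3} and yield $f(-\lambda)g(\partial+\lambda)\otimes[a,a']_{\g}$. In the $\g$--$M$ case, the left-hand side applies the current formula with the Lie superbracket $[a,m]_{\g\ltimes M}=\pi(a)(m)$, producing $f(-\lambda)g(\partial+\lambda)\otimes\pi(a)(m)$, while the right-hand side evaluates $\rho(f(\partial)\otimes a)_{\lambda}(g(\partial)\otimes m)$ via the recipe of Example \ref{exp2-7} to the same expression. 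The $M$--$\g$ case can be handled either directly or deduced from the previous one via the ${\rm (C2)}_{\lambda}$-skew-symmetry in ${\rm Cur}(\g\ltimes M)$ together with the skew-symmetry already built into Lemma \ref{lemm2-14}. In the $M$--$M$ case both brackets vanish: on the left because $M$ is abelian in $\g\ltimes M$, and on the right because Lemma \ref{lemm2-14}'s formula has no contribution when both ${\rm Cur}\g$-components are zero.

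The most delicate step is the $M$--$\g$ check, where the semidirect-product formula produces $-(-1)^{|a||m|}\rho(g(\partial)\otimes a)_{-\partial-\lambda}(f(\partial)\otimes m)$, and one must expand $\rho$ at the shifted parameter $-\partial-\lambda$, interpreting the $\partial$ as the derivation acting from the left on the coefficients. Using $\rho(g(\partial)\otimes a)_{\mu}(f(\partial)\otimes m)=g(-\mu)f(\partial+\mu)\otimes\pi(a)(m)$ with $\mu=-\partial-\lambda$ rearranges the polynomial coefficients to $g(\partial+\lambda)f(-\lambda)\otimes\pi(a)(m)$, which matches the current-bracket output $f(-\lambda)g(\partial+\lambda)\otimes[m,a]_{\g\ltimes M}=-(-1)^{|a||m|}f(-\lambda)g(\partial+\lambda)\otimes\pi(a)(m)$ after accounting for the sign from the superskew-symmetry of $[\,\cdot\,,\,\cdot\,]_{\g\ltimes M}$. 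Once all four cases are settled, $\iota$ is an injective homomorphism of Lie conformal superalgebras whose image is closed under the ambient $\lambda$-bracket, realizing ${\rm Cur}(\g\ltimes M)$ as a Lie conformal subalgebra of ${\rm Cur}\g\ltimes(\mathbb{C}[\partial]\otimes M)$, as claimed.
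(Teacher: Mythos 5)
Your proposal is correct and follows essentially the same route as the paper: identify the underlying $\mathbb{C}[\partial]$-modules via the natural map $f(\partial)\otimes(a+m)\mapsto f(\partial)\otimes a+f(\partial)\otimes m$ and check that the current bracket on ${\rm Cur}(\g\ltimes M)$ agrees with the semidirect-product bracket, the key point in both arguments being the evaluation $\rho(g(\partial)\otimes a)_{-\partial-\lambda}(f(\partial)\otimes m)=g(\partial+\lambda)f(-\lambda)\otimes\pi(a)(m)$. The only difference is organizational (your four bilinear cases versus the paper's single computation on general elements $r+m$, $r'+m'$), which does not change the substance.
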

\begin{proof}
Let $\rho:\g\rightarrow \mathbb{C}[\lambda] \otimes\rm{End_{\mathbb{C}}(M)}$ be the corresponding representation. For any $f(\partial)\otimes(r+m), g(\partial)\otimes(r'+m')\in\rm{Cur(\g\ltimes M)}$, where $r,r'\in\g, m,m'\in M$. By the $\lambda$-bracket on $\rm Cur(\g\ltimes M)$(Example \ref{exp2-3}) it follows from
\begin{eqnarray}
&&[(f(\partial)\otimes(r+m))_\lambda(g(\partial)\otimes(r'+m'))]\nonumber\\
&=&f(-\lambda)g(\partial+\lambda)\otimes[r+m,r'+m']\nonumber\\
&=&f(-\lambda)g(\partial+\lambda)\otimes([r,r']+\rho(r)(m')-(-1)^{|r'||m|}\rho(r')(m)).\label{mod1}
\end{eqnarray}

By the module structure on $\mathbb{C}[\partial]\otimes M$(see Example \ref{exp2-7}) it follows from
\begin{eqnarray*}
\rho(g(\partial)\otimes r')_{-\partial-\lambda}(f(\partial)\otimes m)
=g(\partial+\lambda)f(-\lambda)\otimes\rho(r')(m)
=f(-\lambda)g(\partial+\lambda)\otimes\rho(r')(m).
\end{eqnarray*}
So, by the $\lambda$-bracket on $\rm Cur\g\ltimes(\mathbb{C}[\partial]\otimes M)$, we have
\begin{eqnarray}
&&[(f(\partial)\otimes(r+m))_\lambda(g(\partial)\otimes(r'+m'))]\nonumber\\
&=&[(f(\partial)\otimes r+f(\partial)\otimes m))_\lambda(g(\partial)\otimes r'+g(\partial)\otimes m')]\nonumber\\
&=&[(f(\partial)\otimes r)_\lambda(g(\partial)\otimes r')]+\rho(f(\partial)\otimes r)_\lambda(g(\partial)\otimes m'))\nonumber\\
&&-(-1)^{|r'||m|}\rho(g(\partial)\otimes r')_{-\partial-\lambda}(f(\partial)\otimes m))\nonumber\\
&=&f(-\lambda)g(\partial+\lambda)\otimes[r,r']+f(-\lambda)g(\partial+\lambda)\otimes\rho(r)(m')\nonumber\\
&&-(-1)^{|r'||m|}f(-\lambda)g(\partial+\lambda)\otimes\rho(r')(m)\nonumber\\
&=&f(-\lambda)g(\partial+\lambda)\otimes([r,r']+\rho(r)(m')-(-1)^{|r'||m|}\rho(r')(m)).\label{mod2}
\end{eqnarray}
By Eqs.\eqref{mod1} and \eqref{mod2}, the $\lambda$-bracket on $\rm Cur(\g\ltimes M)$ is induced by that on $\rm Cur\g\ltimes(\mathbb{C}[\partial]\otimes M)$.
\end{proof}

\begin{lem}\label{lemm2-16}
Let $\R$ be a Lie conformal superalgebra with $\lambda$-bracket $[-_\lambda-]$, $M$ a $\mathbb{C}[\partial]$-module, $\rho\in\Chom(\R,\Cend(M))$, and $f\in\Chom(M,\R)$. Then for any $r\in\R$ and $m\in M$, the following two equations are equivalent:
\begin{enumerate}
\item [$(1)$]  $f_\lambda(\rho(r)_\mu(m))=(-1)^{|r||f|}[r_\mu(f_\lambda(m))]$.
\item [$(2)$]  $f_\lambda(\rho(r)_{-\partial-\mu}(m))=-(-1)^{|r||m|}[(f_\lambda(m))_{\lambda+\mu}r]$.
\end{enumerate}
\end{lem}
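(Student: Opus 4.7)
The approach is to use two ingredients: (a) the conformal sesquilinearity $f_\lambda\partial_M = (\partial_\R+\lambda)f_\lambda$ from Definition \ref{def2-4}, and (b) the skew-symmetry $(\mathrm{C2})_\lambda$ of the $\lambda$-bracket on $\R$. I would prove $(1)\Rightarrow(2)$ by an explicit transformation, and the converse by retracing the same steps. Note that the representation/module hypothesis is not needed here, since $\rho$ enters the statement only as a conformal linear map.

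First, iterating $f_\lambda\partial_M = (\partial_\R+\lambda)f_\lambda$ gives the operator identity $f_\lambda\circ(-\partial_M-\mu)^n = (-\partial_\R-\lambda-\mu)^n\circ f_\lambda$. Expanding $\rho(r)_{-\partial-\mu}(m) = \sum_{n\ge 0}(-\partial_M-\mu)^n\rho(r)_{(n)}(m)/n!$ and applying $f_\lambda$ yields
\[
  f_\lambda(\rho(r)_{-\partial-\mu}(m)) = \sum_{n\ge 0}\frac{(-\partial_\R-\lambda-\mu)^n}{n!}\,f_\lambda(\rho(r)_{(n)}(m)).
\]
Extracting the coefficient of $\mu^n/n!$ shows that $(1)$ is equivalent to the family of identities $f_\lambda(\rho(r)_{(n)}(m))=(-1)^{|r||f|}r_{(n)}(f_\lambda(m))$ for every $n\ge 0$; substituting these into the display gives
\[
  f_\lambda(\rho(r)_{-\partial-\mu}(m)) = (-1)^{|r||f|}[r_{-\partial-\lambda-\mu}(f_\lambda(m))],
\]
where the $\partial$ inside the bracket is $\partial_\R$ acting on $r_{(n)}(f_\lambda m)\in\R$.

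The last step is to convert this bracket to the form appearing in $(2)$. Applying the coefficient version of $(\mathrm{C2})_\lambda$, namely $r_{(n)}(X) = -(-1)^{|r||X|}\sum_{j\ge 0}(-1)^{j+n}\partial^j(X_{(n+j)}r)/j!$ with $X = f_\lambda(m)$, and then using the elementary identity $\sum_{n+j=k}\binom{k}{n}(-\partial-\lambda-\mu)^n\partial^j = (-\lambda-\mu)^k$ to collapse the resulting double sum grade by grade, one finds $[r_{-\partial-\lambda-\mu}(f_\lambda(m))] = -(-1)^{|r|(|f|+|m|)}[(f_\lambda(m))_{\lambda+\mu}r]$. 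Combining the signs $(-1)^{|r||f|+|r|(|f|+|m|)}=(-1)^{|r||m|}$ delivers $(2)$. The implication $(2)\Rightarrow(1)$ follows by applying the same three moves in reverse order.

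The main obstacle is the careful bookkeeping of two distinct translation operators ($\partial_M$ on the $M$-side, $\partial_\R$ on the $\R$-side), together with the $\partial$ already introduced by skew-symmetry inside the bracket; the formal substitutions $\mu\mapsto -\partial-\mu$ and $\eta\mapsto-\partial-\lambda-\mu$ interact nontrivially with each of these. Working throughout with the $n$-product coefficient form of each identity, rather than at the level of $\lambda$-brackets, sidesteps these interpretive ambiguities and reduces the proof to the single binomial identity above.
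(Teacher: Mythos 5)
Your proposal is correct and follows essentially the same route as the paper: the substitution $\mu\mapsto-\partial-\lambda-\mu'$ combined with the sesquilinearity of $f$ and the skew-symmetry $(\mathrm{C2})_\lambda$, with the same sign computation $(-1)^{|r||f|+|r|(|f|+|m|)}=(-1)^{|r||m|}$. The paper states this in one line; you have merely unpacked it at the level of $n$-product coefficients, which makes the action of the two translation operators explicit but adds no new idea.
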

\begin{proof} If
$ f_\lambda(\rho(r)_\mu(m))=(-1)^{|r||f|}[r_\mu(f_\lambda(m))],$
replacing $\mu$ by $-\lambda-\mu^{'}-\partial$ and using $(\rm C1)_{\lambda}$, we obtain
\begin{eqnarray*}
f_\lambda(\rho(r)_{-\partial-\mu'}(m))=-(-1)^{|r||m|}[(f_\lambda(m))_{\lambda+\mu'}r],
\end{eqnarray*}
that is $(2)$ holds. The reverse conclusion follows similarly.
\end{proof}

\begin{lem}\label{lemm2-17}
Let $\R$ be a Lie conformal superalgebra with $\lambda$-bracket $[-_\lambda-]$, $M$ a $\mathbb{C}[\partial]$-module, $\rho\in\Chom(\R,\Cend(M))$, $f\in\Cend(M)_\theta$ and $g\in\Cend(\R)_\theta$. Then for any $r\in\R$ and $m\in M$, the following two equations are equivalent:
\begin{enumerate}
\item [$(1)$]  $f_\lambda(\rho(r)_\mu(m))=\rho(g_\lambda(r))_{\lambda+\mu}(m)+(-1)^{\theta|r|}\rho(r)_\mu(f_\lambda(m))$.
\item [$(2)$]  $f_\lambda(\rho(r)_{-\partial-\mu}(m))=\rho(g_\lambda(r))_{-\partial-\mu}(m)
    +(-1)^{\theta|r|}\rho(r)_{-\partial-\lambda-\mu}(f_\lambda(m))$.
\end{enumerate}
\end{lem}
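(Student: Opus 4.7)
The plan is to mimic the proof of Lemma~\ref{lemm2-16}: both (1) and (2) are polynomial identities in the formal variable $\mu$, and the passage between them is effected by the substitution $\mu\mapsto -\partial-\lambda-\mu'$ combined with the sesquilinearity $f_{\lambda}\partial_{M}=(\partial_{M}+\lambda)f_{\lambda}$ satisfied by any conformal linear map $f\in\Cend(M)_{\theta}$.

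Assuming (1), I would perform the formal substitution $\mu=-\partial-\lambda-\mu'$, with $\partial$ understood, as in the convention used in $(\rm C2)_{\lambda}$, to act from the left on the output of each $n$-product. Two of the three terms are handled by direct substitution: the two $\lambda$'s cancel, turning $\rho(g_{\lambda}(r))_{\lambda+\mu}(m)$ into $\rho(g_{\lambda}(r))_{-\partial-\mu'}(m)$, and $(-1)^{\theta|r|}\rho(r)_{\mu}(f_{\lambda}(m))$ into $(-1)^{\theta|r|}\rho(r)_{-\partial-\lambda-\mu'}(f_{\lambda}(m))$. For the left-hand side, the substitution produces $\sum_{n}\tfrac{(-\partial-\lambda-\mu')^{n}}{n!}f_{\lambda}(\rho(r)_{(n)}(m))$; applying sesquilinearity of $f_{\lambda}$ absorbs the shift $(\partial+\lambda)$ back inside $f_{\lambda}$, converting the expression into $f_{\lambda}\bigl(\sum_{n}\tfrac{(-\partial-\mu')^{n}}{n!}\rho(r)_{(n)}(m)\bigr)=f_{\lambda}(\rho(r)_{-\partial-\mu'}(m))$. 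Renaming $\mu'$ back to $\mu$ yields (2).

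The map $\mu\mapsto-\partial-\lambda-\mu$ is a formal involution (up to the sesquilinearity adjustment), so the converse direction $(2)\Rightarrow(1)$ follows by the same argument read backwards. I expect the main obstacle to be purely bookkeeping: one must keep straight which copy of $\partial$ acts on which element so that the convention $\rho(r)_{-\partial-\mu}(m)=\sum_{n}\tfrac{(-\partial-\mu)^{n}}{n!}\rho(r)_{(n)}(m)$ is preserved and the sesquilinearity rule is applied in the correct direction. No deeper ingredient is needed beyond $(\rm C1)_{\lambda}$ and the defining identity of a conformal linear map; in particular, neither the Jacobi identity nor the module axioms for $(\rho,M)$ enter the argument.
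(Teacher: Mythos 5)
Your proposal is correct and matches the paper's approach: the paper proves this lemma by declaring it ``similar to Lemma \ref{lemm2-16}'', whose proof is precisely the substitution $\mu\mapsto-\partial-\lambda-\mu'$ combined with conformal sesquilinearity, which is exactly what you carry out (with more care about how $f_\lambda$ absorbs the shift $\partial+\lambda$ on the left-hand side). Your closing observation that only $(\rm C1)_\lambda$ and the conformal-linearity axiom are needed is also consistent with the hypotheses of the lemma, which do not assume $(\rho,M)$ is a representation.
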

\begin{proof}
Its proof is similar to Lemma \ref{lemm2-16}.
\end{proof}

Let $\R$ be a Lie conformal superalgebra, and $M$ a conformal module of $\R$. Hereafter we denote $\mathbb{C}$-linear maps from the $\mathbb{C}[\partial]$-module $\R\oplus M$ to $\mathbb{C}[\lambda]\otimes(\R\oplus M)$ in the following form of matrices:
\begin{displaymath}
(*) \ \ \ \  d_\lambda =
\left( \begin{array}{cc}
\ d_{11} & d_{12} \\
d_{21} & d_{22}
\end{array}\right)_\lambda:\R \oplus M\rightarrow \mathbb{C}[\lambda]\otimes(\R\oplus M), where
\end{displaymath}
\begin{eqnarray*}
&{d_{11}}_\lambda:\R\rightarrow \mathbb{C}[\lambda]\otimes\R, {d_{12}}_\lambda:M\rightarrow \mathbb{C}[\lambda]\otimes\R,\\
&{d_{21}}_\lambda:\R\rightarrow \mathbb{C}[\lambda]\otimes M,{d_{22}}_\lambda:M\rightarrow \mathbb{C}[\lambda]\otimes M.
\end{eqnarray*}

\begin{lem}\label{lemm2-18}
Let $\R$ be a Lie conformal superalgebra, and $M$ a conformal module of $\R$. Then $d$ is given by $(*)$ is a conformal linear map of degree $\theta$ if and only if all
$d_{11},d_{12},d_{21}$ and $d_{22}$ are conformal linear maps of degree $\theta$.
\end{lem}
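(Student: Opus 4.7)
The plan is to unwind Definition \ref{def2-4} component by component, using that both the $\mathbb{C}[\partial]$-action and the $\mathbb{Z}_2$-grading on $\R\oplus M$ are defined coordinatewise (i.e.\ $\partial(r+m)=\partial r+\partial m$ and $(\R\oplus M)_\mu=\R_\mu\oplus M_\mu$). Because of this coordinatewise structure, I expect the argument to amount to observing that the two defining conditions for a conformal linear map of degree $\theta$, namely
\[
d_\lambda\partial=(\partial+\lambda)d_\lambda \quad\text{and}\quad d_\lambda\bigl((\R\oplus M)_\mu\bigr)\subseteq (\R\oplus M)_{\mu+\theta}[\lambda],
\]
split diagonally with respect to the decomposition $\R\oplus M=\R\oplus M$.

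For the forward direction, I would assume $d$ is conformal linear of degree $\theta$. Applying $d_\lambda\partial=(\partial+\lambda)d_\lambda$ to a pure element $r\in\R$ and then projecting onto $\R$ (resp.\ $M$) yields exactly the sesquilinearity relation for $d_{11}$ (resp.\ $d_{21}$); the same applied to $m\in M$ gives the relation for $d_{12}$ and $d_{22}$. Similarly, evaluating the grading condition on $r\in\R_\mu$ and $m\in M_\mu$ separately and projecting onto each summand shows that every $d_{ij}$ preserves the grading up to a shift by $\theta$.

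For the converse, I would assume all four entries $d_{11},d_{12},d_{21},d_{22}$ are conformal linear of degree $\theta$. Writing an arbitrary element of $\R\oplus M$ as $r+m$ and expanding
\[
d_\lambda(r+m)=d_{11,\lambda}(r)+d_{21,\lambda}(r)+d_{12,\lambda}(m)+d_{22,\lambda}(m),
\]
a direct verification using $\partial(r+m)=\partial r+\partial m$ and the sesquilinearity of each $d_{ij}$ shows $d_\lambda\partial=(\partial+\lambda)d_\lambda$; the grading condition for $d$ follows immediately from the corresponding conditions on each $d_{ij}$ together with $(\R\oplus M)_{\mu+\theta}=\R_{\mu+\theta}\oplus M_{\mu+\theta}$.

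There is no real obstacle here: the lemma is essentially a bookkeeping statement saying that the matrix notation $(*)$ is compatible with the definition of a conformal linear map of a fixed degree, precisely because $\partial$ and the $\mathbb{Z}_2$-grading on $\R\oplus M$ act as direct sums. The only point to keep track of carefully is that conformality (the $\partial$-equivariance condition) and the degree condition are preserved by the projections $\R\oplus M\twoheadrightarrow\R$ and $\R\oplus M\twoheadrightarrow M$ as well as by the inclusions $\R\hookrightarrow\R\oplus M$ and $M\hookrightarrow\R\oplus M$, so the equivalence is simultaneous for all four entries.
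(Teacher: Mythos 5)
Your proposal is correct and matches the paper's intent: the paper simply states that the lemma ``is straightforward by Definition \ref{def2-4}'', and your componentwise unwinding of the sesquilinearity and grading conditions is exactly the bookkeeping being left implicit. Nothing further is needed.
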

\begin{proof}
It is straightforward by Definition \ref{def2-4}.
\end{proof}

\begin{thm}\label{th2-19}
Let $\R$ be a Lie conformal superalgebra with $\lambda$-bracket$[-_\lambda-]$, $M$ a conformal module of $\R$, and $\rho:\R\rightarrow\mathbb{C}[\lambda]\otimes\Cend(M):r\rightarrow\rho(r)_\lambda$ the corresponding representation. Then a conformal linear map $d_\lambda$ given by $(*)$ is a conformal derivation of degree $\theta$ of $\R\ltimes M$ if and only if $d_\lambda$ satisfies the following conditions:
\begin{enumerate}
\item[$(1)$] $d_{11}$ is a conformal derivation of degree $\theta$ of $\R$.
\item[$(2)$] For any $r\in\R$ and $m,m'\in M$, ${d_{12}}_\lambda(\rho(r)_\mu m)=(-1)^{\theta|r|}[r_\mu({d_{12}}_\lambda m)],$\\
$\rho({d_{12}}_\lambda m)_{\lambda+\mu}(m')=(-1)^{|m||m'|}\rho({d_{12}}_\lambda m')_{-\partial-\mu}(m)$.
\item[$(3)$] $d_{21}$ is a conformal derivation of degree $\theta$ from $\R$ to $M$.
\item[$(4)$] For any $r\in\R$ and $m\in M$,\\
${d_{22}}_\lambda(\rho(r)_\mu m)=\rho({d_{11}}_\lambda r)_{\lambda+\mu}(m)+(-1)^{\theta|r|}\rho(r)_\mu({d_{22}}_\lambda m).$
\end{enumerate}
\end{thm}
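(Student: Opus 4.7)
The plan is to apply the conformal derivation identity
\[ d_\lambda[X_\mu Y]_M=[(d_\lambda X)_{\lambda+\mu}Y]_M+(-1)^{\theta|X|}[X_\mu(d_\lambda Y)]_M \]
to pairs $(X,Y)$ of pure homogeneous elements in $\R\ltimes M=\R\oplus M$, and then split each equation into its $\R$- and $M$-components using the block decomposition from Lemma~\ref{lemm2-18}. By $\mathbb{C}[\partial]$-bilinearity it suffices to handle the four cases $(r,r')$, $(r,m')$, $(m,r')$, $(m,m')$, with $r,r'\in\R$ and $m,m'\in M$; the semidirect bracket formula (\ref{lemm2-14}) tells us which of the two components is nontrivial in each case.

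Case $(r,r')$: here $[r_\mu r']_M=[r_\mu r']\in\R$, so writing $d_\lambda r=d_{11\,\lambda}r+d_{21\,\lambda}r$ and $d_\lambda r'=d_{11\,\lambda}r'+d_{21\,\lambda}r'$ and expanding both sides yields one identity in $\R$ and one in $M$. The $\R$-component is precisely the conformal derivation axiom (\ref{defa}) for $d_{11}$, yielding~(1); the $M$-component is exactly Definition~\ref{def2-9} with $d$ replaced by $d_{21}$ and $|d|$ by $\theta$, yielding~(3).

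Case $(r,m')$: since $[r_\mu m']_M=\rho(r)_\mu m'\in M$, extracting the $\R$-component gives $d_{12\,\lambda}(\rho(r)_\mu m')=(-1)^{\theta|r|}[r_\mu(d_{12\,\lambda}m')]$, the first equation of~(2), while the $M$-component collapses to~(4). Case $(m,m')$: because $[m_\mu m']_M=0$, the $\R$-component is vacuous, and the $M$-component unfolds to $\rho(d_{12\,\lambda}m)_{\lambda+\mu}m'=(-1)^{|m||m'|}\rho(d_{12\,\lambda}m')_{-\partial-\mu}m$, the second equation of~(2).

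The remaining case $(m,r')$ has $[m_\mu r']_M=-(-1)^{|r'||m|}\rho(r')_{-\partial-\mu}m$, and its expansion produces identities phrased in terms of $\rho(r')_{-\partial-\mu}$; Lemma~\ref{lemm2-16} converts the $\R$-component into the first equation of~(2) and Lemma~\ref{lemm2-17} converts the $M$-component into~(4), so no new constraint emerges. Since every manipulation is reversible, the four conditions (1)--(4) are simultaneously necessary and sufficient. The main obstacle is sign bookkeeping: each expansion of (\ref{lemm2-14}) produces several factors of $(-1)$ in the degrees of $d,r,r',m,m'$, and care is needed in invoking Lemmas~\ref{lemm2-16} and~\ref{lemm2-17} to confirm that case $(m,r')$ introduces no new information beyond what cases $(r,m')$ and $(m,m')$ already supply.
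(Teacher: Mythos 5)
Your proposal is correct and follows essentially the same route as the paper: the paper likewise expands the derivation identity on $\R\ltimes M$, specializes to the four pure cases (setting $m=m'=0$, $r=r'=0$, $m=r'=0$, $m'=r=0$), reads off conditions (1)--(4) from the $\R$- and $M$-components, and invokes Lemmas \ref{lemm2-16} and \ref{lemm2-17} to show the $(m,r')$ case duplicates the $(r,m')$ case. No substantive difference.
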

\begin{proof}
For any $r+m,r'+m'\in\R\ltimes M$, by Eq.(\ref{lemm2-14}), we have
\begin{eqnarray*}
&&d_\lambda([(r+m)_\mu(r'+m')])\\
&=&{d_{11}}_\lambda([r_\mu r'])+{d_{12}}_\lambda(\rho(r)_\mu m')-(-1)^{|r'||m|}{d_{12}}_\lambda(\rho(r')_{-\partial-\mu}m)\\
&&+{d_{21}}_\lambda([r_\mu r'])+{d_{22}}_\lambda(\rho(r)_\mu m')-(-1)^{|r'||m|}{d_{22}}_\lambda(\rho(r')_{-\partial-\mu}m),\\
&&[(d_\lambda(r+m))_{\lambda+\mu}(r'+m')]\\
&=&[({d_{11}}_\lambda(r)+{d_{12}}_\lambda(m)+{d_{21}}_\lambda(r)+{d_{22}}_\lambda(m))_{\lambda+\mu}(r'+m')]\\
&=&[({d_{11}}_\lambda(r))_{\lambda+\mu}r']+[({d_{12}}_\lambda(m))_{\lambda+\mu}r']+\rho({d_{11}}_\lambda(r))_{\lambda+\mu}(m')
+\rho({d_{12}}_\lambda(m))_{\lambda+\mu}(m')\\
&&-(-1)^{|r'|(|r|+\theta)}\rho(r')_{-\partial-\lambda-\mu}({d_{21}}_\lambda(r))
-(-1)^{|r'|(|m|+\theta)}\rho(r')_{-\partial-\lambda-\mu}({d_{22}}_\lambda(m))
\end{eqnarray*}
and
\begin{eqnarray*}
&&(-1)^{\theta|r|}[(r+m)_\mu (d_\lambda(r'+m'))]\\
&=&(-1)^{\theta|r|}[(r+m)_\mu({d_{11}}_\lambda(r')+{d_{12}}_\lambda(m')+{d_{21}}_\lambda(r')+{d_{22}}_\lambda(m'))]\\
&=&(-1)^{\theta|r|}[r_\mu({d_{11}}_\lambda(r'))]+(-1)^{\theta|r|}[r_\mu({d_{12}}_\lambda(m'))]+(-1)^{\theta|r|}\rho(r)_\mu({d_{21}}_\lambda(r'))\\
&&+(-1)^{\theta|r|}\rho(r)_\mu({d_{22}}_\lambda(m'))-(-1)^{|r||r'|}\rho({d_{11}}_\lambda(r'))_{-\partial-\mu}(m)\\
&&-(-1)^{|m||m'|}\rho({d_{12}}_\lambda(m'))_{-\partial-\mu}(m).
\end{eqnarray*}

Suppose that $d_\lambda$ is a conformal derivation of degree $\theta$ of $\R\ltimes M$. By Eq.\eqref{defa} and taking $m=m'=0$ in the above identities, we get
\begin{eqnarray}
&&{d_{11}}_\lambda[r_\mu r']=[({d_{11}}_\lambda(r))_{\lambda+\mu}r']+(-1)^{\theta|r|}[r_\mu({d_{11}}_\lambda(r'))],\label{2-20}\\
&&{d_{21}}_\lambda[r_\mu r']=(-1)^{\theta|r|}\rho(r)_\mu({d_{21}}_\lambda(r'))-(-1)^{|r'|(|r|+\theta)}\rho(r')_{-\partial-\lambda-\mu}({d_{21}}_\lambda(r));\label{2-21}
\end{eqnarray}

Taking $r=r'=0$,
\begin{eqnarray}
\rho({d_{12}}_\lambda(m))_{\lambda+\mu}(m')-(-1)^{|m||m'|}\rho({d_{12}}_\lambda(m'))_{-\partial-\mu}(m)=0;\label{2-22}
\end{eqnarray}

Taking $m=0,r'=0$,
\begin{eqnarray}
&&{d_{12}}_\lambda(\rho(r)_\mu(m'))=(-1)^{\theta|r|}[r_\mu({d_{12}}_\lambda(m'))],\label{2-23}\\
&&{d_{22}}_\lambda(\rho(r)_\mu(m'))=\rho({d_{11}}_\lambda(r))_{\lambda+\mu}(m')+(-1)^{\theta|r|}\rho(r)_\mu({d_{22}}_\lambda(m'));\label{2-24}
\end{eqnarray}

Taking $m'=0,r=0$,
\begin{eqnarray}
&&-(-1)^{|r'||m|}{d_{12}}_\lambda(\rho(r')_{-\partial-\mu}m)=[({d_{12}}_\lambda(m))_{\lambda+\mu}r'],\label{2-25}\\
&&{d_{22}}_\lambda(\rho(r')_{-\partial-\mu}m)=\rho({d_{11}}_\lambda(r'))_{-\partial-\mu}(m)+(-1)^{\theta|r'|}\rho(r')_{-\partial-\lambda-\mu}({d_{22}}_\lambda(m)).\label{2-26}
\end{eqnarray}
By Lemmas \ref{lemm2-16} and \ref{lemm2-17}, Eq.\eqref{2-23} is equivalent to Eq.\eqref{2-25}, and Eq.\eqref{2-24} is equivalent to Eq.\eqref{2-26}.
By Definition \ref{def2-9}, Lemmas \ref{lemm2-16}, \ref{lemm2-17} and \ref{lemm2-18}, $d_\lambda$ is a conformal derivation of degree $\theta$ of $\R\ltimes M$ if and only if Eqs.\eqref{2-20}-\eqref{2-26} are satisfied. Eqs.\eqref{2-20}-\eqref{2-26} are equivalent to conditions $(1)-(4)$ of Theorem \ref{th2-19}.
\end{proof}

\section{Deformations of Lie conformal superalgebras}

In the following we aim to develop cohomology theory of Lie conformal superalgebras. To do this, we need the following concept.

\bdefn\rm
An $n$-cochain ($n\in \mathbb{Z}_+$) of a Lie conformal superalgebra $\R$ with coefficients in a module $M$ is a $\mathbb{C}$-linear map of degree $\theta$
\begin{eqnarray*}
\gamma:\R^{n}&\rightarrow& M[\lambda_{1},\cdots,\lambda_{n}],\\
(a_1,\cdots,a_n)&\mapsto& \gamma_{\lambda_{1},\cdots,\lambda_{n}}(a_1,\cdots,a_n),
\end{eqnarray*}
where $M[\lambda_{1},\cdots,\lambda_{n}]$ denotes the space of polynomials with coefficients in $M$, satisfying the following conditions:

Conformal antilinearity: $$\gamma_{\lambda_{1},\cdots,\lambda_{n}}(a_1,\cdots,\partial a_i,\cdots,a_n) =-\lambda_{i}\gamma_{\lambda_{1},\cdots,\lambda_{n}}(a_1,\cdots,a_i,\cdots,a_n);$$

Skew-symmetry:
\begin{eqnarray*}&&\gamma_{\lambda_{1},\cdots,\lambda_i,\cdots,\lambda_j,\cdots,\lambda_{n}}(a_1,\cdots,a_i,\cdots,a_j,\cdots,a_n)\\
&=&-(-1)^{|a_i||a_j|}\gamma_{\lambda_{1},\cdots,\lambda_j,\cdots,\lambda_i,\cdots,\lambda_{n}}(a_1,\cdots,a_j,\cdots,a_i,\cdots,a_n).\end{eqnarray*}
\edefn

Let $\R^{\otimes0}=\mathbb{C}$ as usual, so that a $0$-cochain $\gamma$ is an element of $M$. Define a differential ${{\rm{\bf d}}}$ of a cochain $\gamma$ by
\begin{eqnarray*}
&&({{\rm{\bf d}}}\gamma)_{\lambda_{1},\cdots,\lambda_{n+1}}(a_1,\cdots,a_{n+1})\\
&=&\mbox{$\sum\limits_{i=1}^{n+1}$}(-1)^{i+1}(-1)^{(|\gamma|+|a_1|+\cdots+|a_{i-1}|)|a_i|}\rho(a_i)_{\lambda_i}\gamma_{\lambda_{1},\cdots,\hat{\lambda_i},
\cdots,\lambda_{n+1}}(a_1,\cdots,\hat{a_i},\cdots,a_{n+1})\\
&&+\mbox{$\sum\limits_{1\leq i<j}^{n+1}$}(-1)^{i+j}(-1)^{(|a_1|+\cdots+|a_{i-1}|)|a_i|+(|a_1|+\cdots+|a_{j-1}|)|a_j|+|a_i||a_j|}\\
&&\gamma_{\lambda_i+\lambda_j,\lambda_1,\cdots,\hat{\lambda_i},\cdots,\hat{\lambda}_j,\cdots,\lambda_{n+1}}
([{a_i}_{\lambda_i}a_j],a_1,\cdots,\hat{a}_i,\cdots,\hat{a}_j,\cdots,a_{n+1}),
\end{eqnarray*}
where $\rho$ is the corresponding representation of $M$, and $\gamma$ is extended linearly over the polynomials in $\lambda_i$. In particular, if $\gamma$ is a $0$-cochain, then $({{\rm{\bf d}}}\gamma)_\lambda a=a_\lambda \gamma$.
\begin{re}\rm
Conformal antilinearity implies the following relation for an $n$-cochain $\gamma$:
$$\gamma_{\lambda+\mu,\lambda_1,\cdots}([a_\lambda b],a_1,\cdots)=\gamma_{\lambda+\mu,\lambda_1,\cdots}([a_{-\partial-\mu} b],a_1,\cdots).$$
\end{re}
\begin{prop} ${{\rm{\bf d}}}\gamma$ is a cochain and ${{\rm{\bf d}}}^{2}=0$.
\end{prop}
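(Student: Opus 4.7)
The plan is a two-part verification. First, I check that $\mathbf{d}\gamma$ satisfies conformal antilinearity and super-skew-symmetry, so $\mathbf{d}$ indeed sends $n$-cochains to $(n+1)$-cochains. Second, I expand $\mathbf{d}^2\gamma$ and show all terms cancel, reducing each cancellation to one of three core identities: the super-Jacobi identity $(C3)_\lambda$ for $\R$, the representation identity
\[
\rho(a)_\lambda\rho(b)_\mu - (-1)^{|a||b|}\rho(b)_\mu\rho(a)_\lambda = \rho([a_\lambda b])_{\lambda+\mu},
\]
and the super-skew-symmetry of $\gamma$ itself.

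For conformal antilinearity of $\mathbf{d}\gamma$, I substitute $\partial a_k$ for $a_k$ in the defining formula and verify that a factor of $-\lambda_k$ is produced in each summand. The three cases are: (a) the outer summation index equals $k$, handled by $\rho(\partial a_k)_{\lambda_k} = -\lambda_k\rho(a_k)_{\lambda_k}$; (b) $a_k$ appears inside a bracket $[(\partial a_k)_{\lambda_k}a_j]$ or $[a_{i,\lambda_i}(\partial a_k)]$, handled by axiom $(C1)_\lambda$ together with the preceding Remark, which allows $-\partial-\lambda_i$ to be traded for $\lambda_k$ whenever the bracket occupies the first slot of $\gamma$; (c) $a_k$ remains an argument of $\gamma$, handled by conformal antilinearity of $\gamma$ itself. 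For super-skew-symmetry under the transposition $a_p\leftrightarrow a_{p+1}$ (simultaneously $\lambda_p\leftrightarrow\lambda_{p+1}$), I group summands by how these two arguments are used — both kept inside $\gamma$, exactly one extracted by $\rho$, one extracted and one in a bracket, or both joined into a single bracket — and verify that the Koszul signs produced by $\gamma$ combined with the sign from $(C2)_\lambda$ reduce to the overall sign $-(-1)^{|a_p||a_{p+1}|}$.

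For $\mathbf{d}^2\gamma = 0$, I expand $(\mathbf{d}\mathbf{d}\gamma)_{\lambda_1,\dots,\lambda_{n+2}}(a_1,\dots,a_{n+2})$ and sort the summands into three classes: (I) iterated $\rho$-terms $\rho(a_k)_{\lambda_k}\rho(a_i)_{\lambda_i}\gamma(\cdots)$; (II) mixed terms of the form $\rho(a_m)_{\lambda_m}\gamma(\cdots,[a_{p,\lambda_p}a_q],\cdots)$ or $\rho([a_{k,\lambda_k}a_l])_{\lambda_k+\lambda_l}\gamma(\cdots)$; (III) pure-$\gamma$ terms with either a doubly-nested bracket $\gamma(\cdots,[[a_{i,\lambda_i}a_j]_{\lambda_i+\lambda_j}a_k],\cdots)$ or two disjoint brackets $\gamma(\cdots,[a_{i,\lambda_i}a_j],\cdots,[a_{k,\lambda_k}a_l],\cdots)$. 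The class (I) terms pair as $(k,i)$ with $(i,k)$ and combine, via the representation identity, with the second subtype of class (II) to cancel. The first subtype of class (II) cancels pairwise: the term produced by first extracting $a_m$ and then forming $[a_p,a_q]$ cancels against the term produced by first forming $[a_p,a_q]$ and then extracting $a_m$. The disjoint-bracket terms in class (III) cancel in pairs by interchanging which bracket is listed first, using the super-skew-symmetry of $\gamma$ in those two slots. The nested-bracket terms in class (III) cancel in triples by the super-Jacobi identity $(C3)_\lambda$ applied in the first slot of $\gamma$.

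The main obstacle is Koszul-sign bookkeeping. The formula for $\mathbf{d}$ carries three independent sign conventions — positional $(-1)^{i+j}$, parity-sorting $(-1)^{(|a_1|+\cdots+|a_{i-1}|)|a_i|}$, and pair signs $(-1)^{|a_i||a_j|}$ — and every reindexing in the $\mathbf{d}^2$ computation permutes them differently. I plan to abbreviate $\varepsilon(i):=(-1)^{|a_1|+\cdots+|a_{i-1}|}$, so that each Koszul factor becomes an elementary product of $\varepsilon$'s, and then match terms class-by-class against the three core identities. A further subtlety is that the Jacobi cancellation in class (III) requires the bracket $[a_i,a_j]$ to sit in the first slot of $\gamma$; the preceding Remark is the tool that permits trading $\lambda_i$ for $-\partial-\lambda_j$ inside $\gamma$ whenever this relocation is needed.
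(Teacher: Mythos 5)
Your proposal is correct and follows essentially the same route as the paper: the paper likewise expands $({\rm{\bf d}}^{2}\gamma)$ into a list of sums that match your classes (I)--(III) exactly, cancels the mixed $\rho$--bracket terms in pairs, kills the nested-bracket terms by the Jacobi identity and the disjoint-bracket terms by skew-symmetry of $\gamma$, and combines the iterated-$\rho$ terms with the $\rho([a_{i\,\lambda_i}a_j])$ terms via the representation identity. The only difference is that the paper delegates the verification that ${\rm{\bf d}}\gamma$ is again a cochain to the argument of \cite[Lemma 2.1]{BKV}, whereas you carry it out directly.
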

\begin{proof} Let $\gamma$ be an $n$-cochain. As discussed in the proof of \cite[Lemma 2.1]{BKV},
${{\rm{\bf d}}}\gamma$ satisfies conformal antilinearity and skew-symmetry. Thus
${{\rm{\bf d}}}\gamma$ is an $(n+1)$-cochain.

A straightforward computation shows that
\begin{align}
&({{\rm{\bf d}}}^{2}\gamma)_{\lambda_{1},\cdots,\lambda_{n+2}}(a_1,\cdots,a_{n+2})\notag\\
=&\sum_{i=1}^{n+2}(-1)^{i+1+|\gamma||a_i|+A_i}\rho(a_i)_{\lambda_i}({{\rm{\bf d}}}\gamma)_{\lambda_{1},\cdots,\hat{\lambda}_i,
\cdots,\lambda_{n+2}}(a_1,\cdots,\hat{a}_i,\cdots,a_{n+2})\notag\\
&+\sum_{1\leq i<j}^{n+2}(-1)^{i+j+A_i+A_j+|a_i||a_j|}({{\rm{\bf d}}}\gamma)_{\lambda_i+\lambda_j,\lambda_1,\cdots,\hat{\lambda}_{i,j},\cdots,\lambda_{n+2}}
([{a_i}_{\lambda_i}a_j],a_1,\cdots,\hat{a}_{i,j},\cdots,a_{n+2})\notag\\
=&\sum_{1\leq j<i}^{n+2}(-1)^{i+j+|\gamma|(|a_i|+|a_j|)+A_i+A_j}
\rho(a_i)_{\lambda_i}(\rho(a_j)_{\lambda_j}\gamma_{\lambda_{1},\cdots,\hat{\lambda}_{j,i},
\cdots,\lambda_{n+2}}(a_1,\cdots,\hat{a}_{j,i},\cdots,a_{n+2}))\label{*1}\\
&+\sum_{1\leq i<j}^{n+2}(-1)^{i+j+1+|\gamma|(|a_i|+|a_j|)+A_i+(A_j-|a_i|)}\notag\\
&\rho(a_i)_{\lambda_i}(\rho(a_j)_{\lambda_j}\gamma_{\lambda_{1},\cdots,\hat{\lambda}_{i,j},
\cdots,\lambda_{n+2}}(a_1,\cdots,\hat{a}_{i,j},\cdots,a_{n+2}))\label{*2}\\
&+\sum_{1\leq j<k<i}^{n+2}(-1)^{i+j+k+1+|\gamma||a_i|+A_i+A_j+A_k+|a_j||a_k|}\notag\\
&\rho(a_i)_{\lambda_i}\gamma_{\lambda_j+\lambda_k,\lambda_1,\cdots,\hat{\lambda}_{j,k,i},\cdots,\lambda_{n+2}}
([{a_j}_{\lambda_j}a_k],a_1,\cdots,\hat{a}_{j,k,i},\cdots,a_{n+2})\label{*3}\\
&+\sum_{1\leq j<i<k}^{n+2}(-1)^{i+j+k+|\gamma||a_i|+A_i+A_j+(A_k-|a_i|)+|a_j||a_k|}\notag\\
&\rho(a_i)_{\lambda_i}\gamma_{\lambda_j+\lambda_k,\lambda_1,\cdots,\hat{\lambda}_{j,i,k},\cdots,\lambda_{n+2}}
([{a_j}_{\lambda_j}a_k],a_1,\cdots,\hat{a}_{j,i,k},\cdots,a_{n+2})\label{*4}\\
&+\sum_{1\leq i<j<k}^{n+2}(-1)^{i+j+k+1+|\gamma||a_i|+A_i+(A_j-|a_i|)+(A_k-|a_i|)+|a_j||a_k|}\notag\\
&\rho(a_i)_{\lambda_i}\gamma_{\lambda_j+\lambda_k,\lambda_1,\cdots,\hat{\lambda}_{i,j,k},\cdots,\lambda_{n+2}}
([{a_j}_{\lambda_j}a_k],a_1,\cdots,\hat{a}_{i,j,k},\cdots,a_{n+2})\label{*5}\\
&+\sum_{1\leq k<i<j}^{n+2}(-1)^{i+j+k+A_i+A_j+A_k+|a_i||a_j|+(|\gamma|+|a_i|+|a_j|)|a_k|}\notag\\
&\rho(a_k)_{\lambda_k}\gamma_{\lambda_i+\lambda_j,\lambda_1,\cdots,\hat{\lambda}_{k,i,j},\cdots,\lambda_{n+2}}
([{a_i}_{\lambda_i}a_j],a_1,\cdots,\hat{a}_{k,i,j},\cdots,a_{n+2})\label{*6}\\
&+\sum_{1\leq i<k<j}^{n+2}(-1)^{i+j+k+1+A_i+A_j+A_k+|a_i||a_j|+(|\gamma|+|a_j|)|a_k|}\notag\\
&\rho(a_k)_{\lambda_k}\gamma_{\lambda_i+\lambda_j,\lambda_1,\cdots,\hat{\lambda}_{i,k,j},\cdots,\lambda_{n+2}}
([{a_i}_{\lambda_i}a_j],a_1,\cdots,\hat{a}_{i,k,j},\cdots,a_{n+2})\label{*7}\\
&+\sum_{1\leq i<j<k}^{n+2}(-1)^{i+j+k+A_i+A_j+A_k+|a_i||a_j|+|\gamma||a_k|}\notag\\
&\rho(a_k)_{\lambda_k}\gamma_{\lambda_i+\lambda_j,\lambda_1,\cdots,\hat{\lambda}_{i,j,k},\cdots,\lambda_{n+2}}
([{a_i}_{\lambda_i}a_j],a_1,\cdots,\hat{a}_{i,j,k},\cdots,a_{n+2})\label{*8}\\
&+\sum_{1\leq i<j}^{n+2}(-1)^{i+j+A_i+A_j+|a_i||a_j|+|\gamma|(|a_i|+|a_j|)}\notag\\
&\rho([{a_i}_{\lambda_i}a_j])_{\lambda_i+\lambda_j}\gamma_{\lambda_{1},\cdots,\hat{\lambda}_j,\cdots,\hat{\lambda}_i,
\cdots,\lambda_{n+2}}(a_1,\cdots,\hat{a}_j,\cdots,\hat{a}_i,\cdots,a_{n+2})\label{*9}\\
&+\sum_{distinct\,i,j,k,l,i<j,k<l}^{n+2}(-1)^{i+j+k+l}sign\{i,j,k,l\}(-1)^{A_i+A_j+|a_i||a_j|+(|a_i|+|a_j|)(|a_k|+|a_l|)}\notag\\
&(-1)^{(A_k-|a_i|-|a_j|)+(A_l-|a_i|-|a_j|)+|a_k||a_l|}\notag\\
&\gamma_{\lambda_k+\lambda_l,\lambda_i+\lambda_j,\lambda_1,\cdots,\hat{\lambda}_{i,j,k,l},\cdots,\lambda_{n+2}}
([{a_k}_{\lambda_k}a_l],[{a_i}_{\lambda_i}a_j],a_1,\cdots,\hat{a}_{i,j,k,l},\cdots,a_{n+2})\label{*10}\\
&+\sum_{i,j,k=1,i<j,k\neq i,j}^{n+2}(-1)^{i+j+k+1}sign\{i,j,k\}(-1)^{A_i+A_j+|a_i||a_j|+(A_k-|a_i|-|a_j|)}\notag\\
&\gamma_{\lambda_i+\lambda_j+\lambda_k,\lambda_1,\cdots,\hat{\lambda}_{i,j,k},\cdots,\lambda_{n+2}}
([[{a_i}_{\lambda_i}a_j]_{\lambda_i+\lambda_j}a_k],a_1,\cdots,\hat{a}_{i,j,k},\cdots,a_{n+2})\label{*11},
\end{align}
where $A_i=(|a_1|+\cdots+|a_{i-1}|)|a_i|, A_j-|a_i|=(|a_1|+\cdots+\hat{a_i}+\cdots+|a_{j-1}|)|a_j|$ and $sign\{i_1,\cdots,i_p\}$ is the sign of the permutation putting the indices in increasing order and $\hat{a}_{i,j,\cdots}$
means that $a_i,a_j,\cdots$ are omitted.

It is obvious that Eqs.\eqref{*3} and \eqref{*8} summations cancel each other. The same is true for Eqs.\eqref{*4} and \eqref{*7}, \eqref{*5} and \eqref{*6}. The Jacobi identity implies Eq.\eqref{*11} $=0$, whereas skew-symmetry of $\gamma$ gives Eq.\eqref{*10} $=0$. As $M$ is an $\R$-module,
\begin{eqnarray*}
-\rho(a_i)_{\lambda_i}(\rho(a_j)_{\lambda_j}m)+(-1)^{|a_i||a_j|}\rho(a_j)_{\lambda_j}(\rho(a_i)_{\lambda_i}m)
+\rho([{a_i}_{\lambda_i}a_j])_{\lambda_i+\lambda_j}(m)=0.
\end{eqnarray*}
 Eqs.\eqref{*1}, \eqref{*2} and \eqref{*9} summations cancel.  This proves ${{\rm{\bf d}}}^{2}\gamma=0$.
\end{proof}

Thus the cochains of a Lie conformal superalgebra $\R$ with coefficients in a module $M$ form a comlex, which is denoted by
 $$\rm C^{\bullet}=C^{\bullet}(\R,M)=\bigoplus_{n\in\Z_{+}}C^{n}(\R,M).$$ Where $\rm C^{n}(\R,M)=C^{n}(\R,M)_{\bar{0}}\oplus C^{n}(\R,M)_{\bar{1}}$, $C^{n}(\R,M)_{\theta}$ is the set of $n$-cochain of degree $\theta$ $(\theta\in\mathbb{Z}_{2})$.
The \emph{cohomology} ${\rm H}^{\bullet}(\R,M)$ of a Lie conformal superalgebra $\R$ with coefficients
in a module $M$ is the cohomology of complex $\rm C^{\bullet}$.

Let $\R$ be a Lie conformal superalgebra. Define
\begin{eqnarray}\label{la2}
\rho(a)_\lambda b=[a_\lambda b],\ \ \forall \ a,b\in\R.
\end{eqnarray}
\begin{prop} $(\rho,\R)$ is a representation with the $\lambda$-action given in {\rm Eq.}\eqref{la2}.
\end{prop}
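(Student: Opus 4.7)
The plan is to verify directly that the map $\rho:\R\to\Cend(\R)$, $\rho(a)_\lambda b:=[a_\lambda b]$, is a homomorphism of Lie conformal superalgebras in the sense of Definition \ref{def2.6}, using the equivalent characterization of a representation recorded immediately after that definition. This breaks into three checks: (i) each $\rho(a)$ is actually an element of $\Cend(\R)$ of degree $|a|$, (ii) $\rho$ commutes with $\partial$, and (iii) $\rho$ transports the $\lambda$-bracket on $\R$ to the commutator $\lambda$-bracket on $gc(\R)$ given by Eq.\eqref{def2-5}.

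For (i), by Definition \ref{def2-4} the conformal sesquilinearity to establish is $\rho(a)_\lambda\partial_\R=(\partial_\R+\lambda)\rho(a)_\lambda$, i.e.\ $[a_\lambda\partial b]=(\partial+\lambda)[a_\lambda b]$. This identity is not one of the three axioms listed in Definition \ref{def2-1}, but it is a standard consequence: one applies skew-symmetry $(\rm C2)_\lambda$ to rewrite $[a_\lambda\partial b]$ in terms of $[(\partial b)_{-\partial-\lambda}a]$, then uses left sesquilinearity $(\rm C1)_\lambda$ to pull $\partial b$ out, and finally invokes skew-symmetry once more to return to an expression in $[a_\lambda b]$. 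The $\mathbb{Z}_2$-grading condition $\rho(a)_\lambda(\R_\mu)\subseteq\R_{\mu+|a|}[\lambda]$ is immediate since the $\lambda$-bracket respects the grading.

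For (ii), recall that $\partial$ on $\Cend(\R)$ satisfies $\partial f_\lambda=-\lambda f_\lambda$, so the condition $\rho(\partial a)=\partial\rho(a)$ reduces to
\[\rho(\partial a)_\lambda b=[(\partial a)_\lambda b]=-\lambda[a_\lambda b]=-\lambda\rho(a)_\lambda b,\]
which is precisely $(\rm C1)_\lambda$. For the crux (iii), by Eq.\eqref{def2-5} I must verify, for all $a,b,c\in\R$,
\[\rho(a)_\lambda\rho(b)_\mu(c)-(-1)^{|a||b|}\rho(b)_\mu\rho(a)_\lambda(c)=\rho([a_\lambda b])_{\lambda+\mu}(c).\]
Unravelling $\rho$, the two sides become $[a_\lambda[b_\mu c]]-(-1)^{|a||b|}[b_\mu[a_\lambda c]]$ and $[[a_\lambda b]_{\lambda+\mu}c]$ respectively, so the equality is exactly the Jacobi identity $(\rm C3)_\lambda$. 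No step presents a genuine obstacle; the only point requiring any care is the right sesquilinearity used in (i), since it is a derived rather than a postulated identity of the axiom system.
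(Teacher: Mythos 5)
Your proposal is correct and follows the same route as the paper, which simply states that the proof ``only consists of checking the axioms from Definition \ref{def2.6}''; you have carried out that check explicitly (degree and conformal sesquilinearity of each $\rho(a)$ via $(\rm C1)_\lambda$ and $(\rm C2)_\lambda$, compatibility with $\partial$ via $(\rm C1)_\lambda$, and the commutator identity via $(\rm C3)_\lambda$), all correctly. The only substantive point beyond the paper's one-liner is your derivation of the right sesquilinearity $[a_\lambda(\partial b)]=(\partial+\lambda)[a_\lambda b]$, which is indeed the standard argument and is handled properly.
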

\begin{proof} It only consists of checking the axioms from Definition \ref{def2.6}.
\end{proof}

Let $\gamma\in C^{n}(\R,\R)$. Define an operator ${\rm{\bf \hat{d}}}:C^{n}(\R,\R)\rightarrow C^{n+1}(\R,\R)$ by
\begin{eqnarray*}
&&({\rm{\bf \hat{d}}}\gamma)_{\lambda_{1},\cdots,\lambda_{n+1}}(a_1,\cdots,a_{n+1})\\
&=&\sum_{i=1}^{n+1}(-1)^{i+1}(-1)^{(|\gamma|+|a_1|+\cdots+|a_{i-1}|)|a_i|}[{a_i}_{\lambda_i}\gamma_{\lambda_{1},\cdots,\hat{\lambda_i},
\cdots,\lambda_{n+1}}(a_1,\cdots,\hat{a_i},\cdots,a_{n+1})]\\
&&+\sum_{1\leq i<j}^{n+1}(-1)^{i+j}(-1)^{(|a_1|+\cdots+|a_{i-1}|)|a_i|+(|a_1|+\cdots+|a_{j-1}|)|a_j|+|a_i||a_j|}\\
&&\gamma_{\lambda_i+\lambda_j,\lambda_1,\cdots,\hat{\lambda_i},\cdots,\hat{\lambda}_j,\cdots,\lambda_{n+1}}
([{a_i}_{\lambda_i}a_j],a_1,\cdots,\hat{a}_i,\cdots,\hat{a}_j,\cdots,a_{n+1}).
\end{eqnarray*}
Obviously, the operator ${\rm{\bf \hat{d}}}$ is induced from the differential ${{\rm{\bf d}}}$. Thus ${\rm{\bf d}}$ preserves the space of cochains and satisfies ${\rm{\bf \hat{d}}}^2=0$. In the following the complex $C^{\bullet}(\R, \R)$ is assumed to be associated with the differential ${\rm{\bf \hat{d}}}$.

For $\psi\in C^{2}(\R,\R)_{\bar{0}}$, we consider a $t$-parameterized family of bilinear operations on $\R$
\begin{eqnarray}\label{1-3}
[a_\lambda b]_t=[a_\lambda b]+t\psi_{\lambda,-\partial-\lambda}(a,b), \ \forall \, a,b\in\R.
\end{eqnarray}
If $[\cdot_{\lambda}\cdot]_{t}$ endows $(\R, [\cdot_{\lambda}\cdot]_{t}, \alpha)$ with a Lie conformal superalgebra structure, we say that $\psi$ generates a deformation of the Lie conformal superalgebra $\R$. It is easy to see that $[\cdot_{\lambda}\cdot]_{t}$ satisfies $(\rm C1)_\lambda$ and $(\rm C2)_\lambda$. If it is true for $(\rm C3)_\lambda$, expanding the Jacobi identity for $[\cdot_{\lambda}\cdot]_{t}$ gives
\begin{eqnarray*}
&&[a_\lambda[b_\mu c]]+t([a_\lambda(\psi_{\mu,-\partial-\mu}(b,c))]
+\psi_{\lambda,-\partial-\lambda}(a,[b_\mu c]))\\
&&+t^{2}\psi_{\lambda,-\partial-\lambda}(a,\psi_{\mu,-\partial-\mu}(b,c))\\
&=&(-1)^{|a||b|}[b_\mu[a_\lambda c]]+(-1)^{|a||b|}t([b_\mu(\psi_{\lambda,-\partial-\lambda}(a,c))]
+\psi_{\mu,-\partial-\mu}(b,[a_\lambda c]))\\
&&+(-1)^{|a||b|}t^{2}\psi_{\mu,-\partial-\mu}(b,\psi_{\lambda,-\partial-\lambda}(a,c))\\
&&+[[a_\lambda b]_{\lambda+\mu}c]+t([(\psi_{\lambda,-\partial-\lambda}(a,b))_{\lambda+\mu}c]
+\psi_{\lambda+\mu,-\partial-\lambda-\mu}([a_\lambda b],c))\\
&&+t^{2}\psi_{\lambda+\mu,-\partial-\lambda-\mu}(\psi_{\lambda,-\partial-\lambda}(a,b),c).
\end{eqnarray*}
This is equivalent to the following relations
\begin{eqnarray}
&&[a_\lambda(\psi_{\mu,-\partial-\mu}(b,c))]+\psi_{\lambda,-\partial-\lambda}(a,[b_\mu c])\notag\\
&=&(-1)^{|a||b|}[b_\mu(\psi_{\lambda,-\partial-\lambda}(a,c))]+(-1)^{|a||b|}\psi_{\mu,-\partial-\mu}(b,[a_\lambda c])\notag\\
&&+[(\psi_{\lambda,-\partial-\lambda}(a,b))_{\lambda+\mu}c]+\psi_{\lambda+\mu,-\partial-\lambda-\mu}([a_\lambda b],c)\label{defor1}
\end{eqnarray}and
\begin{eqnarray}
&&\psi_{\lambda,-\partial-\lambda}(a,\psi_{\mu,-\partial-\mu}(b,c))\notag\\
&=&(-1)^{|a||b|}\psi_{\mu,-\partial-\mu}(b,\psi_{\lambda,-\partial-\lambda}(a,c))
+\psi_{\lambda+\mu,-\partial-\lambda-\mu}(\psi_{\lambda,-\partial-\lambda}(a,b),c).\label{defor2}
\end{eqnarray}
By conformal antilinearity of $\psi$, we have
\begin{eqnarray}
[(\psi_{\lambda,-\partial-\lambda}(a,b))_{\lambda+\mu}c]=[\psi_{\lambda,\mu}(a,b)_{\lambda+\mu}c].\label{defor3}
\end{eqnarray}

On the other hand, let $\psi$ be a cocycle, i.e., ${\rm{\bf \hat{d}}}\psi=0.$ Explicitly,
\begin{eqnarray}
0&=&({\rm{\bf \hat{d}}}\psi)_{\lambda,\mu,\gamma}(a,b,c)\notag\\
&=&[a_\lambda(\psi_{\mu,\gamma}(b,c))]-(-1)^{|a||b|}[b_\mu(\psi_{\lambda,\gamma}(a,c))]
+(-1)^{(|a|+|b|)|c|}[c_{\gamma}(\psi_{\lambda,\mu}(a,b))]\notag\\
&&-\psi_{\lambda+\mu,\gamma}([a_\lambda b],c)+(-1)^{|b||c|}\psi_{\lambda+\gamma,\mu}([a_\lambda c],b)
-(-1)^{|a||b|+|a||c|}\psi_{\mu+\gamma,\lambda}([b_\mu c],a)\notag\\
&=&[a_\lambda(\psi_{\mu,\gamma}(b,c))]-(-1)^{|a||b|}[b_\mu(\psi_{\lambda,\gamma}(a,c))]
-[(\psi_{\lambda,\mu}(a,b))_{-\partial-\gamma}c]\notag\\
&&+\psi_{\lambda,\mu+\gamma}(a,[b_\mu c])-(-1)^{|a||b|}\psi_{\mu,\lambda+\gamma}(b,[a_\lambda c])
-\psi_{\lambda+\mu,\gamma}([a_\lambda b],c). \label{defor4}
\end{eqnarray}
By $(\rm C1)_\lambda$, Eq.\eqref{defor3} and replacing $\gamma$ by $-\lambda-\mu-\partial$ in Eq.(\ref{defor4}), we obtain
\begin{eqnarray*}
0&=&[a_\lambda(\psi_{\mu,-\partial-\mu}(b,c))]-(-1)^{|a||b|}[b_\mu(\psi_{\lambda,-\partial-\lambda}(a,c))]
-[(\psi_{\lambda,\mu}(a,b))_{\lambda+\mu}c]\\
&&+\psi_{\lambda,-\partial-\lambda}(a,[b_\mu c])-(-1)^{|a||b|}\psi_{\mu,-\partial-\mu}(b,[a_\lambda c])
-\psi_{\lambda+\mu,-\partial-\lambda-\mu}([a_\lambda b],c),
\end{eqnarray*}
which is exactly Eq.\eqref{defor1}. Thus, when $\psi$ is a 2-cocycle satisfying Eq.\eqref{defor2},
 $(\R, [\cdot_{\lambda}\cdot]_{t}, \alpha)$ forms a Lie conformal superalgebra. In this case,
$\psi$ generates a deformation of the Lie conformal superalgebra $\R$.

A deformation is said to be {\it trivial} if there is a linear operator $f\in C^{1}(\R,\R)_{\bar{0}}$ such that for
${T_t}_\lambda={\rm id}+tf_\lambda$, there holds
\begin{eqnarray}\label{1-5}
{T_t}_{-\partial}([a_\lambda b]_t)=[{({T_t}_\lambda(a))}_\lambda {T_t}_{-\partial}(b)], \ \forall \ a,b\in\R.
\end{eqnarray}
\begin{defn}\rm
A linear operator $f\in C^{1}(\R,\R)_{\bar{0}}$ is called a Nijienhuis operator if
\begin{eqnarray}\label{1-4}
[{(f_\lambda(a))}_\lambda (f_\mu(b))]=f_{\lambda+\mu}([a_\lambda b]_N), \ \forall \ a,b\in\R,
\end{eqnarray}
where the bracket $[\cdot_{\lambda}\cdot]_N$ is defined by
\begin{eqnarray}\label{N1}
[a_\lambda b]_N=[(f_\lambda(a))_\lambda b]+[a_\lambda(f_{-\partial}(b))]-f_{-\partial}([a_\lambda b]) \ \forall \ a,b\in\R.\end{eqnarray}
\end{defn}

\begin{re}\rm
In particular, by $(\rm C1)_\lambda$ and setting $\mu=-\partial-\lambda$ in Eq.\eqref{1-4}, we obtain
\begin{eqnarray}\label{4-10}
[{(f_\lambda(a))}_\lambda f_{-\partial}(b)]=f_{-\partial}([a_\lambda b]_N), \ \forall \ a,b\in\R.
\end{eqnarray}
\end{re}

\begin{thm} Let $\R$ be a Lie conformal superalgebra, and $f\in C^{1}(\R,\R)_{\bar{0}}$ a Nijienhuis operator. Then a deformation of $\R$ can be obtained by putting
\begin{eqnarray}\label{N2}
\psi_{\lambda,-\partial-\lambda}(a, b):=({\rm{\bf \hat{d}}}f)_{\lambda,-\partial-\lambda}(a, b)=[a_\lambda b]_{N},  \ \forall \ a,b\in\R.
\end{eqnarray}
Furthermore,  this deformation is trivial.
\end{thm}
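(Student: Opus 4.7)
The statement has two parts: that $\psi=\hat{\mathbf{d}}f$ generates a deformation, and that this deformation is trivial. Since $\psi$ is a $2$-coboundary and $\hat{\mathbf{d}}^{2}=0$, it is automatically a $2$-cocycle, so Eq.\eqref{defor1} holds for free. The first part therefore reduces to verifying the quadratic identity Eq.\eqref{defor2}, and the second part reduces to verifying Eq.\eqref{1-5} with $(T_t)_\lambda:=\mathrm{id}+t f_\lambda$. The key inputs for both verifications are the Nijienhuis relation Eq.\eqref{1-4} and its specialisation Eq.\eqref{4-10}.

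For the quadratic identity, I would use $\psi_{\lambda,-\partial-\lambda}(a,b)=[a_\lambda b]_N$ to reinterpret Eq.\eqref{defor2} as the super Jacobi identity for the Nijienhuis bracket. My plan is to expand $[a_\lambda[b_\mu c]_N]_N$ by substituting the definition Eq.\eqref{N1} twice, producing nine summands. Those involving doubly $f$-decorated brackets of the form $[f_\lambda(u)_\lambda f_{-\partial}(v)]$ I rewrite as $f_{-\partial}([u_\lambda v]_N)$ via Eq.\eqref{4-10}. The residual triple brackets $[f_\lambda(a)_\lambda[b_\mu c]]$, $[a_\lambda[f_\mu(b)_\mu c]]$, and so on are then reorganised using the super Jacobi identity $(\mathrm{C3})_\lambda$ of the original bracket. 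After collection, all contributions recombine into $(-1)^{|a||b|}[b_\mu[a_\lambda c]_N]_N+[{[a_\lambda b]_N}_{\lambda+\mu}c]_N$, which is exactly Eq.\eqref{defor2}.

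For triviality, I would expand both sides of Eq.\eqref{1-5} as polynomials in $t$. The $t^{0}$-terms agree trivially; matching the $t^{1}$-coefficients yields
\[
f_{-\partial}([a_\lambda b])+[a_\lambda b]_N=[f_\lambda(a)_\lambda b]+[a_\lambda f_{-\partial}(b)],
\]
which is precisely the definition Eq.\eqref{N1} of $[\cdot_\lambda\cdot]_N$; matching the $t^{2}$-coefficients yields
\[
f_{-\partial}([a_\lambda b]_N)=[f_\lambda(a)_\lambda f_{-\partial}(b)],
\]
which is exactly Eq.\eqref{4-10}. Hence Eq.\eqref{1-5} holds and the deformation is trivial.

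The main obstacle is the combinatorics of the quadratic step: tracking the $\mathbb{Z}_2$-signs through nested conformal brackets, and correctly pairing the cancellations produced by $(\mathrm{C3})_\lambda$ with those produced by the Nijienhuis relation, is where errors are most likely. Conceptually, one is verifying, in the $\lambda$-bracket $\mathbb{Z}_2$-graded setting, the classical fact that a Nijienhuis operator produces a second compatible Lie bracket on the same underlying module, and the triviality computation is then an essentially formal $t$-adic check.
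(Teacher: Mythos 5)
Your proposal is correct and follows essentially the same route as the paper: the cocycle condition comes for free from $\psi={\rm{\bf \hat{d}}}f$ and ${\rm{\bf \hat{d}}}^{2}=0$, the quadratic identity \eqref{defor2} is verified by expanding the nested Nijienhuis brackets and cancelling via Eq.\eqref{4-10} together with $(\rm C3)_\lambda$, and triviality is checked by matching the $t^{1}$- and $t^{2}$-coefficients of Eq.\eqref{1-5} against Eqs.\eqref{N1} and \eqref{4-10}. The paper's proof is exactly this computation carried out in detail.
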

\begin{proof}
Since $\psi={\rm{\bf \hat{d}}}f$,  ${\rm{\bf \hat{d}}}\psi=0$ is valid. To see that $\psi$ generates a deformation of $\R$,  we need to check Eq.\eqref{defor2} for $\psi$. By Eqs.\eqref{N1} and \eqref{N2}, we compute and get
\begin{eqnarray*}
&&\psi_{\lambda,-\partial-\lambda}(a, \psi_{\mu,-\partial-\mu}(b,c))=[a_\lambda[b_\mu c]_N]_N\\
&=&[(f_\lambda(a))_\lambda([b_\mu c]_N)]+[a_\lambda(f_{-\partial}([b_\mu c]_N))]-f_{-\partial}([a_\lambda[b_\mu c]_N])\\
&=&[(f_\lambda(a))_\lambda[(f_\mu(b))_\mu c]]+[(f_\lambda(a))_\lambda[b_\mu(f_{-\partial}(c))]]
-[(f_\lambda(a))_\lambda(f_{-\partial}([b_\mu c]))]+[a_\lambda(f_{-\partial}([b_\mu c]_N))]\\
&&-f_{-\partial}([a_\lambda[(f_\mu(b))_\mu c]])-f_{-\partial}([a_\lambda[b_\mu(f_{-\partial}(c))]])+f_{-\partial}([a_\lambda(f_{-\partial}([b_\mu c]))])\\
&=&\underbrace{[(f_\lambda(a))_\lambda[(f_\mu(b))_\mu c]]}_{(1)}+\underbrace{[(f_\lambda(a))_\lambda[b_\mu(f_{-\partial}(c))]]}_{(2)}
-[(f_\lambda(a))_\lambda(f_{-\partial}([b_\mu c]))]+\underbrace{[a_\lambda[(f_\mu(b))_\mu(f_{-\partial}(c))]]}_{(3)}\\
&&\underbrace{-f_{-\partial}([a_\lambda[(f_\mu(b))_\mu c]])}_{(4)}\underbrace{-f_{-\partial}([a_\lambda[b_\mu (f_{-\partial}(c))]])}_{(5)}+f_{-\partial}([a_\lambda(f_{-\partial}([b_\mu c]))]).
\end{eqnarray*}

In the same way, we have
\begin{eqnarray*}
&&(-1)^{|a||b|}\psi_{\mu,-\partial-\mu}(b, \psi_{\lambda,-\partial-\lambda}(a,c))\\
&=&\underbrace{(-1)^{|a||b|}[(f_\mu(b))_\mu[(f_\lambda(a))_\lambda c]]}_{(1)'}+\underbrace{(-1)^{|a||b|}[(f_\mu(b))_\mu[a_\lambda (f_{-\partial}(c))]]}_{(3)'}\\
&&-(-1)^{|a||b|}[(f_\mu(b))_\mu(f_{-\partial}([a_\lambda c]))]+\underbrace{(-1)^{|a||b|}[b_\mu[(f_\lambda(a))_\lambda(f_{-\partial}(c))]]}_{(2)'}\\
&&\underbrace{-(-1)^{|a||b|}f_{-\partial}([b_\mu[(f_\lambda(a))_\lambda c]])}_{(6)'}\underbrace{-(-1)^{|a||b|}f_{-\partial}([b_\mu[a_\lambda (f_{-\partial}(c))]])}_{(5)'}\\
&&+(-1)^{|a||b|}f_{-\partial}([b_\mu(f_{-\partial}([a_\lambda c]))])
\end{eqnarray*}
and
\begin{eqnarray*}
&&\psi_{\lambda+\mu,-\partial-\lambda-\mu}(\psi_{\lambda,-\partial-\lambda}(a,b),c)\\
&=&[(f_{\lambda+\mu}([a_\lambda b]_N))_{\lambda+\mu}c]+[{([a_\lambda b]_N)}_{\lambda+\mu}(f_{-\partial}(c))]-f_{-\partial}([{([a_\lambda b]_N)}_{\lambda+\mu)} c])\\
&=&\underbrace{[[(f_{\lambda}(a))_\lambda(f_{\mu}(b))]_{\lambda+\mu}c]}_{(1)''}\\
&&+\underbrace{[[(f_\lambda(a))_\lambda b]_{\lambda+\mu}(f_{-\partial}(c))]}_{(2)''}+\underbrace{[[a_\lambda (f_{-\partial}(b))]_{\lambda+\mu}(f_{-\partial}(c))]}_{(3)''}-[(f_{-\partial}([a_\lambda b]))_{\lambda+\mu}(f_{-\partial}(c))]\\
&&\underbrace{-f_{-\partial}([(f_\lambda(a))_\lambda b]_{\lambda+\mu}c])}_{(6)''}\underbrace{-f_{-\partial}([[a_\lambda (f_{-\partial}(b))]_{\lambda+\mu}}_{(4)''} c])+f_{-\partial}([(f_{-\partial}([a_\lambda b]))_{\lambda+\mu} c]).
\end{eqnarray*}

Since $f$ is a Nijienhuis operator and by Eq.\eqref{4-10}, we get
\begin{eqnarray*}
&&\ \ \ \ \,-[(f_\lambda(a))_\lambda(f_{-\partial}([b_\mu c]))]+f_{-\partial}([a_\lambda(f_{-\partial}([b_\mu c]))])\\
&&\ \ \ \ \ \ \ \ \ \ \ \ \ \ \ \ \ \ \ \,=\underbrace{-f_{-\partial}([(f_\lambda(a))_\lambda[b_\mu c]])}_{(6)}+\underbrace{f_{-\partial}^2([a_\lambda [b_\mu c]])}_{(7)},\\
&&\ \ \ \ \,-(-1)^{|a||b|}[(f_\mu(b))_\mu(f_{-\partial}([a_\lambda c]))]+(-1)^{|a||b|}f_{-\partial}([b_\mu(f_{-\partial}([a_\lambda c]))])\\
&&\ \ \ \ \ \ \ \ \ \ \ \ \ \ \ \ \ \ \ \,=\underbrace{-(-1)^{|a||b|}f_{-\partial}([(f_\mu(b))_\mu [a_\lambda c]])}_{(4)^{'}}
+\underbrace{(-1)^{|a||b|}f_{-\partial}^2([b_\mu [a_\lambda c]])}_{(7)^{'}}.
\end{eqnarray*}
By $(\rm C1)_\lambda$ and Eq.\eqref{4-10},
\begin{eqnarray*}
&&-[(f_{-\partial}([a_\lambda b]))_{\lambda+\mu}(f_{-\partial}(c))]+f_{-\partial}([(f_{-\partial}([a_\lambda b]))_{\lambda+\mu} c])\\
&=&-[(f_{\lambda+\mu}([a_\lambda b]))_{\lambda+\mu}(f_{-\partial}(c))]+f_{-\partial}([(f_{\lambda+\mu}([a_\lambda b]))_{\lambda+\mu} c])\\
&=&\underbrace{-f_{-\partial}([[a_\lambda b]_{\lambda+\mu}(f_{-\partial}(c))])}_{(5)^{''}}+\underbrace{f_{-\partial}^2([[a_\lambda b]_{\lambda+\mu}c])}_{(7)^{''}}.
\end{eqnarray*}

Note that according to the Jacobi identity and $(\rm C1)_\lambda$ for $a ,b , c\in\R$,
$$[a_\lambda[(f_\mu(b))_\mu(f_{-\partial}(c))]]=[[a_\lambda f_\mu(b)]_{\lambda+\mu}(f_{-\partial}(c))]+(-1)^{|a||b|}[(f_\mu(b))_\mu[a_\lambda (f_{-\partial}(c))]]$$
is equivalent to
\begin{eqnarray*}[a_\lambda[(f_\mu(b))_\mu(f_{-\partial}(c))]]=[[a_\lambda f_{-\partial}(b)]_{\lambda+\mu}(f_{-\partial}(c))]+(-1)^{|a||b|}[(f_\mu(b))_\mu[a_\lambda(f_{-\partial}(c))]].\end{eqnarray*}
Thus $(i)+(i)^{'}+(i)^{''}=0$, for $i=1,\cdots,7$. This proves that
$\psi$ generates a deformation of the Lie conformal superalgebra $\R$.

Let ${T_{t}}_\lambda=\id+tf_\lambda$. By Eqs.\eqref{1-3} and \eqref{N2},
\begin{eqnarray}\label{1-1}
{T_{t}}_{-\partial}([a_\lambda b]_{t})
&=&(\id+tf_{-\partial})([a_\lambda b]+t\psi_{\lambda,-\partial-\lambda}(a,b))\nonumber\\
&=&(\id+tf_{-\partial})([a_\lambda b]+t[a_\lambda b]_{N})\nonumber\\
&=&[a_\lambda b]+t([a_\lambda b]_{N}+f_{-\partial}([a_\lambda b]))+t^{2}f_{-\partial}([a_\lambda b]_{N}).
\end{eqnarray}
Furthermore,
\begin{eqnarray}\label{1-2}
[({T_{t}}_\lambda(a))_\lambda {T_{t}}_{-\partial}(b)]
&=&[(a+tf_\lambda(a))_\lambda(b+tf_{-\partial}(b))]\nonumber\\
&=&[a_\lambda b]+t([(f_\lambda(a))_\lambda b]+[a_\lambda(f_{-\partial}(b))])+t^{2}[(f_\lambda(a))_\lambda f_{-\partial}(b)].
\end{eqnarray}
Combining Eqs.\eqref{1-1} with \eqref{1-2} gives
${T_{t}}_{-\partial}([a_\lambda b]_{t})=[({T_{t}}_\lambda(a))_\lambda {T_{t}}_{-\partial}(b)].$
Therefore the deformation is trivial.
\end{proof}

\section{Generalized derivations of Lie conformal superalgebras}

\par

Let $\R$ be a Lie conformal superalgebra. Define $\texttt{\Der}(\R)_\theta$ is the set of all derivations of degree $\theta$, then it is obvious that $\texttt{\Der}(\R)=\texttt{\Der}(\R)_{\bar{0}}\oplus\texttt{\Der}(\R)_{\bar{1}}$ is a subalgebra of $\Cend(\R)$.

\begin{defn}\rm An element $f$ in $\Cend(\R)_\theta$ is called
\begin{itemize}
\item a {\it generalized derivation} of degree $\theta$ of $\R$, if there exist $f^{'},f^{''}\in\Cend(\R)_\theta$ such that
\begin{eqnarray}\label{5-1}
[(f_\lambda(a))_{\lambda+\mu}b]+(-1)^{\theta|a|}[a_\mu(f^{'}_\lambda(b))]=f^{''}_\lambda([a_\mu b]), \ \forall \ a,b\in\R.
\end{eqnarray}
\item an {\it quasiderivation }of degree $\theta$ of $\R$, if there is $f^{'}\in\Cend(\R)_\theta$ such that
\begin{eqnarray}\label{5-2}
[(f_\lambda(a))_{\lambda+\mu}b]+(-1)^{\theta|a|}[a_\mu(f_\lambda(b))]=f^{'}_\lambda([a_\mu b]), \ \forall \ a,b\in\R.
\end{eqnarray}
\item an {\it centroid} of degree $\theta$ of $\R$, if it satisfies
\begin{eqnarray}\label{5-3}
 [(f_\lambda(a))_{\lambda+\mu}b]=(-1)^{\theta|a|}[a_\mu(f_\lambda(b))]=f_\lambda([a_\mu b]), \ \forall \ a,b\in\R.
\end{eqnarray}
\item an {\it quasicentroid} of degree $\theta$ of $\R$, if it satisfies
\begin{eqnarray}\label{5-4}
[(f_\lambda(a))_{\lambda+\mu}b]=(-1)^{\theta|a|}[a_\mu(f_\lambda(b))], \ \forall \ a,b\in\R.
\end{eqnarray}
\item an {\it central derivation} of degree $\theta$ of $\R$, if it satisfies
\begin{eqnarray}\label{5-5}
[(f_\lambda(a))_{\lambda+\mu}b]=f_\lambda([a_\mu b])=0, \ \forall \ a,b\in\R.
\end{eqnarray}
\end{itemize}
\end{defn}

Denote by $\GDer(\R)_\theta$, $\QDer(\R)_\theta$, $\C(\R)_\theta$, $\QC(\R)_\theta$ and $\ZDer(\R)_\theta$ the sets of all generalized derivations,
 quasiderivations, centroids, quasicentroids and central derivations of degree $\theta$ of $\R$.
It is easy to see that
\begin{eqnarray}
\ZDer(\R)\subseteq \Der(\R)\subseteq \QDer(\R)\subseteq \GDer(\R)\subseteq \Cend(\R),\,
\C(\R)\subseteq \QC(\R)\subseteq \GDer(\R).\nonumber\\ \label{tower}
\end{eqnarray}

\begin{prop}Let $\R$ be a Lie conformal superalgebra. Then
\begin{enumerate}
\item[$(1)$] $\GDer(\R)$, $\QDer(\R)$ and $\C(\R)$ are subalgebras of $\Cend(\R)$.
\item[$(2)$] $\ZDer(\R)$ is an ideal of $\Der(\R)$.
\end{enumerate}
\end{prop}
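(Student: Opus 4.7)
The plan is to verify, in each part, closure under the $\lambda$-bracket on $\Cend(\R)$ given by (\ref{def2-5}), namely $[f_\lambda g]_\mu = f_\lambda g_{\mu-\lambda} - (-1)^{|f||g|} g_{\mu-\lambda} f_\lambda$. Closure under the $\mathbb{C}[\partial]$-action and under $\mathbb{C}$-linear combinations is automatic from the definitions (witnesses add and rescale in the obvious way), so the content lies in bracket-closure.

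For $(1)$, take $f \in \GDer(\R)_{\theta_1}$, $g \in \GDer(\R)_{\theta_2}$ with witnesses $(f',f'')$ and $(g',g'')$ as in (\ref{5-1}). I would guess that, for each fixed $\lambda$, the element $[f_\lambda g]_\mu \in \Cend(\R)$ is a generalized derivation with witnesses $[f'_\lambda g']_\mu$ and $[f''_\lambda g'']_\mu$. To verify this I would expand
\begin{equation*}
[\,[f_\lambda g]_\mu(a)\,{}_{\mu+\sigma}\,b\,] = [(f_\lambda g_{\mu-\lambda}(a))_{\mu+\sigma}b] - (-1)^{\theta_1\theta_2}[(g_{\mu-\lambda} f_\lambda(a))_{\mu+\sigma}b]
\end{equation*}
and apply (\ref{5-1}) twice to each summand (once for $f$ and once for $g$, in opposite orders), collecting the four resulting terms and recognising the combinations $f'_\lambda g'_{\mu-\lambda} - (-1)^{|f'||g'|} g'_{\mu-\lambda} f'_\lambda$ and $f''_\lambda g''_{\mu-\lambda} - (-1)^{|f''||g''|} g''_{\mu-\lambda} f''_\lambda$. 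The $\QDer(\R)$ case is the specialization $f'=f''$, $g'=g''$. For $\C(\R)$, I would check directly that if $f,g$ both slide through brackets as in (\ref{5-3}), then so does $[f_\lambda g]_\mu$, by moving $f_\lambda$ and $g_{\mu-\lambda}$ past the bracket one at a time.

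For $(2)$, I would first note $\ZDer(\R) \subseteq \Der(\R)$: for $f \in \ZDer(\R)$ the derivation identity (\ref{defa}) reduces to $[a_\mu f_\lambda(b)]=0$, which follows from $(\mathrm{C2})_\lambda$ applied to $a$ and $f_\lambda(b)$, since the resulting bracket $[(f_\lambda b)_{-\partial-\mu}a]$ is killed by (\ref{5-5}). Then, for $d \in \Der(\R)$ and $f \in \ZDer(\R)$, I compute
\begin{equation*}
[d_\lambda f]_\mu([a_\sigma b]) = d_\lambda(f_{\mu-\lambda}([a_\sigma b])) - (-1)^{|d||f|}f_{\mu-\lambda}(d_\lambda([a_\sigma b])),
\end{equation*}
noting that the first term vanishes because $f_{\mu-\lambda}$ annihilates every bracket, and the second term vanishes because expanding $d_\lambda([a_\sigma b])$ by the derivation rule yields a sum of brackets in $\R$, each of which is then killed by $f_{\mu-\lambda}$. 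A parallel calculation, using the centrality identity $[(f_{\mu-\lambda}(x))_\nu y]=0$ applied to $x = a$ and $x = d_\lambda a$, gives $[[d_\lambda f]_\mu(a)_{\mu+\sigma}b]=0$. Hence $[d_\lambda f]_\mu \in \ZDer(\R)$.

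The main obstacle will be the bookkeeping in the generalized-derivation case, where a single expansion produces many terms whose recombination into $[f'_\lambda g']$- and $[f''_\lambda g'']$-shapes demands careful tracking of the super signs $(-1)^{\theta_i|a|}$ and $(-1)^{\theta_1\theta_2}$ as well as of the three spectral parameters $\lambda,\mu,\sigma$. Once these substitutions are laid out in parallel, the cancellations are forced by the formula (\ref{def2-5}) for the bracket in $\Cend(\R)$, and each claim reduces to an inspection of matching terms.
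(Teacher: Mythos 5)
Your plan coincides with the paper's proof: for $(1)$ the paper likewise takes $[f'_\lambda g']$ and $[f''_\lambda g'']$ as the witnesses for $[f_\lambda g]$, expands $[([f_\lambda g]_\theta(a))_{\mu+\theta}b]$ via \eqref{def2-5} and applies \eqref{5-1} twice to reach \eqref{5-6}, and for $(2)$ it kills $[f_\lambda g]_\theta([a_\mu b])$ and $[([f_\lambda g]_\theta(a))_{\mu+\theta}b]$ term by term using \eqref{5-5} and the derivation rule, exactly as you describe. The only cosmetic differences are the order of the bracket in $(2)$ and your explicit check that $\ZDer(\R)\subseteq\Der(\R)$, which the paper simply records in the inclusion chain \eqref{tower}.
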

\begin{proof} $(1)$  We only prove that $\GDer(\R)$ is a subalgebra of $\Cend(\R)$. The proof for the other two cases is exactly analogous.

For $f\in\GDer(\R)_\sigma, g\in\GDer(\R)_\vartheta$, $a,b\in \R$, there exist $f^{'},f^{''}\in\GDer(\R)_\sigma$  (resp. $g^{'},g^{''}\in\GDer(\R)_\vartheta$ ) such that Eq.\eqref{5-1} holds for $f$ (resp. $g$).
We only need to show
\begin{eqnarray}\label{5-6}
[f^{''}_\lambda g^{''}]_\theta([a_\mu b])=[([f_\lambda g]_\theta(a))_{\mu+\theta}b]+(-1)^{(\sigma+\vartheta)|a|}[a_\mu([f^{'}_\lambda g^{'}]_\theta(b))].
\end{eqnarray}

By Eq.\eqref{def2-5}, we have
\begin{eqnarray}
[([f_\lambda g]_\theta(a))_{\mu+\theta}b]
=[(f_\lambda(g_{\theta-\lambda}(a)))_{\mu+\theta}b]-(-1)^{\sigma\vartheta}[(g_{\theta-\lambda}(f_\lambda(a)))_{\mu+\theta}b]. \label{5-7}
\end{eqnarray}

By Eq.\eqref{5-1}, we obtain
\begin{eqnarray}
&&[(f_\lambda(g_{\theta-\lambda}(a)))_{\mu+\theta}b]\nonumber\\
&=&f^{''}_\lambda([(g_{\theta-\lambda}(a))_{\mu+\theta-\lambda}b])-(-1)^{\sigma(\vartheta+|a|)}[(g_{\theta-\lambda}(a))_{\mu+\theta-\lambda}(f^{'}_\lambda(b))]\nonumber\\
&=&f^{''}_\lambda(g^{''}_{\theta-\lambda}([a_\mu b]))-(-1)^{\vartheta|a|}f^{''}_\lambda([a_\mu (g^{'}_{\theta-\lambda}(b))])
\nonumber\\&&-(-1)^{\sigma(\vartheta+|a|)}g^{''}_{\theta-\lambda}([a_\mu(f^{'}_\lambda(b))])+(-1)^{\sigma(\vartheta+|a|)
+\vartheta|a|}[a_\mu(g^{'}_{\theta-\lambda}(f^{'}_\lambda(b)))],\label{5-8}\\
&&[(g_{\theta-\lambda}(f_\lambda(a)))_{\mu+\theta}b]\nonumber\\
&=&g^{''}_{\theta-\lambda}([(f_\lambda(a))_{\lambda+\mu}b])-(-1)^{\vartheta(\sigma+|a|)}[f_\lambda(a)_{\lambda+\mu}(g^{'}_{\theta-\lambda}(b))]\nonumber\\
&=&g^{''}_{\theta-\lambda}(f^{''}_\lambda([a_\mu b]))-(-1)^{\sigma|a|}g^{''}_{\theta-\lambda}([a_\mu(f^{'}_\lambda(b))])\nonumber\\
&&-(-1)^{\vartheta(\sigma+|a|)}f^{''}_\lambda([a_\mu(g^{'}_{\theta-\lambda}(b))])+(-1)^{\vartheta(\sigma+|a|)
+\sigma|a|}[a_\mu(f^{'}_\lambda(g^{'}_{\theta-\lambda}(b))).\label{5-9}
\end{eqnarray}

Substituting Eqs.\eqref{5-8} and \eqref{5-9} into Eq.\eqref{5-7} gives Eq.\eqref{5-6}. Hence $[f_\lambda g]\in\GDer(\R)[\lambda]$,
and $\GDer(\R)$ is a subalgebra of $\Cend(\R)$.

(2) For $f\in\ZDer(\R), g\in\Der(\R)$, and $a, b \in\R$, by Eq.\eqref{5-5}, we have
\begin{eqnarray*}
[f_\lambda g]_\theta([a_\mu b])
&=&f_\lambda(g_{\theta-\lambda}([a_\mu b]))-(-1)^{|f||g|}g_{\theta-\lambda}(f_\lambda([a_\mu b]))=f_\lambda(g_{\theta-\lambda}([a_\mu b]))\\
&=&f_\lambda([(g_{\theta-\lambda}(a))_{\mu+\theta-\lambda}b]+(-1)^{|a||g|}[a_\mu (g_{\theta-\lambda}(b))])=0,\\
{[[f_\lambda g]_\theta(a)_{\mu+\theta}b]}
&=&[(f_\lambda(g_{\theta-\lambda}(a))-(-1)^{|f||g|}g_{\theta-\lambda}(f_\lambda(a)))_{\mu+\theta}b]\\
&=&[-(-1)^{|f||g|}(g_{\theta-\lambda}(f_\lambda a)))_{\mu+\theta}b]\\
&=&-(-1)^{|f||g|}g_{\theta-\lambda}([f_\lambda(a)_{\lambda+\mu}b])+(-1)^{|g||a|}[f_\lambda(a)_{\lambda+\mu}g_{\theta-\lambda}(b)]\\
&=&0.
\end{eqnarray*}
This shows that
$[f_\lambda g] \in\ZDer(\R)[\lambda]$. Thus $\ZDer(\R)$ is an ideal of $\Der(\R)$.
\end{proof}

\begin{lem} \label{lemm5-1}
Let $\R$ be a Lie conformal superalgebra. Then
\begin{enumerate}
\item[$(1)$] $[\Der(\R)_\lambda\C(\R)]\subseteq\C(\R)[\lambda]$,
\item[$(2)$] $[\QDer(\R)_\lambda\QC(\R)]\subseteq\QC(\R)[\lambda]$,
\item[$(3)$] $[\QC(\R)_\lambda\QC(\R)]\subseteq\QDer(\R)[\lambda]$.
\end{enumerate}
\end{lem}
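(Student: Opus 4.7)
The plan for all three inclusions is the same. Given $f$ of degree $\sigma$ in the first listed class and $g$ of degree $\vartheta$ in the second, I will expand
\[
[f_\lambda g]_\theta(a) = f_\lambda(g_{\theta-\lambda}(a)) - (-1)^{\sigma\vartheta}\, g_{\theta-\lambda}(f_\lambda(a))
\]
via Eq.\eqref{def2-5} and substitute this into the defining identity of the target class, evaluated on elements $a,b\in\R$ with bracket parameter $\mu$. Each of the two resulting summands contains a subexpression of the form $[(f_\lambda x)_{\lambda+\nu}b]$ or $[(g_{\theta-\lambda}x)_{\theta-\lambda+\nu}b]$, to which the identity enjoyed by $f$ (resp.\ $g$) applies. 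After pushing $f$ and $g$ past the outer bracket in two different orders, cross terms should cancel pairwise and the surviving terms should reassemble into the right-hand side of the target identity.

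For (1), using Eq.\eqref{defa} for $f$ and Eq.\eqref{5-3} for $g$, the expansion of $[([f_\lambda g]_\theta(a))_{\theta+\mu} b]$ produces four summands: two of them have the form $\pm(-1)^{\sigma(\vartheta+|a|)}[a_\mu(g_{\theta-\lambda}(f_\lambda b))]$ and cancel because their Koszul exponents agree modulo $2$, while the remaining two assemble into $[f_\lambda g]_\theta([a_\mu b])$. A symmetric manipulation yields the mirror equality $[([f_\lambda g]_\theta a)_{\theta+\mu}b]=(-1)^{(\sigma+\vartheta)|a|}[a_\mu([f_\lambda g]_\theta b)]$, completing the centroid identity and placing $[f_\lambda g]$ in $\C(\R)[\lambda]$.

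For (2), the same procedure with $f$ a quasiderivation (associated operator $f'$ from Eq.\eqref{5-2}) and $g$ a quasicentroid (Eq.\eqref{5-4}) produces two mirror terms proportional to $f'_\lambda([a_\mu(g_{\theta-\lambda}b)])$ carrying opposite signs, which cancel, leaving exactly the quasicentroid identity for $[f_\lambda g]_\theta$. For (3), taking both $f$ and $g$ to be quasicentroids and invoking Eq.\eqref{5-4} once for $f$ and once for $g$ inside each of the two summands of $[f_\lambda g]_\theta(a)$ immediately yields
\[
[([f_\lambda g]_\theta(a))_{\theta+\mu} b] + (-1)^{(\sigma+\vartheta)|a|}[a_\mu([f_\lambda g]_\theta(b))] = 0,
\]
so $[f_\lambda g]$ is a quasiderivation with the zero operator as its associated $f'$, hence $[f_\lambda g]\in\QDer(\R)[\lambda]$.

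The main obstacle I anticipate is Koszul sign bookkeeping: pushing $f_\lambda$ past a bracket whose first argument has degree $\vartheta+|a|$ generates a sign $(-1)^{\sigma(\vartheta+|a|)}$, and at each stage one must verify that the two terms that are supposed to cancel really carry the same sign rather than doubling up. My plan is to normalize every exponent to the canonical form $\sigma\vartheta+(\sigma+\vartheta)|a|$ before comparing, after which the cancellations become transparent; beyond this the calculations are routine applications of the defining identities and pose no further conceptual difficulty.
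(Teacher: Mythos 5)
Your plan is correct: the paper itself dismisses this lemma with ``It is straightforward,'' and your direct verification --- expanding $[f_\lambda g]_\theta$ via Eq.~\eqref{def2-5}, applying the defining identity of each factor inside the two summands, and checking that the cross terms cancel after normalizing the Koszul exponents --- is exactly the intended argument; indeed your computation for $(3)$ reproduces the cancellation the authors carry out explicitly in Eq.~\eqref{5-16}. The sign bookkeeping you flag does work out (e.g.\ in $(2)$ the two $f'_\lambda([a_\mu(g_{\theta-\lambda}(b))])$ terms carry coefficients $(-1)^{\vartheta|a|}$ and $-(-1)^{\vartheta|a|}$ and cancel), so no gap remains.
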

\begin{proof} It is straightforward. \end{proof}

\begin{thm}
Let $\R$ be a Lie conformal superalgebra. Then $$\GDer(\R)=\QDer(\R)+\QC(\R).$$
\end{thm}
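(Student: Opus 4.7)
The plan is to prove the two inclusions separately. The containment $\QDer(\R) + \QC(\R) \subseteq \GDer(\R)$ is immediate from the tower of inclusions in Eq.~\eqref{tower}, so the real content is $\GDer(\R) \subseteq \QDer(\R) + \QC(\R)$.

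Given $f \in \GDer(\R)_\theta$, the definition supplies $f', f'' \in \Cend(\R)_\theta$ satisfying Eq.~\eqref{5-1}:
\begin{equation*}
[(f_\lambda a)_{\lambda+\mu}b] + (-1)^{\theta|a|}[a_\mu(f'_\lambda b)] = f''_\lambda[a_\mu b], \qquad \forall\, a,b \in \R. \tag{$\ast$}
\end{equation*}
Adapting the classical Leger--Luks argument from \cite{L}, I would set
\[
g := \tfrac{1}{2}(f+f') \quad\text{and}\quad h := \tfrac{1}{2}(f-f'),
\]
so that $f = g+h$, and then aim to show $g \in \QDer(\R)_\theta$ (with companion $f''$) and $h \in \QC(\R)_\theta$.

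The key auxiliary step is to extract a \emph{companion identity} to ($\ast$) in which the roles of $f$ and $f'$ on the left-hand side are exchanged. Concretely, I would first relabel $a \leftrightarrow b$ in ($\ast$), then apply conformal skew-symmetry $(\mathrm{C2})_\lambda$ to each of the three brackets to bring them back into the form $[a\,_?\,b]$, and finally use the substitution $\mu \mapsto -\partial-\mu$ together with $(\mathrm{C1})_\lambda$ (exactly the trick already employed in the proofs of Lemmas~\ref{lemm2-16} and \ref{lemm2-17}) to absorb the $\partial$-shifts back into the appropriate indices. The result is a ``dual'' of ($\ast$) with $f$ and $f'$ swapped on the left, while the right-hand side still involves $f''_\lambda$ acting on $[a_\mu b]$. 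Adding this dual to ($\ast$) and dividing by $2$ yields the quasi-derivation identity for $g$ with partner $f''$, while subtracting and dividing by $2$ leaves precisely the quasi-centroid identity for $h$.

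The main obstacle is the bookkeeping in this companion-identity step. In an ordinary (super) Lie algebra one simply uses antisymmetry $[y,x] = -(-1)^{|x||y|}[x,y]$ and the right-hand side of the swapped equation returns to $f''([a,b])$ unchanged; in the conformal setting $(\mathrm{C2})_\lambda$ shifts the bracket variable by $-\partial$, producing expressions such as $f''_\lambda[a_{-\partial-\mu}b]$ and $[a_{-\partial-\lambda-\mu}(f_\lambda b)]$ that must be carefully converted back to the standard indices $\mu$ and $\lambda+\mu$. Managing the superalgebra sign factors $(-1)^{\theta|a|}$, $(-1)^{|a||b|}$ and verifying that everything recombines to give the clean quasi-derivation and quasi-centroid identities is the heart of the computation; once this is done, the decomposition $f = g + h \in \QDer(\R) + \QC(\R)$ follows at once.
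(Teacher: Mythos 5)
Your proposal is correct and follows essentially the same route as the paper: the decomposition $f=\tfrac{1}{2}(f+f')+\tfrac{1}{2}(f-f')$, the companion identity obtained by swapping $a$ and $b$, applying $(\mathrm{C2})_\lambda$, and substituting $\mu\mapsto-\partial-\lambda-\mu'$ via $(\mathrm{C1})_\lambda$, and then adding/subtracting to land in $\QDer(\R)$ and $\QC(\R)$ respectively are exactly the steps in the paper's proof. The only cosmetic difference is that the paper justifies the easy inclusion by citing Eq.~\eqref{tower} together with Lemma~\ref{lemm5-1}, whereas you correctly note it already follows from the linearity of the defining identity in $(f,f',f'')$.
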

\begin{proof} For $f\in\GDer(\R)_\theta$, there exist $f^{'}, f^{''}\in\Cend(\R)_\theta$ such that
\begin{eqnarray}\label{5-10}
[(f_\lambda(a))_{\lambda+\mu}b]+(-1)^{\theta|a|}[a_\mu(f^{'}_\lambda(b))]=f^{''}_\lambda([a_\mu b]), \forall \, a,b\in\R.
\end{eqnarray}
By $(\rm C2)_\lambda$ and Eq.(\ref{5-10}), we get
\begin{eqnarray}
(-1)^{\theta|b|}[b_{-\partial-\lambda-\mu}(f_\lambda(a))]+[(f^{'}_\lambda(b))_{-\partial-\mu}a]=f^{''}_\lambda([b_{-\partial-\mu}a]).\label{represen6}
\end{eqnarray}
By $(\rm C1)_\lambda$ and setting $\mu=-\partial-\lambda-\mu'$ in Eq.(\ref{represen6}), we obtain
\begin{eqnarray}
(-1)^{\theta|b|}[b_{\mu'}(f_\lambda(a))]+[(f^{'}_\lambda(b))_{\lambda+\mu'}a]=f^{''}_\lambda([b_{\mu'}a]).\label{5-11}
\end{eqnarray}
Then, changing the place of $a,b$ and replacing $\mu'$ by $\mu$ in Eq.(\ref{5-11}) give
\begin{eqnarray}
(-1)^{\theta|a|}[a_\mu(f_\lambda(b))]+[(f^{'}_\lambda(a)))_{\lambda+\mu}b]=f^{''}_\lambda([a_\mu b]).\label{5-12}
\end{eqnarray}

Combining Eqs.(\ref{5-10}) and (\ref{5-12}) gives
\begin{eqnarray*}
&&[(\frac{f_\lambda+f^{'}_\lambda}{2}(a))_{\lambda+\mu}b]+(-1)^{\theta|a|}[a_{\mu}(\frac{f_\lambda+f^{'}_\lambda}{2}(b))]=f^{''}_\lambda([a_\mu b]),\\
&&[(\frac{f_\lambda-f^{'}_\lambda}{2}(a))_{\lambda+\mu}b]-(-1)^{\theta|a|}[a_{\mu}(\frac{f_\lambda-f^{'}_\lambda}{2}(b))]=0.
\end{eqnarray*}
It follows that $\frac{f_\lambda+f^{'}_\lambda}{2}\in\QDer(\R)$ and $\frac{f_\lambda-f^{'}_\lambda}{2}\in\QC(\R)$. Hence
$$f_\lambda=\frac{f_\lambda+f^{'}_\lambda}{2}+\frac{f_\lambda-f^{'}_\lambda}{2}\in\QDer(\R)+\QC(\R),$$
proving that $\GDer(\R)\subseteq\QDer(\R)+\QC(\R).$  The reverse inclusion relation follows from Eq.\eqref{tower} and Lemma \ref{lemm5-1}.
\end{proof}

\begin{thm}
Let $\R$ be a Lie conformal superalgebra and $\Z(\R)$ the center of $\R$. Then
$[\C(\R)_\lambda\QC(\R)]\subseteq\Chom(\R,\Z(\R))[\lambda]$. Moreover, if $\Z(\R)=0$, then $[\C(\R)_\lambda\QC(\R)]=0$.
\end{thm}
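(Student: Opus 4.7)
Fix homogeneous $f\in\C(\R)_\sigma$ and $g\in\QC(\R)_\vartheta$ of total degree $\theta=\sigma+\vartheta$. To show $[f_\lambda g]_\theta\in\Chom(\R,\Z(\R))[\lambda]$ it suffices to prove that for every homogeneous $a,b\in\R$,
\begin{equation*}
\bigl[\bigl([f_\lambda g]_\theta(a)\bigr)_{\mu}\,b\bigr]=0.
\end{equation*}
Once this is established, the second assertion is immediate: if $\Z(\R)=0$, the only element mapped into $\Z(\R)$ is $0$, so the whole bracket $[f_\lambda g]$ vanishes. Thus the entire task reduces to a direct computation that combines the definition of the conformal commutator $[f_\lambda g]_\theta=f_\lambda g_{\theta-\lambda}-(-1)^{\sigma\vartheta}g_{\theta-\lambda}f_\lambda$ with the relations \eqref{5-3} and \eqref{5-4}.

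The plan is to compute each of the two summands
$[(f_\lambda g_{\theta-\lambda}(a))_{\mu}b]$ and $[(g_{\theta-\lambda}f_\lambda(a))_{\mu}b]$
separately by transporting $f$ and $g$ across the bracket using \eqref{5-3}, \eqref{5-4}, and conformal sesquilinearity $(\mathrm{C}1)_\lambda$ to align the $\lambda$-parameters. For the first summand, I would first apply the centroid property \eqref{5-3} to $f$ (with $x=g_{\theta-\lambda}(a)$, using $[(f_\lambda x)_{\mu}b]=f_\lambda([x_{\mu-\lambda}b])$), then the quasicentroid identity \eqref{5-4} for $g$ to convert $[g_{\theta-\lambda}(a)_{\mu-\lambda}b]$ into $(-1)^{\vartheta|a|}[a_{\mu-\theta}\,g_{\theta-\lambda}(b)]$, and finally the centroid property on $f$ once more to land at $(-1)^{(\sigma+\vartheta)|a|}[a_{\mu-\theta}\,(f_\lambda g_{\theta-\lambda})(b)]$. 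For the second summand, the strategy is symmetric: apply \eqref{5-4} to $g$ first (now with $y=f_\lambda(a)$, whose degree is $\sigma+|a|$), then the centroid identity on $f$ twice. The result will be $(-1)^{\vartheta(\sigma+|a|)+\sigma|a|}[a_{\mu-\theta}(f_\lambda g_{\theta-\lambda})(b)]$.

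Taking the difference of the two expressions, the prefactors reduce modulo $2$ to
$(-1)^{(\sigma+\vartheta)|a|}-(-1)^{\sigma\vartheta+\vartheta(\sigma+|a|)+\sigma|a|}=(-1)^{(\sigma+\vartheta)|a|}-(-1)^{2\sigma\vartheta+(\sigma+\vartheta)|a|}=0,$
because $2\sigma\vartheta\equiv0$. This proves $[([f_\lambda g]_\theta(a))_\mu b]=0$, so that $[f_\lambda g]_\theta(a)\in\Z(\R)$ for every $a\in\R$, establishing the inclusion $[\C(\R)_\lambda\QC(\R)]\subseteq\Chom(\R,\Z(\R))[\lambda]$. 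The ``moreover'' clause follows at once by setting $\Z(\R)=0$.

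The only real obstacle is bookkeeping: each of the two terms requires three successive rewrites, and at every step one has to translate the $\lambda$-argument of the outer bracket into the parameter expected by the centroid/quasicentroid axiom (e.g.\ converting $[(f_\lambda x)_\mu b]$ into the form $[(f_\lambda x)_{\lambda+(\mu-\lambda)}b]$ before applying \eqref{5-3}), while tracking a $\mathbb{Z}_2$-grading sign that picks up a factor of $(-1)^{\sigma|a|}$ or $(-1)^{\vartheta|a|}$ with each move. Provided one keeps the degrees of the intermediate elements $g_{\theta-\lambda}(a)$ (degree $\vartheta+|a|$) and $f_\lambda(a)$ (degree $\sigma+|a|$) straight, the signs line up exactly as claimed above.
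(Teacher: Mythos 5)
Your proof is correct and takes essentially the same route as the paper: expand $[f_\lambda g]_\theta$ into its two composition terms and transport $f$ (via the centroid identity \eqref{5-3}) and $g$ (via the quasicentroid identity \eqref{5-4}) across the bracket until the two terms cancel. The paper stops one rewrite earlier---after factoring $f_\lambda$ out of both terms it applies the quasicentroid identity for $g$ once to see that the expression inside vanishes---but this differs from your fully symmetrized sign cancellation only cosmetically.
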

\begin{proof}
For $f\in\C(\R),g\in\QC(\R)$, and $a,b\in\R$,
by Eqs.\eqref{5-3}and \eqref{5-4}, we have
\begin{eqnarray*}
[([f_\lambda g]_\theta(a))_{\mu+\theta}b]
&=&[(f_\lambda(g_{\theta-\lambda}(a)))_{\mu+\theta}b]
-(-1)^{|f||g|}[(g_{\theta-\lambda}(f_\lambda(a)))_{\mu+\theta}b]\\
&=&f_\lambda([(g_{\theta-\lambda}(a))_{\mu+\theta-\lambda}b])
-(-1)^{|f||g|+|g|(|f|+|a|)}[(f_\lambda(a))_{\lambda+\mu}(g_{\theta-\lambda}(b))]\\
&=&f_\lambda([(g_{\theta-\lambda}(a))_{\mu+\theta-\lambda}b])
-(-1)^{|g||a|}f_\lambda([a_\mu(g_{\theta-\lambda}(b))])\\
&=&f_\lambda([(g_{\theta-\lambda}(a))_{\mu+\theta-\lambda}b]
-(-1)^{|g||a|}[a_\mu(g_{\theta-\lambda}(b))])\\
&=&0.
\end{eqnarray*}
Hence $[f_\lambda g](a)\in \Z(\R)[\lambda]$, and then $[f_\lambda g]\in\Chom(\R,\Z(\R))[\lambda]$. If $\Z(\R)=0$, then $[f_\lambda g](a)=0$, $\forall$ $a\in\hg(\R).$ Thus $[\C(\R)_\lambda\QC(\R)]=0$.
\end{proof}

\begin{lem}\label{lemm5-6}
Let $\R$ be a Lie conformal superalgebra. If $f\in\QC(\R)_\theta \cap\QDer(\R)_\theta$ and $f,f'$ satisfy {\rm Eq.\eqref{5-2}}, then we have
\begin{eqnarray*}
[a_\lambda(f_\gamma([b_\mu c]))]=[a_\lambda[(f_\gamma(b))_{\mu+\gamma}c]]=(-1)^{\theta|b|}[a_\lambda[b_\mu(f_\gamma(c))]].
\end{eqnarray*}
\end{lem}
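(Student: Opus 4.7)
The second equality follows immediately from the quasicentroid relation~\eqref{5-4} applied to $b$ and $c$: it gives $[(f_\gamma(b))_{\gamma+\mu}c]=(-1)^{\theta|b|}[b_\mu(f_\gamma(c))]$, and bracketing both sides with $a_\lambda$ finishes this part. I concentrate on the first equality, which is the substantive one.

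Since $f$ lies in both $\QC(\R)_\theta$ and $\QDer(\R)_\theta$, subtracting~\eqref{5-4} from~\eqref{5-2} yields the amplified identity
\[
f'_\gamma([x_\nu y])\;=\;2[(f_\gamma(x))_{\gamma+\nu}y]\;=\;2(-1)^{\theta|x|}[x_\nu(f_\gamma(y))],\qquad\forall\,x,y\in\R.
\]
I will apply this in two ways. On one hand, taking $x=a$, $\nu=\lambda$, $y=[b_\mu c]$ gives $f'_\gamma([a_\lambda[b_\mu c]])=2(-1)^{\theta|a|}[a_\lambda f_\gamma([b_\mu c])]$. On the other hand, expanding $[a_\lambda[b_\mu c]]$ by the Jacobi identity $(\mathrm{C3})_\lambda$ as $[[a_\lambda b]_{\lambda+\mu}c]+(-1)^{|a||b|}[b_\mu[a_\lambda c]]$ and re-applying the amplified identity to each summand produces a second expression for $f'_\gamma([a_\lambda[b_\mu c]])$. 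Equating the two expressions and cancelling $2(-1)^{\theta|a|}$ leaves
\[
[a_\lambda f_\gamma([b_\mu c])]=(-1)^{\theta|b|}[[a_\lambda b]_{\lambda+\mu}f_\gamma(c)]+(-1)^{|a||b|+\theta|a|+\theta|b|}[b_\mu f_\gamma([a_\lambda c])].\quad(\ast)
\]

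To finish, I rewrite the first summand of $(\ast)$. One application of Jacobi gives $[[a_\lambda b]_{\lambda+\mu}f_\gamma(c)]=[a_\lambda[b_\mu f_\gamma(c)]]-(-1)^{|a||b|}[b_\mu[a_\lambda f_\gamma(c)]]$; the quasicentroid relation turns $[a_\lambda f_\gamma(c)]$ into $(-1)^{\theta|a|}[(f_\gamma(a))_{\gamma+\lambda}c]$, and then one more round of Jacobi together with quasicentroid applied to each bracket that has $f_\gamma$ on a side collapses the entire right-hand side, after sign simplification, to
\[
2[a_\lambda[b_\mu f_\gamma(c)]]-(-1)^{|a||b|+\theta|a|}[b_\mu f_\gamma([a_\lambda c])]-(-1)^{\theta|b|}[a_\lambda f_\gamma([b_\mu c])].
\]
Substituting this back into $(\ast)$, the two $[b_\mu f_\gamma([a_\lambda c])]$ contributions cancel (their signs coincide once reduced modulo $2$), and the residue is the self-referential relation $2[a_\lambda f_\gamma([b_\mu c])]=2(-1)^{\theta|b|}[a_\lambda[b_\mu f_\gamma(c)]]$, from which the first equality drops out on dividing by $2$.

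The principal obstacle is the $\mathbb{Z}_2$-graded sign bookkeeping: every use of Jacobi or quasicentroid injects factors of the form $(-1)^{\theta|\cdot|}$ or $(-1)^{|\cdot||\cdot|}$, and the final cancellation works only because these signs align modulo $2$ after the three rounds of rewriting. A secondary technicality is the careful tracking of the conformal parameters, for which the equivalences in Lemma~\ref{lemm2-16} together with the sesquilinearity axiom $(\mathrm{C1})_\lambda$ are invoked tacitly to realign the $\lambda$-variables after each substitution.
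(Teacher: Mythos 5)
Your proof is correct, and I checked the two computational claims that carry all the weight: the identity $(\ast)$ does follow from evaluating $f'_\gamma([a_\lambda[b_\mu c]])$ in the two ways you describe, and the collapse of $[[a_\lambda b]_{\lambda+\mu}(f_\gamma(c))]$ to $2[a_\lambda[b_\mu (f_\gamma(c))]]-(-1)^{|a||b|+\theta|a|}[b_\mu (f_\gamma([a_\lambda c]))]-(-1)^{\theta|b|}[a_\lambda (f_\gamma([b_\mu c]))]$ is exactly right (two rounds of Jacobi, with the quasicentroid relation used to flip $f_\gamma$ across each bracket), after which the cross terms cancel and division by $2$ gives the first equality. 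Your starting point is the same as the paper's: both of you extract from $f\in\QC(\R)_\theta\cap\QDer(\R)_\theta$ the amplified identity $f'_\gamma([x_\nu y])=2[(f_\gamma(x))_{\gamma+\nu}y]=2(-1)^{\theta|x|}[x_\nu(f_\gamma(y))]$ (the paper's Eq.~\eqref{5-18}, with $\varphi=f'/2$). From there the routes diverge in organization. The paper turns the basic relation~\eqref{5-20} into three copies by cyclically permuting $a,b,c$ via the substitutions $\mu\mapsto-\partial-\lambda-\gamma-\mu'$ and $\lambda\mapsto-\partial-\lambda'-\gamma-\mu'$ through $(\mathrm{C1})_\lambda$ and $(\mathrm{C2})_\lambda$, sums them to get the vanishing cyclic sum~\eqref{5-25}, and then subtracts~\eqref{5-20} to isolate~\eqref{5-26}; your argument avoids the symmetrization entirely by expanding one distinguished term to second order and letting the self-referential cancellation do the work. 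The two proofs are comparable in length; yours trades the paper's delicate $-\partial$-parameter bookkeeping for a heavier but more mechanical sign computation, and arguably makes it clearer \emph{why} the result holds (the coefficient $2$ on $[a_\lambda[b_\mu f_\gamma(c)]]$ is where both hypotheses get used). One small point of hygiene common to both proofs and worth making explicit: every application of the amplified or quasicentroid identity to a summand like $[[a_\lambda b]_{\lambda+\mu}c]$ uses the $\mathbb{C}[\lambda,\mu]$-linear extension of an identity stated for elements of $\R$, together with the freedom to specialize the free conformal parameter to $\lambda+\mu$; you acknowledge this tacitly via Lemma~\ref{lemm2-16} and $(\mathrm{C1})_\lambda$, which is acceptable but should be said once.
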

\begin{proof}
Let $f'=2\varphi$. Since $f\in\QC(\R)_\theta \cap\QDer(\R)_\theta$ and $f,f'$ satisfy Eq.\eqref{5-2}, we get
\begin{eqnarray}
&&[(f_\gamma(a))_{\lambda+\gamma}b]=(-1)^{\theta|a|}[a_\lambda(f_\gamma(b))],\nonumber\\
&&\varphi_\gamma([a_\lambda b])=\frac{1}{2}f'_\gamma([a_\lambda b])=\frac{1}{2}([(f_\gamma(a))_{\lambda+\gamma}b]+(-1)^{\theta|a|}[a_\lambda(f_\gamma(b))])\nonumber\\
&=&[(f_\gamma(a))_{\lambda+\gamma}b]=(-1)^{\theta|a|}[a_\lambda(f_\gamma(b))].\label{5-18}
\end{eqnarray}
By Eq.\eqref{5-18}, we have
\begin{eqnarray}
[(\varphi_\gamma([a_\lambda b]))_{\lambda+\mu+\gamma}c]
&=&[[(f_\gamma(a))_{\lambda+\gamma}b]_{\lambda+\mu+\gamma}c]\nonumber\\
&=&[(f_\gamma(a))_{\lambda+\gamma}[b_\mu c]]-(-1)^{(\theta+|a|)||b|}[b_\mu[(f_\gamma(a))_{\lambda+\gamma}c]]\nonumber\\
&=&[(f_\gamma(a))_{\lambda+\gamma}[b_\mu c]]+(-1)^{|b||c|}[[(f_\gamma(a))_{\lambda+\gamma}c]_{-\partial-\mu}b]\nonumber\\
&=&\varphi_\gamma([a_\lambda[b_\mu c]])+(-1)^{|b||c|}[(\varphi_\gamma([a_\lambda c])_{-\partial-\mu}b]\nonumber\\
&=&\varphi_\gamma([a_\lambda[b_\mu c]])-(-1)^{(|a|+|b|)|c|}[(\varphi_\gamma([c_{-\partial-\lambda}a])_{-\partial-\mu}b],\label{5-19}
\end{eqnarray}
it is equivalent to
\begin{eqnarray}
(-1)^{|a||c|}[(\varphi_\gamma([a_\lambda b]))_{\lambda+\mu+\gamma}c]+(-1)^{|b||c|}[(\varphi_\gamma([c_{-\partial-\lambda}a])_{-\partial-\mu}b]
=(-1)^{|a||c|}\varphi_\gamma([a_\lambda[b_\mu c]]).\label{5-20}
\end{eqnarray}

On the one hand, by $(\rm C1)_\lambda$ and setting $\mu=-\partial-\lambda-\gamma-\mu'$ in Eq.\eqref{5-20}, we obtain
\begin{eqnarray*}
(-1)^{|a||c|}[(\varphi_\gamma([a_\lambda b]))_{-\partial-\mu'}c]+(-1)^{|b||c|}[\varphi_\gamma([c_{-\partial-\lambda}a])_{\lambda+\gamma+\mu'}b]
=(-1)^{|a||c|}\varphi_\gamma([a_\lambda[b_{-\partial-\mu'}c]]),
\end{eqnarray*}
it is equivalent to
\begin{eqnarray}
(-1)^{|a||c|}[\varphi_\gamma([a_\lambda b])_{-\partial-\mu'}c]+(-1)^{|b||c|}[\varphi_\gamma([c_{\mu'}a])_{\lambda+\gamma+\mu'}b]
=(-1)^{|a||c|}\varphi_\gamma([a_\lambda[b_{-\partial-\mu'}c]]).\label{5-21}
\end{eqnarray}
By $(\rm C1)_\lambda$ and setting $\lambda=-\partial-\lambda'-\gamma-\mu'$ in Eq.\eqref{5-21}, we get
\begin{eqnarray}
(-1)^{|a||c|}[\varphi_\gamma([a_{-\partial-\lambda'}b])_{-\partial-\mu'}c]+(-1)^{|b||c|}[\varphi_\gamma([c_{\mu'}a])_{-\partial-\lambda'}b]\nonumber\\
=(-1)^{|a||c|}\varphi_\gamma([a_{-\partial-\lambda'-\mu'}[b_{-\partial-\mu'}c]]).\label{5-22}
\end{eqnarray}

On the other hand, replacing $\mu, \lambda$ by $\lambda, \mu$ in Eq.\eqref{5-20} give
\begin{eqnarray}
(-1)^{|a||c|}[(\varphi_\gamma([a_\mu b]))_{\lambda+\mu+\gamma}c]+(-1)^{|b||c|}[(\varphi_\gamma([c_{-\partial-\mu}a])_{-\partial-\lambda}b]
=(-1)^{|a||c|}\varphi_\gamma([a_\mu[b_\lambda c]]).\label{5-23}
\end{eqnarray}
By $(\rm C1)_\lambda$ and setting $\lambda=-\partial-\lambda'-\gamma-\mu$ in Eq.\eqref{5-23}, we get
\begin{eqnarray*}
(-1)^{|a||c|}[(\varphi_\gamma([a_\mu b]))_{-\partial-\lambda'}c]+(-1)^{|b||c|}[(\varphi_\gamma([c_{-\partial-\mu}a])_{\lambda'+\gamma+\mu}b]
=(-1)^{|a||c|}\varphi_\gamma([a_\mu[b_{-\partial-\lambda'}c]]),
\end{eqnarray*}
it is equivalent to
\begin{eqnarray}
(-1)^{|a||c|}[(\varphi_\gamma([a_\mu b]))_{-\partial-\lambda'}c]+(-1)^{|b||c|}[(\varphi_\gamma([c_{\lambda'}a])_{\lambda'+\gamma+\mu}b]
=(-1)^{|a||c|}\varphi_\gamma([a_\mu[b_{-\partial-\lambda'}c]]).\label{5-24}
\end{eqnarray}

By Eqs.\eqref{5-20}, \eqref{5-22} and \eqref{5-24}, we have
\begin{eqnarray*}
&&2((-1)^{|a||c|}[(\varphi_\gamma([a_\lambda b]))_{\lambda+\mu+\gamma}c]+(-1)^{|b||c|}[(\varphi_\gamma([c_{-\partial-\lambda}a])_{-\partial-\mu}b]\\
&&+(-1)^{|a||b|}[(\varphi_\gamma([b_\mu c]))_{-\partial-\lambda}a])\\
&=&(-1)^{|a||c|}[(\varphi_\gamma([a_\lambda b]))_{\lambda+\mu+\gamma}c]+(-1)^{|b||c|}[(\varphi_\gamma([c_{-\partial-\lambda}a])_{-\partial-\mu}b]\\
&&+(-1)^{|b||c|}[(\varphi_\gamma([c_{-\partial-\lambda}a])_{-\partial-\mu}b]+(-1)^{|a||b|}[(\varphi_\gamma([b_\mu c]))_{-\partial-\lambda}a]\nonumber\\
&&+(-1)^{|a||b|}[(\varphi_\gamma([b_\mu c]))_{-\partial-\lambda}a]+(-1)^{|a||c|}[(\varphi_\gamma([a_\lambda b]))_{\lambda+\mu+\gamma}c]\\
&=&(-1)^{|a||c|}\varphi_\gamma([a_\lambda[b_\mu c]])+(-1)^{|b||c|}\varphi_\gamma([c_{-\partial-\lambda-\mu}[a_{-\partial-\mu}b]])
+(-1)^{|a||b|}\varphi_\gamma([b_\mu[c_{-\partial-\lambda}a]])\\
&=&(-1)^{|a||c|}\varphi_\gamma([a_\lambda[b_\mu c]])+(-1)^{|b||c|}\varphi_\gamma([c_{-\partial-\lambda-\mu}[a_{\lambda}b]])
+(-1)^{|a||b|}\varphi_\gamma([b_\mu[c_{-\partial-\lambda}a]])\\
&=&0,
\end{eqnarray*}so
\begin{eqnarray}
(-1)^{|a||c|}[(\varphi_\gamma([a_\lambda b]))_{\lambda+\mu+\gamma}c]+(-1)^{|b||c|}[(\varphi_\gamma([c_{-\partial-\lambda}a])_{-\partial-\mu}b]\nonumber\\
+(-1)^{|a||b|}[(\varphi_\gamma([b_\mu c]))_{-\partial-\lambda}a]=0. \label{5-25}
\end{eqnarray}

Combining Eqs.\eqref{5-20} with \eqref{5-25}, we get
\begin{eqnarray*}
(-1)^{|a||c|}\varphi_\gamma([a_\lambda[b_\mu c]])=-(-1)^{|a||b|}[(\varphi_\gamma([b_\mu c]))_{-\partial-\lambda}a]=(-1)^{|a||c|+\theta|a|}[a_\lambda(\varphi_\gamma([b_\mu c]))],
\end{eqnarray*}
that is
\begin{eqnarray}
\varphi_\gamma([a_\lambda[b_\mu c]])=(-1)^{\theta|a|}[a_\lambda(\varphi_\gamma([b_\mu c]))].\label{5-26}
\end{eqnarray}
By Eqs.\eqref{5-26} and \eqref{5-18}, we obtain
\begin{eqnarray*}
[a_\lambda(f_\gamma([b_\mu c]))]=(-1)^{\theta|a|}\varphi_\gamma([a_\lambda[b_\mu c]]=[a_\lambda(\varphi_\gamma([b_\mu c]))]\nonumber\\
=[a_\lambda[(f_\gamma(b))_{\mu+\gamma}c]]=(-1)^{\theta|b|}[a_\lambda[b_\mu(f_\gamma(c))]].
\end{eqnarray*}
This shows the lemma.
\end{proof}
\begin{lem}\label{lemm5-7}
Let $\R$ be a Lie conformal superalgebra. If $f\in\QC(\R)_\theta\cap\QDer(\R)_\theta$ and $\Z(\R)=0$, then $f\in\C(\R)_\theta$.
\end{lem}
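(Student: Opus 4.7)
The plan is to bootstrap Lemma 5.6 into the desired equalities by using the hypothesis $\Z(\R)=0$ as a cancellation principle. Lemma 5.6 asserts that for $f\in\QC(\R)_\theta\cap\QDer(\R)_\theta$ one has
\begin{eqnarray*}
[a_\lambda(f_\gamma([b_\mu c]))]=[a_\lambda[(f_\gamma(b))_{\mu+\gamma}c]]=(-1)^{\theta|b|}[a_\lambda[b_\mu(f_\gamma(c))]]
\end{eqnarray*}
for all $a,b,c\in\R$. Pairing the outer factor with an arbitrary $a$ is exactly what is needed to apply the triviality of the center.

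First I would rewrite the first equality from Lemma 5.6 by $\mathbb{C}$-linearity as
\begin{eqnarray*}
[a_\lambda\bigl(f_\gamma([b_\mu c])-[(f_\gamma(b))_{\mu+\gamma}c]\bigr)]=0, \qquad \forall a\in\R.
\end{eqnarray*}
Viewing $x:=f_\gamma([b_\mu c])-[(f_\gamma(b))_{\mu+\gamma}c]$ as a polynomial in $\mu,\gamma$ with coefficients in $\R$, the condition $[a_\lambda x]=0$ for every $a\in\R$ and every formal variable $\lambda$ forces each coefficient of $x$ (as a polynomial in $\mu,\gamma$) to lie in $\Z(\R)$. Since $\Z(\R)=0$ by hypothesis, every such coefficient vanishes, hence $x=0$, giving $f_\gamma([b_\mu c])=[(f_\gamma(b))_{\mu+\gamma}c]$. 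Repeating this argument verbatim with the second equality from Lemma 5.6 yields $f_\gamma([b_\mu c])=(-1)^{\theta|b|}[b_\mu(f_\gamma(c))]$.

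Finally, the two identities just produced, combined with the quasicentroid defining property $[(f_\gamma(b))_{\mu+\gamma}c]=(-1)^{\theta|b|}[b_\mu(f_\gamma(c))]$ which $f\in\QC(\R)_\theta$ supplies, give the triple equality
\begin{eqnarray*}
f_\gamma([b_\mu c])=[(f_\gamma(b))_{\mu+\gamma}c]=(-1)^{\theta|b|}[b_\mu(f_\gamma(c))],
\end{eqnarray*}
which is precisely the definition of $f\in\C(\R)_\theta$.

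The only step requiring care is the passage from ``$[a_\lambda x]=0$ for all $a\in\R$'' to ``$x=0$''. This is where the hypothesis $\Z(\R)=0$ is used, and it must be applied coefficient-wise in the polynomial variables $\mu,\gamma$; aside from this bookkeeping, all the substantive work has been done in Lemma 5.6, so I do not anticipate any genuine obstacle.
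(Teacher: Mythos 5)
Your proposal is correct and is exactly the argument the paper intends: the paper's own proof of this lemma is the single line ``It is straightforward by Lemma \ref{lemm5-6},'' and what you have written is precisely the straightforward deduction being alluded to --- strip the outer bracket $[a_\lambda(\cdot)]$ from the identities of Lemma \ref{lemm5-6} by noting that an element annihilated by all $[a_\lambda \cdot]$ lies in $\Z(\R)=0$ (applied coefficient-wise in $\mu,\gamma$), then combine with the quasicentroid identity to recover Eq.~\eqref{5-3}. No discrepancy with the paper; you have merely supplied the details it omits.
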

\begin{proof}
It is straightforward by Lemma \ref{lemm5-6}.
\end{proof}

\begin{thm}
Let $\R$ be a Lie conformal superalgebra. If $\Z(\R)=0$, then $\QC(\R)\cap\QDer(\R)=\C(\R)$.
\end{thm}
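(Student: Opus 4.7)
The plan is to prove the equality by a two-sided inclusion, where the nontrivial direction is essentially already the content of Lemma \ref{lemm5-7}, so most of my work will consist of justifying Lemma \ref{lemm5-7} from Lemma \ref{lemm5-6} (the proof of which the authors call ``straightforward''), and then splicing everything together.

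For the inclusion $\C(\R)\subseteq\QC(\R)\cap\QDer(\R)$, I would simply quote the tower \eqref{tower}: a centroid $f$ trivially satisfies the quasicentroid identity \eqref{5-4} (it is precisely one half of the defining relation \eqref{5-3}), and adding the two halves of \eqref{5-3} shows that $f$ is a quasiderivation with auxiliary map $f'=2f$ in \eqref{5-2}. No hypothesis on $\Z(\R)$ is needed here.

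For the reverse inclusion $\QC(\R)\cap\QDer(\R)\subseteq\C(\R)$, since all the spaces involved are $\mathbb{Z}_2$-graded (they are defined componentwise in each degree $\theta$), it suffices to treat a homogeneous element $f\in\QC(\R)_\theta\cap\QDer(\R)_\theta$ and show $f\in\C(\R)_\theta$; the general case follows by splitting $f=f_{\bar0}+f_{\bar1}$ and handling each summand. Fix such a homogeneous $f$. Lemma \ref{lemm5-6} gives, for all $a,b,c\in\R$,
\begin{equation*}
[a_\lambda(f_\gamma([b_\mu c]))]\;=\;[a_\lambda[(f_\gamma(b))_{\mu+\gamma}c]]\;=\;(-1)^{\theta|b|}[a_\lambda[b_\mu(f_\gamma(c))]],
\end{equation*}
which I rewrite as $[a_\lambda X]=0$ and $[a_\lambda Y]=0$ for $X:=f_\gamma([b_\mu c])-[(f_\gamma(b))_{\mu+\gamma}c]$ and $Y:=f_\gamma([b_\mu c])-(-1)^{\theta|b|}[b_\mu(f_\gamma(c))]$, valid for every $a\in\R$ and polynomially in the parameters $\lambda,\mu,\gamma$.

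The key conceptual step is to conclude from ``$[a_\lambda X]=0$ for every $a$'' that the coefficients of $X$ (viewed as a polynomial in $\mu,\gamma$ with values in $\R$) lie in the center $\Z(\R)$, and hence $X=0$ once the hypothesis $\Z(\R)=0$ is imposed; the same argument gives $Y=0$. This yields $f_\gamma([b_\mu c])=[(f_\gamma(b))_{\mu+\gamma}c]=(-1)^{\theta|b|}[b_\mu(f_\gamma(c))]$, which is exactly \eqref{5-3}, so $f\in\C(\R)_\theta$. This proves Lemma \ref{lemm5-7}, and combining the two inclusions proves the theorem. The main obstacle is precisely this passage from ``kills every $a$ under the $\lambda$-bracket'' to ``lies in the center, hence vanishes,'' which requires a modest polynomial-coefficient argument in $\lambda$; once this is in place, both Lemma \ref{lemm5-7} and the theorem are essentially a bookkeeping exercise.
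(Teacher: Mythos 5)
Your proposal is correct and follows the paper's own route exactly: the inclusion $\C(\R)\subseteq\QC(\R)\cap\QDer(\R)$ from the containment tower, and the reverse inclusion via Lemma \ref{lemm5-7}, which the paper derives from Lemma \ref{lemm5-6} with the same (unwritten) argument you spell out — namely that $[a_\lambda X]=0$ for all $a$ forces the polynomial coefficients of $X$ into $\Z(\R)=0$, so $f_\gamma([b_\mu c])=[(f_\gamma(b))_{\mu+\gamma}c]$, which together with the quasicentroid identity is precisely \eqref{5-3}. The only difference is that you make explicit the step the paper dismisses as ``straightforward.''
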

\begin{proof}
$\C(\R)\subseteq\QC(\R)\cap\QDer(\R)$ is obvious. By Lemma \ref{lemm5-7}, if $\Z(\R)=0$, we have
$\QC(\R)_\theta\cap\QDer(\R)_\theta\subseteq\C(\R)_\theta$. So $\QC(\R)\cap\QDer(\R)=\C(\R)$.
\end{proof}

\begin{prop}
Let $\R$ be a Lie conformal superalgebra. If $\Z(\R)=0$, then $\QC(\R)$ is a Lie conformal superalgebra if and  only if $[\QC(\R)_\lambda\QC(\R)]=0$.
\end{prop}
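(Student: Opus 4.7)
The ``if'' direction is immediate: if $[\QC(\R)_\lambda\QC(\R)]=0$, then $\QC(\R)$, being stable under $\partial$ as a $\mathbb{Z}_{2}$-graded submodule of $\Cend(\R)$, carries the zero $\lambda$-bracket, which trivially satisfies axioms $(\mathrm{C1})_\lambda$--$(\mathrm{C3})_\lambda$.

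For the converse, assume $\QC(\R)$ is closed under the $\lambda$-bracket of $gc(\R)$. Fix homogeneous $f\in\QC(\R)_\sigma$ and $g\in\QC(\R)_\vartheta$, and set
$h_\mu := [f_\lambda g]_\mu = f_\lambda g_{\mu-\lambda} - (-1)^{\sigma\vartheta}\,g_{\mu-\lambda}f_\lambda$;
by hypothesis $h\in\QC(\R)_{\sigma+\vartheta}[\lambda]$. The core of the argument is a direct sign-chase. I will expand $h_\mu(a)$ into its two summands and compute $[h_\mu(a)_{\mu+\nu}b]$ by applying the quasicentroid identity \eqref{5-4} twice to each summand (once to move $f$, once to move $g$ across the outer bracket onto $b$). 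After collecting the resulting signs, the two terms recombine---precisely because of the $(-1)^{\sigma\vartheta}$ built into the bracket formula for $h$---into the identity
\[
[h_\mu(a)_{\mu+\nu}b] \;=\; -(-1)^{(\sigma+\vartheta)|a|}\,[a_\nu\,h_\mu(b)].
\]

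Now apply \eqref{5-4} directly to $h$ itself, using $h\in\QC(\R)_{\sigma+\vartheta}$: this reads $[h_\mu(a)_{\mu+\nu}b]=+(-1)^{(\sigma+\vartheta)|a|}[a_\nu h_\mu(b)]$. Comparing the two expressions forces $2[a_\nu h_\mu(b)]=0$, hence $[a_\nu h_\mu(b)]=0$ for every $a\in\R$ and every $\nu$. Consequently $h_\mu(b)\in\Z(\R)$; the hypothesis $\Z(\R)=0$ then yields $h_\mu(b)=0$ for all $b$ and all $\mu$, i.e.\ $[f_\lambda g]=0$. Since $f,g\in\QC(\R)$ were arbitrary, $[\QC(\R)_\lambda\QC(\R)]=0$.

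The main obstacle is the sign-chase in the second paragraph; the proposition works because this bookkeeping produces exactly the opposite sign to the one forced by $h\in\QC(\R)$, so consistency is available only when the bracket lands inside $\Z(\R)$. One could alternatively route through the structural inclusions $[\QC(\R)_\lambda\QC(\R)]\subseteq \QDer(\R)[\lambda]\cap\QC(\R)[\lambda] = \C(\R)[\lambda]$, using Lemma~\ref{lemm5-1}(3) together with the preceding theorem $\QC(\R)\cap\QDer(\R)=\C(\R)$, but the sign-chase already delivers $h_\mu(b)\in\Z(\R)$ directly, so the detour through $\C(\R)$ is not strictly needed.
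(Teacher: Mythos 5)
Your proof is correct and takes essentially the same route as the paper: you compute $[(h_\mu(a))_{\mu+\nu}b]$ for $h=[f_\lambda g]$ once directly from the hypothesis $h\in\QC(\R)[\lambda]$ and once by expanding $h$ and applying the quasicentroid identity \eqref{5-4} twice to each summand, and the resulting opposite signs force the bracket to land in $\Z(\R)=0$ (your claimed sign, $-(-1)^{(\sigma+\vartheta)|a|}$, matches the paper's Eq.~\eqref{5-16}). One small caveat on your closing aside: the alternative route via $[\QC(\R)_\lambda\QC(\R)]\subseteq\C(\R)[\lambda]$ would not by itself complete the proof, since elements of $\C(\R)$ need not vanish, but you do not rely on it.
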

\begin{proof}
$(\Rightarrow)$  Suppose that $\QC(\R)$ is a Lie conformal superalgebra.
For $f\in\QC(\R)_\theta,g\in\QC(\R)_\vartheta$, $[f_\lambda g]\in\QC(\R)_{\theta+\vartheta}[\lambda]$. For $a,b\in \R$, by Eq.\eqref{5-4}, we have
\begin{eqnarray}\label{5-15}[([f_\lambda g]_\theta(a))_{\mu+\theta}b]
=(-1)^{(\theta+\vartheta)|a|}[a_\mu([f_\lambda g]_\theta(b))].\end{eqnarray}
By Eqs.\eqref{def2-5} and \eqref{5-4}, we obtain
\begin{eqnarray}\label{5-16}
&&[([f_\lambda g]_\theta(a))_{\mu+\theta}b]\nonumber\\
&=&[(f_\lambda(g_{\theta-\lambda}(a)))_{\mu+\theta}b]-(-1)^{\theta\vartheta}[(g_{\theta-\lambda}(f_\lambda(a)))_{\mu+\theta}b]\nonumber\\
&=&(-1)^{\theta(\vartheta+|a|)}[(g_{\theta-\lambda}(a))_{\mu+\theta-\lambda}(f_\lambda(b))]
-(-1)^{\theta\vartheta+\vartheta(\theta+|a|)}[(f_\lambda(a))_{\lambda+\mu}(g_{\theta-\lambda}(b))]\nonumber\\
&=&(-1)^{\theta(\vartheta+|a|)+\vartheta|a|}[a_\mu(g_{\theta-\lambda}(f_\lambda(b)))]-(-1)^{\theta\vartheta+\vartheta(\theta+|a|)+\theta|a|}[a_\lambda(f_\lambda(g_{\theta-\lambda}(b)))]\nonumber\\
&=&-(-1)^{(\vartheta+\theta)|a|}[a_\mu([f_\lambda g]_\theta(b))].
\end{eqnarray}
Combining Eqs.\eqref{5-15} and \eqref{5-16} gives
$$[([f_\lambda g]_\theta(a))_{\mu+\theta}b]=0,$$
and thus $[f_\lambda g]_\theta(a)\in\Z(\R)[\lambda]=0$($\Z(\R)=0$).
Therefore, $[f_\lambda g]=0$.

$(\Leftarrow)$ It is clear.
\end{proof}

\end{document}